\documentclass[11pt]{article}

\usepackage[top=50mm, bottom=50mm, left=50mm, right=50mm]{geometry}
\usepackage{lineno}
\usepackage{amssymb}
\usepackage{amsmath}
\usepackage{amsthm}
\usepackage{amsfonts}
\usepackage{epsfig}
\usepackage{graphicx}
\usepackage{graphics}
\usepackage{float}
\usepackage{multirow}
\usepackage{lineno}
\usepackage{fullpage}
\usepackage[normalem]{ulem} 
\usepackage{makeidx}
\usepackage{xspace}
\usepackage{wrapfig}
\usepackage{color}

\makeindex

\theoremstyle{definition}
\newtheorem{theorem}{Theorem}
\newtheorem{defi}{Definition}

\newtheorem{lem}{Lemma}

\numberwithin{defi}{section} 
\numberwithin{theorem}{section}  
\numberwithin{lem}{section}
\numberwithin{equation}{section} 
\newtheorem*{asm}{Assumption}

\newcommand{\Pro}{{\mathbf{P}}}

\newcommand{\Pas}{{\Pro\text{-a.s.}}}
\newcommand{\Fi}{{(\mathcal{F}_t)_{t\ge0}}}

\allowdisplaybreaks[4]

\begin{document}

\title{Global solutions of stochastic nonlinear Schr\"odinger system with quadratic interaction}
 
\author{
{Masaru Hamano\footnote{Faculty of Science and Engineering, Waseda University, Tokyo 169-8555, Japan, email: m.hamano3@kurenai.waseda.jp}} \and {Shunya Hashimoto\footnote{Department of Mathematics, Faculty of Science, Saitama University, Saitama 338-8570, Japan, email: s.hashimoto.230@ms.saitama-u.ac.jp}}  \and {Shuji Machihara\footnote{Department of Mathematics, Faculty of Science, Saitama University, Saitama 338-8570, Japan, email: machihara@rimath.saitama-u.ac.jp}}
}

\date{}

\maketitle

\begin{abstract}
The time-global existence of solutions to a system of stochastic Schr\"odinger equations with multiplicative noise and the quadratic nonlinear terms are discussed in this paper.
The same system in the deterministic treatment was studied in \cite{HOT13} where the mass and energy are conserved. In our stochastic situation, those are not conserved and which causes several difficulties in the arguments of composing a-priori estimate.
\end{abstract}

\section{Preliminaries}
\subsection{Introduction}

We consider the Cauchy problem for the stochastic nonlinear Schr\"odinger system (SNLSS) with multiplicative noise in the general spatial dimension $d\in\mathbb{N}$:
\begin{eqnarray}
\label{SNLSS}
\begin{cases}
idu(t,\xi)=\frac{1}{2\ell}\Delta u(t,\xi)dt+\lambda v(t,\xi)\overline{u(t,\xi)}dt \\
\hspace{5em} -i\mu(\xi)u(t,\xi)dt+iu(t,\xi)dW(t,\xi), \quad t\in(0,T), \ \xi\in \mathbb{R}^d, \\
idv(t,\xi)=\frac{1}{2L}\Delta v(t,\xi)dt+\kappa u^2(t,\xi)dt \\
\hspace{5em} -i\mu(\xi)v(t,\xi)dt+iv(t,\xi)dW(t,\xi), \quad t\in(0,T), \ \xi\in \mathbb{R}^d, \\
u(0,\xi)=u_0(\xi), \quad v(0,\xi)=v_0(\xi), \quad \xi\in\mathbb{R}^d,
\end{cases}
\end{eqnarray}
where $\ell,L>0, \ \lambda,\kappa\in\mathbb{C}.$ For $\{ \mu_j\}_{j=1}^N\subset \mathbb{C}$, the Wiener process $W(t,\xi)$ and the function $\mu$ are given by
\begin{eqnarray}
W(t,\xi)=\sum_{j=1}^N\mu_je_j(\xi)\beta_j(t), \quad t\ge0, \ \xi\in\mathbb{R}^d, \\
\mu(\xi)=\frac{1}{2}\sum_{j=1}^N|\mu_j|^2e_j^2(\xi), \quad \xi\in\mathbb{R}^d.
\end{eqnarray}
Here, $N\in \mathbb{N}\cup \{ +\infty \}$ and the 
$e_j(\xi)$ are real-valued functions.  
The $\beta_j(t)$ are real-valued independent Brownian motions with respect to a 
probability space $(\Omega,\mathcal{F},\Pro)$ with natural filtration 
$(\mathcal{F}_t)_{t\ge0}, \ 1\le j\le N$. In this paper, we assume $N<\infty$ which is the same setting with \cite{BRZ14,BRZ16}. But our techniques easily go over to the case where $N=+\infty$ (i.e. infinite dimensional noise).
We refer to \cite[Remark 2.3.13]{Z14} for details. 

In physics, the nonlinear Schr\"odinger system is an important model, appearing in many physics fields, notably Bose-Einstein condensation. The nonlinear Schr\"odinger system can describe the propagation of wave functions with interactions between two components, as well as the spin and motion of particles (see \cite{BC05,BCGMW97}). The nonlinear Schr\"odinger system is also an important model for nonlinear optics (see \cite{AA99}). In many cases, spatial and temporal fluctuations of the parameters of the medium must be taken into account. This is often caused by random potentials or describes the propagation of dispersive waves in a non-homogeneous or random medium. In such cases, the stochastic Schr\"odinger system is introduced. For more physical interpretations, see \cite{BCRG94,BCRG95,BG09,BH95,BPZ98,FM13,H96} and references therein.

The deterministic case (i.e. $\mu_j=0, \ 1\le j\le N$) was studied by N. Hayashi, T. Ozawa and  K. Tanaka \cite{HOT13}. 
They consider the Cauchy problem for the nonlinear Schr\"odinger system:
\begin{align*}
\begin{cases}
i\partial_tu(t,\xi)+\frac{1}{2\ell}\Delta u(t,\xi)=\lambda v(t,\xi)\overline{u(t,\xi)} \quad (t,x)\in\mathbb{R}\times\mathbb{R}^d, \\
i\partial_tv(t,\xi)+\frac{1}{2L}\Delta v(t,\xi)=\kappa v^2(t,\xi) \quad (t,x)\in\mathbb{R}\times\mathbb{R}^d, \\
u(0,\xi)=u_0(\xi), \quad v(0,\xi)=v_0(\xi), \quad \xi\in\mathbb{R}^d.
\end{cases}
\end{align*}
Under assumption \eqref{49}, the mass and energy in this equation are given by
\begin{align*}
\textbf{mass:} \quad Q(u,v):=& \ ||u||^2_{L^2}+c||v||^2_{L^2}, \\
\textbf{energy:} \quad E(u,v):=& \ K(u,v)+\lambda P(u,v),
\end{align*}
where
\begin{align*}
K(u,v):=& \ \frac{1}{2\ell}||\nabla u||^2_{L^2}+\frac{c}{4L}||\nabla v||^2_{L^2}, \\
P(u,v):=& \ \text{Re}\int_{\mathbb{R}^d} u^2\overline{v} d\xi.
\end{align*}
In \cite{HOT13}, they show the existence of $L^2$-global solutions for $1\le d\le 3$ and the existence of $H^1$-global solutions for $1\le d\le 4$ (with an additional assumption when $d=4$). 

In the single stochastic nonlinear Schr\"odinger equation, V. Barbu, M. R$\ddot{\text{o}}$ckner and D. Zhang \cite{BRZ14,BRZ16} show the well-posedness of $L^2$ and $H^1$ in the same range of exponents as in the deterministic case, using the rescaling transformation introduced in V. Barbu, G. Da Prato and M. R$\ddot{\text{o}}$ckner \cite{BDR09}. 
This transformation reduces the stochastic nonlinear Schr\"odinger equation to an equivalent random Schr\"odinger equation by assuming a decay condition ((H1$)_s$ below) on the noise. 
For more details, see \cite{BRZ17} and references therein.

Also, the stochastic nonlinear Schr\"odinger system with general power-type nonlinear terms with exponent $2\sigma+1$ was studied by Y. Chen, J. Duan and Q. Zhang \cite{CDZ20}.
Namely,
\begin{align*}
\begin{cases}
idu+(\Delta u+(\lambda_{11}|u|^{2\sigma}+\lambda_{12}|v|^{\sigma+1}|u|^{\sigma-1})u)dt=u\circ \phi_1dW(t), \\
idv+(\Delta v+(\lambda_{21}|v|^{\sigma-1}|u|^{\sigma+1}+\lambda_{22}|u|^{2\sigma})v)dt=v\circ \phi_2dW(t), \\
u(0,\xi)=u_0(\xi), \quad v(0,\xi)=v_0(\xi),
\end{cases}
\end{align*}
where the coefficients $\lambda_{ij}\in \mathbb{R}$ for $i,j=1,2, \ (W(t))_{t\ge0}$ is a cylindrical Wiener process in $L^2(\mathbb{R}^N)$ with filtration $(\mathcal{F}_t)_{t\ge0}$, the notation $\circ$ stands for Stratonovich product in the right-hand side, and $\phi_1, \phi_2$ are Hilbert-Schmidt operators from $L^2(\mathbb{R}^N)$ into $H^1(\mathbb{R}^N)$.
In \cite{CDZ20}, using the method by A. de Bouard and A. Debussche \cite{BD99,BD03}, they show the $H^1$- local well-posedness for $\sigma\in [0,\frac{2}{d})\cup (\frac{1}{2},\frac{2}{(d-2)^+})$, the $H^1$- global well-posedness for $\sigma\in [0,\frac{2}{d}]$ (with an additional assumption when $\sigma=\frac{2}{d}$) and the existence of blow-up solutions for $\sigma\in(\frac{2}{d},\frac{2}{(d-2)^+})$. \\


\textbf{Theorems in this paper} 

In this paper, we apply the resealing transformation. 
\begin{itemize}
\item The existence of an $L^2$-local solution in (\ref{SNLSS}) for $1\le d\le 4$ (Theorem \ref{main} below).
\item The existence of an $H^1$-local solution in (\ref{SNLSS}) for $1\le d\le 3$ (Theorem \ref{mainH} below).
\item The existence of an $L^2$-global solution in (\ref{SNLSS}) for $1\le d\le 3$ (Theorem \ref{th41} below).
\item The existence of an $H^1$-global solution in (\ref{SNLSS}) for $1\le d\le 3$ (Theorem \ref{th53} below).
\end{itemize}
The proof is based on \cite{BRZ14,BRZ16}.
One significant difference in the proofs of the stochastic Schr\"odinger equation and the stochastic Schr\"odinger system is the existence of interaction terms between $u$ and $v$.
This difference is a major impediment to showing the uniform boundedness of the $H^1$-global solution.
For this reason, we impose the additional assumption \eqref{H1add} to show the uniform boundedness of the $H^1$-global solution.
This assumption is well known in the study of the stochastic Schr\"odinger equation as a condition for the conservation of mass.
However, the existence of $H^1$-global solutions that do not require up to uniform boundedness can be proved without imposing the assumption \eqref{H1add} (Theorem \ref{th541} below).
This proof is used for the argument of the persistence of regularity. 

We introduce the definition of $H^s$-solution. We use it with $s=0$ or $1$. $H^s$-norm is the standard one, that is,
\[ \|u\|^2_{H^s}=\sum_{|\gamma|\le s}\|D^{\gamma}u\|^2_{L^2}, \]
where $\gamma$ is a multi-index and $D$ is a partial differential operator in $\xi$.
\begin{defi}[$H^s$-solution]
\label{uvdef}
Let $u_0,v_0$ belong to $H^s$. Fix $0<T<\infty$. We say that a triple $(u,v,\tau)$ is a $H^s$-solution of \textnormal{(\ref{SNLSS})}, where $\tau(\le T)$ is an $(\mathcal{F}_t)$-stopping time, and $u=(u(t))_{t\in [0,\tau]},v=(v(t))_{t\in [0,\tau]}$ is an $H^s$-valued continuous $(\mathcal{F}_t)$-adapted process, such that $v\overline{u},u^2\in L^1(0,\tau;H^{s-2})$, $\Pas$, and it satisfies $\Pas$
\begin{align}
\label{14}
u(t)&=u_0-\int_0^t(i\frac{1}{2\ell}\Delta u(r)+\mu u(r)+i\lambda v(r)\overline{u(r)})dr+\int_0^tu(r)dW(r), \quad t\in[0,\tau], \\
v(t)&=v_0-\int_0^t(i\frac{1}{2L}\Delta v(r)+\mu v(r)+i\kappa u^2(r))dr+\int_0^tv(r)dW(r), \quad t\in[0,\tau],
\end{align}
as an equation in $H^{s-2}$.
\end{defi}
We say that uniqueness for (\ref{SNLSS}) holds
in the function space $S$ with respect to variable $\xi$, if for any two $H^s$-solutions $(u_i,v_i,\tau_i), \ u_i,v_i\in S, \ i=1,2$, it holds $\Pas$ that $u_1=u_2,v_1=v_2 \ \text{on} \ [0,\tau_1\wedge \tau_2]$. 

To make the stochastic system with white noise (\ref{SNLSS}) into a random system without white noise, we consider the rescaling transformation
\begin{align}
\label{21}
u(t,\xi)=e^{W(t,\xi)}y(t,\xi), \\
\label{22}
v(t,\xi)=e^{W(t,\xi)}z(t,\xi).
\end{align}
They are a generalization of the Doss-Sussman type transformation in stochastic differential equations, called the method of rescaling (developed by \cite{BDR09}), to infinite dimensions.
Applying \eqref{21} and \eqref{22} to \eqref{SNLSS}, we get



\begin{eqnarray}
\label{RSNLSS}
\begin{cases}
\displaystyle \frac{\partial y(t,\xi)}{\partial t}=A_{\ell}(t)y(t,\xi)-i\lambda\overline{e^{W(t,\xi)}}z(t,\xi)\overline{y(t,\xi)}, \\
\displaystyle \frac{\partial z(t,\xi)}{\partial t}=A_L(t)z(t,\xi)-i\kappa e^{W(t,\xi)}y^2(t,\xi), \\
y(0,\xi)=u_0(\xi), \quad z(0,\xi)=v_0(\xi),
\end{cases}
\end{eqnarray}
where
\begin{align}
\label{EOl}
A_{\ell}(t)y(t,\xi)=&-i\frac{1}{2\ell}e^{-W}\Delta (e^Wy)-(\mu+\tilde{\mu})y \nonumber \\
=&-i(\frac{1}{2\ell}\Delta +b_{\ell}(t,\xi)\cdot \nabla +c_{\ell}(t,\xi))y(t,\xi), \\
\label{EOL}
A_{L}(t)z(t,\xi)=&-i\frac{1}{2L}e^{-W}\Delta (e^Wz)-(\mu+\tilde{\mu})z \nonumber \\
=&-i(\frac{1}{2L}\Delta +b_L(t,\xi)\cdot \nabla +c_L(t,\xi))z(t,\xi), \\
\tilde{\mu}(\xi)=&\frac{1}{2}\sum_{j=1}^N\mu^2_je_j^2(\xi), \\
\label{bsys}
b_{\ell}(t,\xi)=&\frac{1}{\ell}\nabla W(t,\xi), \\
b_{L}(t,\xi)=&\frac{1}{L}\nabla W(t,\xi), \\
\label{csys}
c_{\ell}(t,\xi)=&\frac{1}{2\ell}\sum_{j=1}^d(\partial_jW(t,\xi))^2+\frac{1}{2\ell}\Delta W(t,\xi)-i(\mu(\xi)+\tilde{\mu}(\xi)), \\
c_{L}(t,\xi)=&\frac{1}{2L}\sum_{j=1}^d(\partial_jW(t,\xi))^2+\frac{1}{2L}\Delta W(t,\xi)-i(\mu(\xi)+\tilde{\mu}(\xi)).
\end{align}
We note that the argument needs the following decay conditions on $(e_j)_{1\le j\le N}$.
\begin{asm}
\textbf{(H1$)_s$} \ Let $s\ge0$. Assume that $e_j \in C^{\infty}_b(\mathbb{R}^d)$ such that
\[ \lim_{|\xi|\to \infty}\zeta(\xi)|\partial^{\gamma}e_j(\xi)|=0, \]
\qquad where $\gamma$ is a multi-index such that $|\gamma|\le s+2, \ 1\le j\le N$ and 
\begin{eqnarray*}
\zeta (\xi)=
\begin{cases}
1+|\xi|^2, & \text{if} \ d\not= 2; \\
(1+|\xi|^2)(\log (3+|\xi|^2))^2, & \text{if} \ d=2.
\end{cases}
\end{eqnarray*}
\end{asm}
We give the definition of $H^s$-solution for the rescaled system.
\begin{defi}[$H^s$-solution]
\label{yzdef}
Let $u_0,v_0$ belong to $H^s$. Fix $0<T<\infty$. We say that a triple $(y,z,\tau)$ is a $H^s$-solution of \textnormal{(\ref{RSNLSS})}, where $\tau(\le T)$ is an $(\mathcal{F}_t)$-stopping time, and $y=(y(t))_{t\in [0,\tau]},z=(z(t))_{t\in [0,\tau]}$ is an $H^s$-valued continuous $(\mathcal{F}_t)$-adapted process, such that $z\overline{y},y^2\in L^1(0,\tau;H^{s-2})$, $\Pas$, and it satisfies $\Pas$
\begin{align}
\label{116}
y(t)&=u_0+\int_0^tA_{\ell}(r)y(r)dr-\int_0^ti\lambda\overline{e^{W(r)}}z(r)\overline{y(r)}dr, \quad t\in[0,\tau], \\
\label{117}
z(t)&=v_0+\int_0^tA_{L}(r)z(r)dr-\int_0^ti\kappa e^{W(r)}y^2(r)dr, \quad t\in[0,\tau],
\end{align}
as an equation in $H^{s-2}$.
\end{defi}
We say that uniqueness for (\ref{RSNLSS}) holds
in the function space $S$ with respect to variable $\xi$, if for any two $H^s$-solutions $(y_i,z_i,\tau_i), \ y_i,z_i\in S, \ i=1,2$, it holds $\Pas$ that $y_1=y_2,z_1=z_2 \ \text{on} \ [0,\tau_1\wedge \tau_2]$.

The next theorem holds by the similar argument with the single case \cite{BRZ14, BRZ16}.
\begin{theorem}
\label{uvyz}
For $s=0,1,$ the following holds.
\begin{enumerate}
\item Let $(y,z,\tau)$ be a $H^s$-solution of (\ref{RSNLSS}) in the sense of Definition \ref{yzdef}. Set $u:=e^Wy,v:=e^Wz$. Then $(u,v,\tau)$ is a $H^s$-solution of (\ref{SNLSS}) in the sense of Definition \ref{uvdef}.
\item Let $(u,v,\tau)$ be a $H^s$-solution of (\ref{SNLSS}) in the sense of Definition \ref{uvdef}. 
Set $y:=e^{-W}u,z:=e^{-W}v$. Then $(y,z,\tau)$ is a $H^s$-solution of (\ref{RSNLSS}) in the sense of Definition \ref{yzdef}.
\end{enumerate}
\end{theorem}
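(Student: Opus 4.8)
The plan is to deduce both implications from the Itô product formula, once the semimartingale decomposition of the multiplication process $e^{\pm W(t,\cdot)}$ has been recorded. Since $N<\infty$, $W(t,\xi)=\sum_{j=1}^{N}\mu_j e_j(\xi)\beta_j(t)$ is a finite linear combination of independent real Brownian motions with coefficients $e_j\in C_b^{\infty}(\mathbb{R}^d)$, so that $e^{\pm W(t,\cdot)}=F_{\pm}(\beta_1(t),\dots,\beta_N(t))$ for a smooth map $F_{\pm}$ with values in the algebra of multipliers acting boundedly on both $H^{s}$ and $H^{s-2}$. This is exactly where (H1$)_s$ enters: it makes multiplication by $e^{\pm W}$ together with all the spatial derivatives occurring below---hence also the operators $A_{\ell}(t),A_{L}(t)$ of \eqref{EOl}--\eqref{EOL}---act continuously between the relevant Sobolev spaces, uniformly on bounded time intervals, $\Pas$. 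The finite-dimensional Itô formula then yields, as identities between multiplier-valued semimartingales,
\begin{align*}
d\bigl(e^{W}\bigr)&=e^{W}\,dW+\tilde{\mu}\,e^{W}\,dt, \\
d\bigl(e^{-W}\bigr)&=-e^{-W}\,dW+\tilde{\mu}\,e^{-W}\,dt,
\end{align*}
since $\langle W,W\rangle_t=\bigl(\sum_{j=1}^{N}\mu_j^{2}e_j^{2}\bigr)t=2\tilde{\mu}\,t$ (no conjugation enters, $W$ being $\mathbb{C}$-valued, and $\tilde{\mu}=\tfrac12\sum_j\mu_j^{2}e_j^{2}$).

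Next I would transfer the structural conditions. Multiplication by the continuous, $(\mathcal{F}_t)$-adapted process $e^{\pm W}$ preserves $H^{s}$-valued continuity and adaptedness and maps $L^{1}(0,\tau;H^{s-2})$ into itself; since moreover $v\overline{u}=|e^{W}|^{2}z\overline{y}$ and $u^{2}=e^{2W}y^{2}$, one gets $z\overline{y},y^{2}\in L^{1}(0,\tau;H^{s-2})$ if and only if $v\overline{u},u^{2}\in L^{1}(0,\tau;H^{s-2})$, so the regularity clauses of Definitions \ref{uvdef} and \ref{yzdef} are equivalent under $u=e^{W}y$, $v=e^{W}z$. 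For the equations, in part (1) I apply the product rule to $u=e^{W}y$ as an $H^{s-2}$-valued identity: since $y$ solves \eqref{116} it has no martingale part, so the cross-variation term is absent and
\begin{align*}
du&=y\,d\bigl(e^{W}\bigr)+e^{W}\,dy \\
&=u\,dW+\tilde{\mu}\,u\,dt+e^{W}\Bigl(A_{\ell}(t)y-i\lambda\,\overline{e^{W}}\,z\overline{y}\Bigr)dt .
\end{align*}
By \eqref{EOl}, $e^{W}A_{\ell}(t)y=-\tfrac{i}{2\ell}\Delta u-(\mu+\tilde{\mu})u$ and $e^{W}\overline{e^{W}}\,z\overline{y}=v\overline{u}$; the two $\tilde{\mu}\,u$ contributions cancel, leaving precisely \eqref{14}. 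The computation for $v=e^{W}z$ is identical, with \eqref{EOL} in place of \eqref{EOl} and $e^{W}\cdot e^{W}y^{2}=u^{2}$. In part (2) I apply the product rule to $y=e^{-W}u$: now $u$ carries the martingale part $\int_0^{\cdot}u\,dW$, so $d\langle e^{-W},u\rangle=-2\tilde{\mu}\,e^{-W}u\,dt$, the two $dW$-terms cancel, and the surviving drift---the $+\tilde{\mu}e^{-W}u$ from the Itô correction of $e^{-W}$, the $-2\tilde{\mu}e^{-W}u$ from the cross-variation, and $-\mu e^{-W}u$---assembles the factor $-(\mu+\tilde{\mu})$ appearing in $c_{\ell}$ (cf.\ \eqref{csys}); using $e^{-W}v\overline{u}=\overline{e^{W}}z\overline{y}$ this gives $dy=A_{\ell}(t)y\,dt-i\lambda\,\overline{e^{W}}\,z\overline{y}\,dt$, i.e.\ \eqref{116}, and likewise $dz=A_{L}(t)z\,dt-i\kappa\,e^{W}y^{2}\,dt$ via $e^{-W}u^{2}=e^{W}y^{2}$.

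The main obstacle is not this algebra but making the $H^{s-2}$-valued Itô/product calculus rigorous: one must verify that $e^{\pm W}$ is a genuine continuous semimartingale with values in the multiplier space, that the product rule for $e^{\pm W}$ times an $H^{s-2}$-valued process holds with exactly the cross-variation written above (in particular that the stochastic and Bochner integrals commute with multiplication by $e^{\pm W}$), and that the only unbounded operator involved, $\Delta\colon H^{s}\to H^{s-2}$, causes no trouble---this is normally handled by localizing in the $H^{s}$-norm and approximating by smooth or truncated data. All of this is carried out for the single equation in \cite{BRZ14,BRZ16}; the only genuinely new point here is the transformation of the quadratic coupling terms $v\overline{u}$ and $u^{2}$, sketched above, so I would organize the proof by reducing to those references and checking that the interaction terms are carried over correctly.
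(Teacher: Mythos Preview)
Your proposal is correct and aligns with the paper's treatment: the paper does not give a proof at all, merely stating that the theorem ``holds by the similar argument with the single case \cite{BRZ14,BRZ16}.'' You carry out exactly that reduction---applying the It\^o product rule to $e^{\pm W}$ times the $H^{s-2}$-valued process, checking that the $\tilde{\mu}$-terms and $dW$-terms cancel as required, and isolating the quadratic coupling identities $e^{W}\overline{e^{W}}z\overline{y}=v\overline{u}$, $e^{-W}u^{2}=e^{W}y^{2}$ as the only genuinely new ingredients---so your write-up is in fact more detailed than the paper's while following the same route.
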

To prove well-posedness, 
we introduce two lemmas for the evolution operators and their Strichartz estimates which were shown in \cite{BRZ14} and \cite{BRZ16} (see also \cite{D96}). In this paper, we consider those in the probability space $(\Omega,\mathcal{F},\Pro)$.
\begin{lem}
\label{evosys}
Assume (H1$)_s$. For $\Pas \ \omega\in\Omega,$ operators $A_{\ell}(t),A_{L}(t)$ defined in (\ref{EOl}) and (\ref{EOL}) generate evolution operators $U_{\ell}(t,r)=U_{\ell}(t,r,\omega),U_{L}(t,r)=U_{L}(t,r,\omega), \ 0\le r\le t\le T,$ respectively in the spaces $H^s(\mathbb{R}^d)$. Moreover, for each $x\in H^s(\mathbb{R}^d),$ processes $[r,T]\ni t \mapsto U_{\ell}(t,r)x\in H^s(\mathbb{R}^d), \ [r,T]\ni t \mapsto U_L(t,r)x\in H^s(\mathbb{R}^d),$
are continuous and $(\mathcal{F}_t)$-adapted, hence progressively measurable with respect to the filtration $(\mathcal{F}_t)_{t\ge r}$.
\end{lem}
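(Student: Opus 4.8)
The plan is to treat \eqref{RSNLSS} as a \emph{decoupled pair} of linear problems: $A_\ell(t)$ and $A_L(t)$ are independent Schr\"odinger-type operators, each of exactly the form of the linear part of the rescaled single stochastic nonlinear Schr\"odinger equation of \cite{BRZ14,BRZ16}, so it suffices to run that single-equation construction once with the constant $\ell$ and once with $L$. The construction is pathwise: fix $\omega$ outside a $\Pro$-null set on which all $t\mapsto\beta_j(t,\omega)$ are continuous; then $W(t,\xi,\omega)=\sum_{j=1}^N\mu_je_j(\xi)\beta_j(t,\omega)$ is continuous in $t$, and since $e_j\in C_b^\infty(\mathbb{R}^d)$ it lies in $C_b^\infty(\mathbb{R}^d)$ in $\xi$ with norms bounded uniformly on $[0,T]$; moreover (H1$)_s$ makes $\partial^\gamma e_j$ — hence $\partial^\gamma W(t,\cdot,\omega)$ — decay at the short-range rate (roughly like $\langle\xi\rangle^{-2}$, up to the logarithmic correction when $d=2$) for $|\gamma|\le s+2$. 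Reading off \eqref{bsys}--\eqref{csys}, the coefficients $b_\ell,c_\ell$ (and $b_L,c_L$) are then continuous in $t$, smooth and decaying in $\xi$, with all $\xi$-derivatives up to order $s$ bounded on $[0,T]\times\mathbb{R}^d$; the level $s+2$ in (H1$)_s$ is exactly what is needed because $c_\ell$ already carries two derivatives of $W$. In particular $t\mapsto A_\ell(t)$ is continuous from $[0,T]$ into $\mathcal L(H^{s+2},H^s)$ with the $t$-independent domain $H^{s+2}$, and likewise for $A_L$.

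The substance is then the deterministic construction of the evolution family, which I would take over from \cite{BRZ14,BRZ16} (see also \cite{D96}). The point worth stressing is that, because $A_\ell(t)$ is \emph{dispersive} rather than parabolic, the first-order term $b_\ell(t)\cdot\nabla$ cannot be absorbed by a naive energy estimate; the remedy — the weighted-energy method going back to Mizohata and Doi — is to replace the $H^s$-norm by an equivalent norm $\|\cdot\|_{*}$ twisted by a suitable (pseudo)differential weight, whereupon the bad contribution is controlled precisely because $\nabla W(t,\cdot)$ is \emph{short-range} — its $\langle\xi\rangle^{-2}$-type decay, supplied by (H1$)_s$, is integrable along rays — yielding a uniform quasi-dissipativity bound $\text{Re}\langle A_\ell(t)u,u\rangle_{*}\le C(\omega)\|u\|_{*}^2$ on $[0,T]$ (and simultaneously on $H^{s+2}$). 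This gives the stability of $\{A_\ell(t)\}_{t\in[0,T]}$ on the pair $(H^{s+2},H^s)$; combined with the norm-continuity of $t\mapsto A_\ell(t)$ established above, the classical theorem on non-autonomous evolution equations of ``hyperbolic'' type produces a unique two-parameter family $U_\ell(t,r)=U_\ell(t,r,\omega)$, $0\le r\le t\le T$, strongly continuous on $H^s$, with $U_\ell(t,r)H^{s+2}\subset H^{s+2}$ and $\partial_tU_\ell(t,r)x=A_\ell(t)U_\ell(t,r)x$ for $x\in H^{s+2}$; the same applies to $A_L(t)$, giving $U_L$.

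It remains to check the adaptedness and progressive measurability. For fixed $x\in H^s$, $U_\ell(t,r)x$ is obtained as a limit — uniform on compact time-sets — of approximants built by freezing $A_\ell(\cdot)$ on partitions $r=\sigma_0<\cdots<\sigma_n=t$, i.e.\ finite products $\prod_k\exp\big((\sigma_{k+1}-\sigma_k)A_\ell(\sigma_k)\big)$ (equivalently, the Yosida/Kato iterates). Each approximant is a continuous function of the finitely many operators $A_\ell(\sigma_k)$, which are $\mathcal F_t$-measurable because $W(\sigma_k,\cdot,\cdot)$ is; hence the countable limit $U_\ell(t,r)x$ is $\mathcal F_t$-measurable, joint continuity in $(t,r)$ follows from the evolution-family structure, and a continuous $(\mathcal F_t)$-adapted process is automatically progressively measurable. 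The one genuinely delicate point is the weighted energy estimate of the previous paragraph — keeping the twisting and the constant $C(\omega)$ uniform over $t\in[0,T]$ with only the pathwise bounds on $W$ and its derivatives at our disposal, which is exactly what dictates imposing (H1$)_s$ at the level $s+2$; the measurability bookkeeping, though somewhat lengthy, is routine once the deterministic evolution family is in hand.
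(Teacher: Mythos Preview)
Your proposal is correct and is exactly in the spirit of what the paper does: the paper gives no proof of this lemma at all, stating only that it ``was shown in \cite{BRZ14} and \cite{BRZ16} (see also \cite{D96})'', i.e.\ one applies the single-equation construction from those references to $A_\ell$ and $A_L$ separately. Your sketch faithfully unpacks that construction --- the pathwise reduction, the role of (H1$)_s$ in making the first-order coefficient short-range for Doi's weighted-energy estimate, and the adaptedness via approximation --- so there is nothing to correct.
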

\begin{lem}
\label{yzSE}
Assume (H1$)_s$. Then for any $T>0, \ u_0,v_0\in H^s$ and $f,g\in L^{q'_2}(0,T;W^{s,p'_2}),$ solutions of the integral equations
\begin{align}
\label{yzms}
y(t)&=U_{\ell}(t,0)u_0+\int_0^tU_{\ell}(t,r)f(r)dr, \quad 0\le t\le T, \\
z(t)&=U_L(t,0)v_0+\int_0^tU_L(t,r)g(r)dr, \quad 0\le t\le T,
\end{align}
satisfy estimates
\begin{align}
\label{yzSEH^s}
||y||_{L^{q_1}(0,T;W^{s,p_1})}&\le C_T(||u_0||_{H^s}+||f||_{L^{q'_2}(0,T;W^{s,p'_2})}), \\
||z||_{L^{q_1}(0,T;W^{s,p_1})}&\le C_T(||v_0||_{H^s}+||g||_{L^{q'_2}(0,T;W^{s,p'_2})}), 
\end{align}
where $(p_1,q_1)$ and $(p_2,q_2)$ are Strichartz pairs, namely
\begin{eqnarray*}
(p_i,q_i)\in [2,\infty]\times [2,\infty]:\frac{2}{q_i}=\frac{d}{2}-\frac{d}{p_i}, \quad \text{if} \ d\not= 2, \\
(p_i,q_i)\in [2,\infty)\times (2,\infty]:\frac{2}{q_i}=\frac{d}{2}-\frac{d}{p_i}, \quad \text{if} \ d=2.
\end{eqnarray*}
Furthermore, the process $C_t, \ t\ge 0,$ can be taken to be $(\mathcal{F}_t)$-progressively measurable, increasing and continuous.
\end{lem}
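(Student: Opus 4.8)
The plan is to decouple the two integral equations, reduce to a single inhomogeneous variable-coefficient Schr\"odinger equation, and then establish Strichartz estimates for that equation under the decay hypothesis (H1$)_s$, following the single-equation analysis of \cite{BRZ14,BRZ16} (see also \cite{D96}).

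First I would observe that \eqref{yzms} is linear and uncoupled: $y$ is built only from $U_\ell$, $u_0$, $f$, and $z$ only from $U_L$, $v_0$, $g$. Hence it suffices to prove, for a generic constant $a>0$ in place of $\ell$ or $L$, that the solution $w(t)=U_a(t,0)w_0+\int_0^t U_a(t,r)h(r)\,dr$ of the inhomogeneous equation associated with $A_a(t)=-i(\tfrac{1}{2a}\Delta+b_a(t,\xi)\cdot\nabla+c_a(t,\xi))$ satisfies $\|w\|_{L^{q_1}(0,T;W^{s,p_1})}\le C_T(\|w_0\|_{H^s}+\|h\|_{L^{q'_2}(0,T;W^{s,p'_2})})$, with $C_t$ an $(\mathcal F_t)$-progressively measurable, increasing, continuous process (its adaptedness inherited from that of $U_\ell,U_L$ in Lemma \ref{evosys}). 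Applying this with $a=\ell$ and $a=L$ and taking the pathwise maximum of the two resulting constants gives both displayed estimates with a single process $C_t$.

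For the single-equation estimate I would work perturbatively around the free group $e^{i\frac{t}{2a}\Delta}$, which after the harmless time rescaling $t\mapsto t/a$ obeys the classical Strichartz and local smoothing estimates. Duhamel's formula against this group moves the lower-order terms $b_a\cdot\nabla w$ and $c_a w$ into the inhomogeneity. Under (H1$)_s$ the coefficients $b_a(t,\cdot)$, $c_a(t,\cdot)$ and the spatial derivatives of them needed for the $W^{s,\cdot}$ norms decay at spatial infinity at the rate set by $\zeta(\xi)$, locally uniformly in $t$, with decay constant controlled pathwise by $\sup_{r\le t}|\beta(r)|$-type quantities, hence by a continuous increasing adapted process; in particular $b_a$ decays faster than $\langle\xi\rangle^{-1}$. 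The zeroth-order term $c_a w$ is then absorbed by H\"older and Strichartz. The first-order term $b_a\cdot\nabla w$ costs a full derivative and cannot be handled by Strichartz alone, so I would bring in the inhomogeneous local smoothing estimate for the Schr\"odinger group, which gains half a derivative against an $\langle\xi\rangle^{-1/2-\varepsilon}$ weight: writing $b_a=(\langle\xi\rangle^{1/2+\varepsilon}b_a)\langle\xi\rangle^{-1/2-\varepsilon}$ and using that $b_a$ decays faster than $\langle\xi\rangle^{-1}$, this term is absorbed into the local smoothing norm. A contraction / continuity argument on a short time interval in the combined Strichartz-plus-local-smoothing space, iterated over $[0,T]$ using the control of the coefficient norms there, closes the estimate; that the constant is increasing in $T$, continuous, and adapted follows from the corresponding properties of the coefficient processes.

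The main obstacle is the global-in-time control of the first-order perturbation $b_a\cdot\nabla$: one must check that the decay encoded in (H1$)_s$ is exactly strong enough to compensate the derivative loss via local smoothing, uniformly over $[0,T]$ and over the derivatives required for $s=1$, and that the bicharacteristic flow of $A_a(t)$ — a bounded perturbation of the free flow — stays non-trapping so that the smoothing estimate is available at all. This is precisely where the detailed parametrix/iteration analysis of \cite{BRZ14,BRZ16,D96} is needed; granting it, the passage from the single equation to the system and the measurability and monotonicity of $C_t$ are routine.
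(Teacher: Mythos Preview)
The paper does not give its own proof of this lemma: it is stated as a result ``which were shown in \cite{BRZ14} and \cite{BRZ16} (see also \cite{D96})'' and is simply quoted from those references. Your proposal correctly observes that the two equations in \eqref{yzms} are decoupled, so the lemma reduces immediately to the single-equation Strichartz estimate for the variable-coefficient operator $A_a(t)$ treated in \cite{BRZ14,BRZ16}; your sketch of the perturbative Strichartz-plus-local-smoothing argument under (H1$)_s$, with the constant $C_t$ inherited as an adapted increasing continuous process, is exactly the content of those references, so your approach is the same as the paper's (which is to cite them).
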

At the end of this subsection, we describe the organization of this paper.
In Section 2, we prove the existence of $L^2$-time-local solutions for $1\le d\le 4$. In Section 3, we prove the existence of $H^1$-time-local solutions for $1\le d\le 6$.
Then, in Section 4, we prove the existence of $L^2$-time global solutions for $1\le d\le 3$, and in Section 5, we prove the existence of $H^1$-time global solutions for $1\le d\le 3$ under the assumption (\ref{49}) below.
Note that the uniform boundedness of the $H^1$-time global solution requires the further assumption $\text{Re} \mu_j=0$.

\section{Local existence of $L^2$-solutions}

For any $u_0,v_0\in L^2$, we solve (\ref{SNLSS}) in the spaces
\begin{align}
\label{solspX}
\mathcal{X}_t:=
\begin{cases}
(C\cap L^{\infty})([0,t];L^2)\cap L^4(0,t;L^{\infty}), \quad d=1, \\
(C\cap L^{\infty})([0,t];L^2)\cap L^{q_0}(0,t;L^{r_0}), \quad d=2, \\
(C\cap L^{\infty})([0,t];L^2)\cap L^2(0,t;L^{\frac{2d}{d-2}}), \quad d=3,4,
\end{cases}
\end{align}
where $0<\frac{2}{q_0}=1-\frac{2}{r_0}<1$ with $r_0$ sufficiently large.
\begin{theorem}
\label{main}
Assume (H1$)_0$ and $1\le d\le 4$. Then, for each $u_0,v_0\in L^2$ and $0<T<\infty$, there exists a sequence of $L^2$-local solutions $(u_n,v_n,\tau_n)$ of \textnormal{(\ref{SNLSS})}, $n\in \mathbb{N}$ where $\tau_n$ is a sequence of increasing stopping times. For every $n\ge 1$, it holds $\Pas$ that
\begin{align}
\label{uvn}
u_n|_{[0,\tau_n]},v_n|_{[0,\tau_n]}\in \mathcal{X}_{\tau_n},
\end{align}
and uniqueness holds in the function space 
$\mathcal{X}_{\tau_n}$. Moreover for $1\le d\le 3$, defining $\displaystyle \tau^*(u_0,v_0)=\lim_{n\to \infty}\tau_n, \ u=\lim_{n\to \infty}u_n\mathbf{1}_{[0,\tau^*(u_0,v_0))}$ and $\displaystyle v=\lim_{n\to \infty}v_n\mathbf{1}_{[0,\tau^*(u_0,v_0))},$ we have the blowup alternative, that is, for $\Pas \ \omega$, if $\tau_n(\omega)<\tau^*(u_0,v_0)(\omega),$ \ for every $n\in \mathbb{N}$, then
\begin{align*}
\lim_{t\to \tau^*(u_0,v_0)(\omega)}(||u(t)(\omega)||_{L^2}&+||v(t)(\omega)||_{L^2})=\infty.
\end{align*}
\end{theorem}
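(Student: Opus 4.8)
The plan is to work with the equivalent rescaled system \eqref{RSNLSS} via Theorem \ref{uvyz}, since the Strichartz estimates of Lemma \ref{yzSE} are available there with $\Pas$ finite, progressively measurable, continuous constants $C_t$. I would first set up a Picard/fixed-point scheme for \eqref{116}--\eqref{117} in a ball of the deterministic-looking space $\mathcal{X}_t$ (with $W^{s,p}$ replaced by $L^p$, $s=0$), but with everything depending measurably on $\omega$. The key point is that the nonlinearities $z\overline{y}$ and $y^2$ are quadratic, so for $1\le d\le 4$ one can choose the admissible pair in $\mathcal{X}_t$ so that, by Hölder in space and then in time, $\|z\overline{y}\|_{L^{q_2'}(0,t;L^{p_2'})}$ and $\|y^2\|_{L^{q_2'}(0,t;L^{p_2'})}$ are controlled by a positive power of $t$ times $\|y\|_{\mathcal{X}_t}\|z\|_{\mathcal{X}_t}$ (resp.\ $\|y\|_{\mathcal{X}_t}^2$); the factor $t^\theta$, $\theta>0$, is what makes the map a contraction on a short random time interval. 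The exponential weights $e^{W(r,\xi)}$, $\overline{e^{W(r,\xi)}}$ are bounded in $L^\infty_\xi$ on $[0,t]$ $\Pas$ (and the bound is progressively measurable), so they are harmless for the estimates and only enter the measurability bookkeeping.

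Concretely, I would fix $M>\|u_0\|_{L^2}+\|v_0\|_{L^2}$ and, for each $\omega$, define a stopping time of the form
\begin{align*}
\tau_1(\omega)=\inf\{t\ge 0 : C_t(\omega)\,\Gamma_t(\omega)\,t^\theta \ge \tfrac12\}\wedge T,
\end{align*}
where $\Gamma_t$ collects the (progressively measurable, continuous, increasing) bounds on $e^{\pm W}$ and the Hölder constants; on $[0,\tau_1]$ the solution map $\Phi:(y,z)\mapsto(\text{RHS of \eqref{116}},\text{RHS of \eqref{117}})$ maps the ball $\{\|y\|_{\mathcal{X}_{\tau_1}}+\|z\|_{\mathcal{X}_{\tau_1}}\le 2M\}$ into itself and is a strict contraction. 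Banach's fixed point theorem then produces $(y_1,z_1)$; progressive measurability of the fixed point is obtained the standard way, as the $\mathcal{X}$-limit of the Picard iterates, each of which is progressively measurable because the evolution operators $U_\ell(\cdot,\cdot),U_L(\cdot,\cdot)$ are (Lemma \ref{evosys}) and the nonlinear map preserves adaptedness. Setting $u_1:=e^W y_1,\ v_1:=e^W z_1$ gives, by Theorem \ref{uvyz}(1), an $L^2$-solution of \eqref{SNLSS} on $[0,\tau_1]$ lying in $\mathcal{X}_{\tau_1}$. Uniqueness in $\mathcal{X}_{\tau_1}$ follows by the same contraction estimate applied to the difference of two solutions (on $[0,\tau_1\wedge\tau_2]$, after a further localization to make the contraction constant $<1$, then a continuation/connectedness argument on $[0,\tau_1\wedge\tau_2]$); transferring uniqueness for \eqref{SNLSS} to uniqueness for \eqref{RSNLSS} uses Theorem \ref{uvyz}(2).

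To build the whole sequence $(u_n,v_n,\tau_n)$ I would iterate: having $(u_n,v_n)$ on $[0,\tau_n]$ with value $(u_n(\tau_n),v_n(\tau_n))\in L^2$, restart the construction at time $\tau_n$ with that (random, $\mathcal{F}_{\tau_n}$-measurable) initial datum, using the evolution operators $U_\ell(\cdot,\tau_n),U_L(\cdot,\tau_n)$ and the shifted Strichartz constants, to get an extension on $[\tau_n,\tau_{n+1}]$ with $\tau_{n+1}>\tau_n$ on $\{\tau_n<T\}$; gluing gives $(u_{n+1},v_{n+1},\tau_{n+1})$ and the stopping times are increasing by construction. Then set $\tau^*=\lim_n\tau_n$, $u=\lim_n u_n\mathbf 1_{[0,\tau^*)}$, $v=\lim_n v_n\mathbf 1_{[0,\tau^*)}$. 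For the blow-up alternative ($1\le d\le 3$), argue by contradiction: on an event of positive probability suppose $\tau_n(\omega)<\tau^*(\omega)$ for all $n$ yet $\liminf_{t\to\tau^*}(\|u(t)\|_{L^2}+\|v(t)\|_{L^2})=:R<\infty$; pick $t_k\uparrow\tau^*$ with $\|u(t_k)\|_{L^2}+\|v(t_k)\|_{L^2}\le 2R$, and apply the local existence with initial data at $t_k$ and bound $M=3R$. Because in dimensions $1\le d\le 3$ the local existence time produced by the contraction depends only on $M$ (and on the $\Pas$-continuous constants $C_t,\Gamma_t$ evaluated near $\tau^*$, which are bounded there), one gets a uniform lower bound $\delta>0$ on the length of the extension interval for all large $k$; hence the solution extends past $\tau^*$, contradicting $\tau_n<\tau^*$ for all $n$. (In $d=4$ the relevant Strichartz pair is $L^2$-admissible with no time gain, so this uniform-$\delta$ argument — hence the blow-up alternative — is exactly what fails, which is why the statement restricts it to $d\le 3$.)

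The main obstacle is not the fixed point itself but the measurability and the "random-constant" bookkeeping: one must check that all stopping times defined via the continuous increasing processes $C_t,\Gamma_t$ are genuine $(\mathcal{F}_t)$-stopping times, that the Picard iterates and their limit are progressively measurable, and — for the blow-up alternative — that the local existence time can be chosen uniformly over the relevant random initial data, which hinges on the continuity (hence local boundedness near $\tau^*$) of $C_t(\omega)$ and of the exponential-weight bounds. This is the step I expect to require the most care.
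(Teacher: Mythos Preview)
Your proposal is correct and follows essentially the same route as the paper: reduce to the rescaled system via Theorem~\ref{uvyz}, run a contraction in $\mathcal{X}_{\tau}$ using the Strichartz estimates of Lemma~\ref{yzSE} with a random stopping time that makes the quadratic nonlinearity small, iterate from $\tau_n$ with data $(y_n(\tau_n),z_n(\tau_n))$, and prove the blowup alternative by a uniform-lower-bound-on-the-step argument. One small point to tighten: your ball radius $M$ cannot be a deterministic number larger than $\|u_0\|_{L^2}+\|v_0\|_{L^2}$, because already the linear piece satisfies only $\|U_\ell(\cdot,0)u_0\|_{\mathcal{X}_t}\le C_t\|u_0\|_{L^2}$ with the \emph{random} Strichartz constant $C_t$; the paper handles this by taking the radius itself random, $M_1=3C_{\tau_1}(\|u_0\|_{L^2}+\|v_0\|_{L^2})$, and building $M_1$ (equivalently $C_t$) into the process $Z^{(1)}_t$ that defines $\tau_1$---exactly the ``random-constant bookkeeping'' you flag at the end.
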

By the equivalence of two expressions of solutions via the rescaling transformations (\ref{21}) and (\ref{22}),
Theorem \ref{main} is rewritten as Theorem \ref{yzmain}.
\begin{theorem}
\label{yzmain}
Assume (H1$)_0$ and $1\le d\le 4$. Then, for each $u_0,v_0\in L^2$ and $0<T<\infty$, there exists a sequence of $L^2$-local solutions $(y_n,z_n,\tau_n)$ of \textnormal{(\ref{RSNLSS})}, $n\in \mathbb{N}$ where $\tau_n$ is a sequence of increasing stopping times. For every $n\ge 1$, it holds $\Pas$ that
\begin{align}
\label{yzn}
y_n|_{[0,\tau_n]},z_n|_{[0,\tau_n]}\in \mathcal{X}_{\tau_n},
\end{align}
and uniqueness holds in the function space 
$\mathcal{X}_{\tau_n}$. Moreover for $1\le d\le 3$, defining $\displaystyle \tau^*(u_0,v_0)=\lim_{n\to \infty}\tau_n, \ y=\lim_{n\to \infty}y_n\mathbf{1}_{[0,\tau^*(u_0,v_0))}$ and $\displaystyle z=\lim_{n\to \infty}z_n\mathbf{1}_{[0,\tau^*(u_0,v_0))},$ we have the blowup alternative, that is, for $\Pas \ \omega$, if $\tau_n(\omega)<\tau^*(u_0,v_0)(\omega),$ \ for every $n\in \mathbb{N}$, then
\begin{align*}
\lim_{t\to \tau^*(u_0,v_0)(\omega)}(||y(t)(\omega)||_{L^2}&+||z(t)(\omega)||_{L^2})=\infty.
\end{align*}
\end{theorem}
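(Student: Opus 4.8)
The plan is to prove Theorem \ref{yzmain} directly — the deterministic/random system \eqref{RSNLSS} is the one where a genuine fixed-point argument can run, since $A_\ell(t), A_L(t)$ generate the evolution operators of Lemma \ref{evosys} with Strichartz estimates of Lemma \ref{yzSE} — and then invoke Theorem \ref{uvyz} to transfer everything to \eqref{SNLSS}, which will give Theorem \ref{main}. Actually, since the excerpt asks for Theorem \ref{yzmain}, I focus there. Fix $\omega$ outside the null set where Lemmas \ref{evosys}, \ref{yzSE} fail. Write the mild formulation: a solution of \eqref{RSNLSS} on $[0,\tau]$ is a pair $(y,z)$ with
\begin{align*}
y(t) &= U_\ell(t,0)u_0 - i\lambda\int_0^t U_\ell(t,r)\bigl(\overline{e^{W(r)}}\,z(r)\,\overline{y(r)}\bigr)\,dr, \\
z(t) &= U_L(t,0)v_0 - i\kappa\int_0^t U_L(t,r)\bigl(e^{W(r)}y^2(r)\bigr)\,dr.
\end{align*}
The first step is to set up the contraction on a ball of $\mathcal{X}_t\times\mathcal{X}_t$ for $t$ small (random, $\omega$-dependent). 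The admissible Strichartz pair is chosen dimension-by-dimension as in \eqref{solspX}; the key nonlinear estimate is to bound $\|z\overline{y}\|_{L^{q_2'}(0,t;W^{0,p_2'})}$ and $\|y^2\|_{L^{q_2'}(0,t;W^{0,p_2'})}$ by $\|y\|_{\mathcal{X}_t}\|z\|_{\mathcal{X}_t}$ (resp. $\|y\|_{\mathcal{X}_t}^2$) times a positive power of $t$, using Hölder in space (the dual Strichartz exponent pairs with the two Strichartz-controlled factors) and Hölder in time to extract the smallness factor $t^{\theta}$, $\theta>0$; the bounded smooth multiplier $e^{\pm W(r,\cdot)}\in C_b$ (uniformly on compact time intervals, $\Pas$) is absorbed into the constant. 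This is exactly the quadratic analogue of the single-equation estimate in \cite{BRZ14}; the interaction term $z\overline y$ is no worse than $y^2$ for the local theory.

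**Next**, because the Strichartz constant $C_t$ and the noise-dependent factors are $\Pas$ finite but not uniformly bounded in $\omega$, one does the standard localization: introduce stopping times
\[
\tau_n := n \wedge \inf\Bigl\{ t\ge 0 : C_t + \sup_{r\le t}\bigl(\|e^{W(r)}\|_{C_b} + \|e^{-W(r)}\|_{C_b}\bigr) \ge n \Bigr\}\wedge T,
\]
truncate the nonlinearity (or equivalently run the contraction on the event $\{t\le\tau_n\}$), obtain on each such piece a genuine contraction for $t\le\delta_n(\omega)$, and then iterate in time up to $\tau_n$ by the usual gluing (the local existence time depends only on the norms of the data at the restart point and on the frozen constant $n$, which is bounded on $[0,\tau_n]$, so finitely many steps cover $[0,\tau_n]$). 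Adaptedness and continuity of the solution follow from those properties of $U_\ell,U_L$ in Lemma \ref{evosys} and the continuity of $C_t$; the membership \eqref{yzn} is read off from the fixed-point space. Uniqueness in $\mathcal{X}_{\tau_n}$ is the same contraction estimate applied to the difference of two solutions, using Gronwall on $[0,\tau_1\wedge\tau_2]$. Monotonicity $\tau_n\uparrow$ is immediate from the definition.

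**Then** set $\tau^*(u_0,v_0) = \lim_n \tau_n$ and, for $1\le d\le 3$, prove the blow-up alternative: on the event $\{\tau_n < \tau^*\ \forall n\}$ one must show $\|y(t)\|_{L^2} + \|z(t)\|_{L^2}\to\infty$ as $t\uparrow\tau^*$. The argument is by contradiction — if the $L^2$-norm stayed bounded along some sequence $t_k\uparrow\tau^*(\omega)$, then on the interval $[0,\tau^*(\omega)]$ (where, for that fixed $\omega$, the frozen constants are finite because $\tau^*\le T$ and $C_\cdot$ is continuous hence bounded on the closed interval) the local existence time from $t_k$ would be bounded below by a fixed $\delta>0$, letting us extend past $\tau^*$ and contradicting maximality. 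Here $1\le d\le 3$ is used because for $d=4$ the relevant Strichartz exponent is $L^2(0,t;L^{2d/(d-2)}) = L^2(0,t;L^4)$ and the quadratic term is scaling-critical, so no $t^\theta$ smallness is available and the local time cannot be controlled by the $L^2$-norm alone — this is precisely why the blow-up alternative is only claimed for $d\le 3$.

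**The main obstacle** I expect is the nonlinear Strichartz estimate in $d=2$ and $d=4$: in $d=2$ one has no endpoint, so the pair $(q_0,r_0)$ with $r_0$ large must be chosen so that $z\overline y$ and $y^2$ land in the dual space $L^{q_0'}(0,t;L^{r_0'})$ with a strictly positive power of $t$ — this forces a careful bookkeeping of the three Hölder exponents and is the place where "$r_0$ sufficiently large" is actually needed; in $d=4$ the absence of a $t$-gain is what blocks the blow-up alternative and must be flagged rather than overcome. Everything else — the localization, the gluing, adaptedness, uniqueness — is routine once the nonlinear estimate is in hand and mirrors \cite{BRZ14,BRZ16}.
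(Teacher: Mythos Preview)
Your overall strategy — mild formulation, contraction via Strichartz, dimension-by-dimension nonlinear estimates, blow-up by contradiction — matches the paper. The nonlinear estimates you sketch are correct, and your remark about why $d=4$ loses the $t^\theta$ gain is exactly right.

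The gap is in your definition of the stopping times. You set
\[
\tau_n := n \wedge \inf\Bigl\{ t\ge 0 : C_t + \sup_{r\le t}\bigl(\|e^{W(r)}\|_{C_b} + \|e^{-W(r)}\|_{C_b}\bigr) \ge n \Bigr\}\wedge T,
\]
which freezes only the random constants, and then claim that within $[0,\tau_n]$ ``finitely many steps cover $[0,\tau_n]$'' because the local existence time depends on the data norm and on $n$. But the data norm at each restart point is the $L^2$-norm of the \emph{solution} there, and you have no a~priori bound on it; the step sizes could shrink to zero before reaching $\tau_n$. Indeed, with your definition, $\tau_n(\omega)=T$ for all $n$ large (since $C_\cdot$ and $e^{\pm W}$ are $\Pas$ bounded on $[0,T]$), so if your covering claim held you would obtain a global solution on $[0,T]$ outright and the blow-up alternative would be vacuous — inconsistent with the rest of your outline.

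The paper avoids this by defining the $\tau_n$ \emph{inductively through the contraction itself}: $\tau_1$ is the first time a process built from $C_t$ and $\|u_0\|_{L^2}+\|v_0\|_{L^2}$ exceeds a threshold; then $\tau_{n+1}=\tau_n+\sigma_n$, where $\sigma_n$ is defined analogously using $C_{\tau_n+t}$ and $\|y_n(\tau_n)\|_{L^2}+\|z_n(\tau_n)\|_{L^2}$. No claim is made that any fixed $\tau_n$ reaches a prescribed target; the sequence $(\tau_n)$ is simply whatever the iteration produces. The blow-up alternative then becomes a genuine statement: assuming $\sup_{t<\tau^*}(\|y\|_{L^2}+\|z\|_{L^2})=M^*<\infty$, one shows $\sigma_n\ge\sigma>0$ uniformly in $n$ (because each $\sigma_n$ is controlled by $M^*$ and $C_T$), forcing $\tau_n\to\infty$, a contradiction. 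Your blow-up argument is fine in spirit, but it only makes sense once the $\tau_n$ are defined this way.
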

\begin{proof}[Proof of Theorem \ref{yzmain}]
We solve the weak equations (\ref{116}) and (\ref{117}) in the mild sense, which is equivalent to Definition \ref{yzdef}, namely 
\begin{align}
\label{mild1}
y(t)&=U_{\ell}(t,0)u_0-i\int_0^tU_{\ell}(t,s)(\lambda\overline{e^{W(s)}}z(s)\overline{y(s)})ds, \quad t\in[0,T], \\
\label{mild2}
z(t)&=U_L(t,0)v_0-i\int_0^tU_L(t,s)(\kappa e^{W(s)}y^2(s))ds, \quad t\in [0,T].
\end{align}
We consider the following map
\begin{align}
\label{tauCM}
\mathcal{T}(y,z)(t)=&\left( \Phi(y,z),\Psi(y,z)\right) \nonumber \\
:=&\left( U_{\ell}(t,0)u_0-i\int_0^tU_{\ell}(t,s)(\lambda\overline{e^{W(s)}}z(s)\overline{y(s)})ds, \right. \nonumber \\
&\quad\left. U_L(t,0)v_0-i\int_0^tU_L(t,s)(\kappa e^{W(s)}y^2(s))ds \right).
\end{align}
First, we consider the case $d=1$. Then, the space $\mathcal{X}_t$ is $(C\cap L^{\infty})([0,t];L^2)\cap L^4(0,t;L^{\infty})$. \\
\textbf{Step 1.} First, we find $(y_1,z_1,\tau_1)$. Fix $\omega \in \Omega$ and consider $\mathcal{T}(y,z)$ on the set
\[ E^{\tau_1}_{M_1}=\{ (y,z)\in (\mathcal{X}_{\tau_1})^2:||y||_{\mathcal{X}_{\tau_1}}+||z||_{\mathcal{X}_{\tau_1}}\le M_1\}, \]
where $||y||_{\mathcal{X}_{\tau_1}}:=||y||_{L^{\infty}(0,t;L^2)}+||y||_{L^4(0,t;L^{\infty})}, \ \tau_1=\tau_1(\omega)\in (0,T]$ and $M_1=M_1(\omega)>0$ are random variables. Using the Strichartz estimates and H$\ddot{\text{o}}$lder's inequality, the following hold. 
\begin{align*}
||\Phi(y,z)||_{\mathcal{X}_{\tau_1}}\lesssim& ||u_0||_{L^2}+||z\overline{y}||_{L^1(0,\tau_1;L^2)} \\
\lesssim& ||u_0||_{L^2}+\tau_1^{\frac{3}{4}}||z||_{L^4(0,\tau_1;L^{\infty})}||y||_{L^{\infty}(0,\tau_1;L^2)}, \\
||\Psi(y,z)||_{\mathcal{X}_{\tau_1}}\lesssim&||v_0||_{L^2}+\tau_1^{\frac{3}{4}}||y||_{L^4(0,\tau_1;L^{\infty})}||y||_{L^{\infty}(0,\tau_1;L^2)}.
\end{align*}
Therefore,
\begin{align*}
||\mathcal{T}(y,z)||_{(\mathcal{X}_{\tau_1})^2}\lesssim& ||u_0||_{L^2}+||v_0||_{L^2}+2\tau_1^{\frac{3}{4}}M_1^{2}.
\end{align*}
We shall choose $M_1$ and $\tau_1$ to obtain $\mathcal{T}(E^{\tau_1}_{M_1})\subset E_{M_1}^{\tau_1}$ such that
\[ C_{\tau_1}(||u_0||_{L^2}+||v_0||_{L^2}+2\tau_1^{\frac{3}{4}}M_1^{2})\le M_1. \]
To this end, we define the real-valued continuous, $(\mathcal{F}_t)$-adapted process
\[ Z_t^{(1)}:=3C_t^{2}(||u_0||_{L^2}+||v_0||_{L^2})t^{\frac{3}{4}}, \]
choose the $(\mathcal{F}_t)$-stopping time as
\[ \tau_1:=\inf \left\{ t\in [0,T];Z_t^{(1)}>\frac{1}{3} \right\}\wedge T, \]
and set $M_1=3C_{\tau_1}(||u_0||_{L^2}+||v_0||_{L^2})$, where $a\wedge b:=\min\{a,b\}$. Then it follows that $Z_{\tau_1}^{(1)}\le \frac{1}{3}$ and $\mathcal{T}(E^{\tau_1}_{M_1})\subset E_{M_1}^{\tau_1}$.

Moreover, for $(y,z),(y',z')\in E^{\tau_1}_{M_1}$, we have
\begin{align*}
&||\Phi(y,z)-\Phi(y',z')||_{\mathcal{X}_{\tau_1}} \\
&\le C_{\tau_1}||z\overline{y}-z'\overline{y'}||_{L^1(0,\tau_1;L^2)} \\
&\le C_{\tau_1}\tau_1^{\frac{3}{4}} \left( ||z-z'||_{L^{\infty}(0,\tau_1;L^2)}||y||_{L^4(0,\tau_1;L^{\infty})}+||z'||_{L^4(0,\tau_1;L^{\infty})}||y-y'||_{L^{\infty}(0,\tau_1;L^2)} \right) \\
&\le C_{\tau_1}\tau_1^{\frac{3}{4}}(||y||_{L^4(0,\tau_1;L^{\infty})}+||z'||_{L^4(0,\tau_1;L^{\infty})})(||y-y'||_{L^{\infty}(0,\tau_1;L^2)}+||z-z'||_{L^{\infty}(0,\tau_1;L^2)}) \\
&\le 2C_{\tau_1}M_1\tau_1^{\frac{3}{4}} \left( ||y-y'||_{L^{\infty}(0,\tau_1;L^2)}+||z-z'||_{L^{\infty}(0,\tau_1;L^2)} \right) \\
&\le \frac{1}{3} \left( ||y-y'||_{L^{\infty}(0,\tau_1;L^2)}+||z-z'||_{L^{\infty}(0,\tau_1;L^2)} \right),
\end{align*}
\begin{align*}
||\Psi(y,z)-\Psi(y',z')||_{\mathcal{X}_{\tau_1}}\le& \frac{1}{3} \left( ||y-y'||_{L^{\infty}(0,\tau_1;L^2)}+||z-z'||_{L^{\infty}(0,\tau_1;L^2)} \right).
\end{align*}
Therefore,
\begin{align*}
||\mathcal{T}(y,z)-\mathcal{T}(y',z')||_{(\mathcal{X}_{\tau_1})^2}\le\frac{2}{3} \left( ||y-y'||_{L^{\infty}(0,\tau_1;L^2)}+||z-z'||_{L^{\infty}(0,\tau_1;L^2)} \right),
\end{align*}
which implies that $\mathcal{T}$ is a contraction mapping on $(\mathcal{X}_{\tau_1})^2$. 
Since $E_{M_1}^{\tau_1}$ is a complete metric subspace in $(\mathcal{X}_{\tau_1})^2$, Banach's fixed point theorem yields a unique $(y,z)\in E_{M_1}^{\tau_1}$ with $\mathcal{T}((y,z))=(y,z) \ \text{on} \ [0,\tau_1]$. 
Therefore, setting $y_1(t):=y(t\wedge \tau_1), \ z_1(t):=z(t\wedge \tau_1), \ t\in [0,T]$, we deduce that $(y_1,z_1,\tau_1)$ is a $L^2$-solution of (\ref{RSNLSS}), such that $y_1(t)=y_1(t\wedge \tau_1), \ z_1(t)=z_1(t\wedge \tau_1), \ t\in [0,T],$ and $y_1|_{[0.\tau_1]},z_1|_{[0,\tau_1]}\in \mathcal{X}_{\tau_1}.$

\textbf{Step 2.} We use an induction argument. Suppose that at the $n$-th step we have a $L^2$-solution $(y_n,z_n,\tau_n)$ of (\ref{RSNLSS}), such that $\tau_n\ge \tau_{n-1}, \ y_n(t)=y_n(t\wedge \tau_n), \ z_n(t)=z_n(t\wedge \tau_n), \ t\in[0,T],$ and $y_n|_{[0,\tau_n]},z_n|_{[0,\tau_n]}\in \mathcal{X}_{\tau_n}.$ We construct $(y_{n+1},z_{n+1},\tau_{n+1})$. Set
\begin{eqnarray*}
E^{\sigma_n}_{M_{n+1}}=\{ (y,z)\in (\mathcal{X}_{\sigma_n})^2;||y||_{\mathcal{X}_{\sigma_n}}+||z||_{\mathcal{X}_{\sigma_n}}\le M_{n+1} \},
\end{eqnarray*}
where $\sigma_n=\sigma_n(\omega)\in (0,T], \ M_{n+1}=M_{n+1}(\omega)>0$ are random variables. 
Then, define the map $\mathcal{T}_n$ by
\begin{align*}
\mathcal{T}_n(y,z)(t)=&\left( \Phi_n(y,z),\Psi_n(y,z)\right) \\
:=&\left( U_{\ell}(\tau_n+t,\tau_n)y_n(\tau_n)-i\int_0^tU_{\ell}(\tau_n+t,\tau_n+s)(\lambda\overline{e^{W(\tau_n+s)}}z(s)\overline{y(s)})ds, \right. \\
&\quad\left. U_L(\tau_n+t,\tau_n)z_n(\tau_n)-i\int_0^tU_L(\tau_n+t,\tau_n+s)(\kappa e^{W(\tau_n+s)}y^2(s))ds \right).
\end{align*}
Analogous calculations as in Step 1. show that for $(y,z)\in E^{\sigma_n}_{M_{n+1}}$
\begin{align*}
||\mathcal{T}_n(y,z)||_{(\mathcal{X}_{\sigma_n})^2}\lesssim& ||y_n(\tau_n)||_{L^2}+||z_n(\tau_n)||_{L^2}+2\sigma_n^{\frac{3}{4}}M_{n+1}^{2}.
\end{align*}
We shall choose $M_{n+1}$ and $\sigma_n$ to obtain $\mathcal{T}_n(E_{M_{n+1}}^{\sigma_n})\subset E_{M_{n+1}}^{\sigma_n}$ such that
\[ C_{\tau_n+\sigma_n}(||y_n(\tau_n)||_{L^2}+||z_n(\tau_n)||_{L^2}+2\sigma_n^{\frac{3}{4}}M_{n+1}^{2}) \le M_{n+1}. \]
To this end, we define the real-valued continuous, $(\mathcal{F}_{\tau_n+t})$-adapted process 
\[ Z_t^{(n+1)}:=3C_{\tau_n+t}^{2}(||y_n(\tau_n)||_{L^2}+||z_n(\tau_n)||_{L^2})t^{\frac{3}{4}}, \]
choose the $(\mathcal{F}_{\tau_n+t})$-stopping time as
\[ \sigma_n=\inf \left\{ t\in[0,T-\tau_n] ;Z_t^{(n+1)}>\frac{1}{3} \right\} \wedge (T-\tau_n), \]
and set $M_{n+1}=3C_{\tau_n+\sigma_n}(||y_n(\tau_n)||_{L^2}+||z_n(\tau_n)||_{L^2})$. Then it follows that $Z_{\sigma_n}^{(n+1)}\le \frac{1}{3}$ and $\mathcal{T}_n(E^{\sigma_n}_{M_{n+1}})\subset E^{\sigma_n}_{M_{n+1}}$. 

Moreover, for $(y,z),(y',z')\in E^{\sigma_n}_{M_{n+1}},$
\begin{align*}
||\mathcal{T}_n(y,z)-\mathcal{T}_n(y',z')||_{(\mathcal{X}_{\sigma_n})^2}\le\frac{2}{3} \left( ||y-y'||_{L^{\infty}(0,\sigma_n;L^2)}+||z-z'||_{L^{\infty}(0,\sigma_n;L^2)} \right).
\end{align*}
which implies that $\mathcal{T}_n$ is a contraction mapping on $(\mathcal{X}_{\sigma_n})^2$.

Set $\tau_{n+1}=\tau_n+\sigma_n$. Then, similarly to the proof of \cite[Lemma 4.2]{BRZ14}, we can show $\tau_{n+1}$ is an $\Fi$-stopping time. 
By Banach's fixed point theorem, there exists a unique $(w_{n+1},x_{n+1})\in E^{\sigma_n}_{M_{n+1}}$ satisfying $\mathcal{T}_n((w_{n+1},x_{n+1}))=(w_{n+1},x_{n+1})$ on $[0,\sigma_n]$. We define
\begin{eqnarray*}
y_{n+1}(t)=
\begin{cases}
y_n(t), & t\in [0,\tau_n]; \\
w_{n+1}((t-\tau_n)\wedge \sigma_n), & t\in (\tau_n,T],
\end{cases}
\\
z_{n+1}(t)=
\begin{cases}
z_n(t), & t\in [0,\tau_n]; \\
x_{n+1}((t-\tau_n)\wedge \sigma_n), & t\in (\tau_n,T].
\end{cases}
\end{eqnarray*}
It follows from the definition of $\mathcal{T}$ and $\mathcal{T}_n$ that $(y_{n+1},z_{n+1})=\mathcal{T}((y_{n+1},z_{n+1}))$ on $[0,\tau_{n+1}]$. 

And, similar to the proof of \cite[Lemma 6.2]{BRZ14}, $y_{n+1}$ and $z_{n+1}$ are an adapted to $\Fi$ in $L^2$. 
Hence, $(y_{n+1},z_{n+1},\tau_{n+1})$ is a $L^2$-solution of (\ref{RSNLSS}), such that $y_{n+1}(t)=y_{n+1}(t\wedge \tau_{n+1}), \ z_{n+1}(t)=z_{n+1}(t\wedge \tau_{n+1}), \ t\in[0,T]$, and $y_{n+1}|_{[0,\tau_{n+1}]},z_{n+1}|_{[0,\tau_{n+1}]}\in \mathcal{X}_{\tau_{n+1}}$. 
Starting from Step 1 and repeating the procedure in Step 2, we finally construct a sequence of $L^2$-solutions $(y_n,z_n,\tau_n), \ n\in \mathbb{N},$ where $\tau_n$ are increasing stopping times and $y_{n+1}=y_n, \ z_{n+1}=z_n \ \text{on} \ [0,\tau_n]$. 

To prove the uniqueness, for any two $L^2$-solutions $(\widetilde{y}_i,\widetilde{z}_i,\sigma_i), \ i=1,2,$ define $\iota=\sup \{ t\in[0,\sigma_1\wedge \sigma_2]:\widetilde{y}_1=\widetilde{y}_2, \ \widetilde{z}_1=\widetilde{z}_2 \ \text{on} \ [0,t] \}$. Suppose that $\Pro (\iota <\sigma_1\wedge \sigma_2)>0$. For $\omega \in \{ \iota<\sigma_1\wedge \sigma_2 \},$ we have $\widetilde{y}_1(\omega)=\widetilde{y}_2(\omega), \ \widetilde{z}_1(\omega)=\widetilde{z}_2(\omega)$ on $[0,\iota(\omega)]$ by the continuity in $L^2$, and for $t\in [0,\sigma_1\wedge \sigma_2(\omega)-\iota(\omega)),$
\begin{eqnarray*}
& & ||\widetilde{y}_1(\omega)-\widetilde{y}_2(\omega)||_{\mathcal{X}(\iota(\omega),\iota(\omega)+t)}+||\widetilde{z}_1(\omega)-\widetilde{z}_2(\omega)||_{\mathcal{X}(\iota(\omega),\iota(\omega)+t)} \\
& = & ||\Phi(\widetilde{y}_1(\omega))-\Phi(\widetilde{y}_2(\omega))||_{\mathcal{X}(\iota(\omega),\iota(\omega)+t)}+||\Psi(\widetilde{z}_1(\omega))-\Psi(\widetilde{z}_2(\omega))||_{\mathcal{X}(\iota(\omega),\iota(\omega)+t)} \\
& \lesssim & C_{\iota(\omega)+t}\widetilde{M}(t)t^{\theta}(||\widetilde{y}_1(\omega)-\widetilde{y}_2(\omega)||_{L^{\infty}(\iota(\omega),\iota(\omega)+t;L^2)}+||\widetilde{z}_1(\omega)-\widetilde{z}_2(\omega)||_{L^{\infty}(\iota(\omega),\iota(\omega)+t;L^2)}),
\end{eqnarray*}
where $\widetilde{M}(t):=||\widetilde{y}_1(\omega)||_{L^4(\iota(\omega),\iota(\omega)+t;L^{\infty})}+||\widetilde{y}_2(\omega)||_{L^4(\iota(\omega),\iota(\omega)+t;L^{\infty})}+||\widetilde{z}_1(\omega)||_{L^4(\iota(\omega),\iota(\omega)+t;L^{\infty})}+$\\
$||\widetilde{z}_2(\omega)||_{L^4(\iota(\omega),\iota(\omega)+t;L^{\infty})} \to 0$ as $t\to 0$. Therefore, with $t$ small enough we deduce that $\widetilde{y}_1(\omega)=\widetilde{y}_2(\omega),\widetilde{z}_1(\omega)=\widetilde{z}_2(\omega)$ on $[\iota(\omega),\iota(\omega)+t]$, hence $\widetilde{y}_1(\omega)=\widetilde{y}_2(\omega),\widetilde{z}_1(\omega)=\widetilde{z}_2(\omega)$ on $[0,\iota(\omega)+t]$, which contradicts the definition of $\iota$.

Next, we consider the case $d=2$. Then, the space $\mathcal{X}_t$ is $(C\cap L^{\infty})([0,t];L^2)\cap L^{q_0}(0,t;L^{r_0}),$ where $0<\frac{2}{q_0}=1-\frac{2}{r_0}<1$ with $r_0$ sufficiently large. We estimate $\Phi(y,z)$ and $\Psi(y,z)$ as
\begin{align*}
||\Phi(y,z)||_{\mathcal{X}_t}\lesssim& ||u_0||_{L^2}+||z\overline{y}||_{L^{q_0'}(0,t;L^{r_0'})}\lesssim ||u_0||_{L^2}+t^{\frac{1}{2}}||z||_{L^{r_0}(0,t;L^{\frac{2r_0}{r_0-2}})}||y||_{L^{\infty}(0,t;L^2)}, \\
&||\Psi(y,z)||_{\mathcal{X}_t}\lesssim||v_0||_{L^2}+t^{\frac{1}{2}}||y||_{L^{r_0}(0,t;L^{\frac{2r_0}{r_0-2}})}||y||_{L^{\infty}(0,t;L^2)}.
\end{align*}
Similarly,
\begin{align*}
&||\Phi(y,z)-\Phi(y',z')||_{\mathcal{X}_t} \\
\le& C_{t}t^{\frac{1}{2}}\left(||y||_{L^{r_0}(0,t;L^{\frac{2r_0}{r_0-2}})}+||z'||_{L^{r_0}(0,t;L^{\frac{2r_0}{r_0-2}})}\right)(||y-y'||_{L^{\infty}(0,t;L^2)}+||z-z'||_{L^{\infty}(0,t;L^2)}), \\
&||\Psi(y,z)-\Psi(y',z')||_{\mathcal{X}_t} \\
\le& C_{t}t^{\frac{1}{2}}\left(||y||_{L^{r_0}(0,t;L^{\frac{2r_0}{r_0-2}})}+||y'||_{L^{r_0}(0,t;L^{\frac{2r_0}{r_0-2}})}\right)(||y-y'||_{L^{\infty}(0,t;L^2)}+||z-z'||_{L^{\infty}(0,t;L^2)}).
\end{align*}
Note that
\begin{align*}
||y||_{L^{r_0}(0,t;L^{\frac{2r_0}{r_0-2}})}\le ||y||^{\frac{2}{r_0-2}}_{L^{q_0}(0,t;L^{r_0})}||y||^{\frac{r_0-4}{r_0-2}}_{L^{\infty}(0,t;L^2)}.
\end{align*}
Next, we consider the case $d=3$. Then, the space $\mathcal{X}_t$ is $(C\cap L^{\infty})([0,t];L^2)\cap L^2(0,t;L^{6})$. We estimate $\Phi(y,z)$ and $\Psi(y,z)$ as
\begin{align*}
||\Phi(y,z)||_{\mathcal{X}_t}\lesssim& ||u_0||_{L^2}+||z\overline{y}||_{L^{\frac{4}{3}}(0,t;L^{\frac{3}{2}})}\lesssim ||u_0||_{L^2}+t^{\frac{1}{4}}||z||_{L^{2}(0,t;L^{6})}||y||_{L^{\infty}(0,t;L^2)}, \\
&||\Psi(y,z)||_{\mathcal{X}_t}\lesssim||v_0||_{L^2}+t^{\frac{1}{4}}||y||_{L^{2}(0,t;L^{6})}||y||_{L^{\infty}(0,t;L^2)}.
\end{align*}
Similarly,
\begin{align*}
&||\Phi(y,z)-\Phi(y',z')||_{\mathcal{X}_t} \\
\le& C_{t}t^{\frac{1}{4}}(||y||_{L^{2}(0,t;L^{6})}+||z'||_{L^{2}(0,t;L^{6})})(||y-y'||_{L^{\infty}(0,t;L^2)}+||z-z'||_{L^{\infty}(0,t;L^2)}), \\
&||\Psi(y,z)-\Psi(y',z')||_{\mathcal{X}_t} \\
\le& C_{t}t^{\frac{1}{4}}(||y||_{L^{2}(0,t;L^{6})}+||y'||_{L^{2}(0,t;L^{6})})(||y-y'||_{L^{\infty}(0,t;L^2)}+||z-z'||_{L^{\infty}(0,t;L^2)}).
\end{align*}
Next, we consider the case $d=4$. Then, the space $\mathcal{X}_t$ is $(C\cap L^{\infty})([0,t];L^2)\cap L^2(0,t;L^{4})$. We estimate $\Phi(y,z)$ and $\Psi(y,z)$ in $L^2(0,t;L^{4})$ as
\begin{align*}
||\Phi(y,z)||_{L^2(0,t;L^{4})}\lesssim& ||U_{\ell}(\cdot,\cdot)u_0||_{L^2(0,t;L^{4})}+||z\overline{y}||_{L^{1}(0,t;L^{2})} \\
\lesssim& ||U_{\ell}(\cdot,\cdot)u_0||_{L^2(0,t;L^{4})}+||z||_{L^{2}(0,t;L^{4})}||y||_{L^{2}(0,t;L^4)}, \\
||\Psi(y,z)||_{L^2(0,t;L^{4})}\lesssim&||U_L(\cdot,\cdot)v_0||_{L^2(0,t;L^{4})}+||y||^2_{L^{2}(0,t;L^{4})}.
\end{align*}
Similarly,
\begin{align*}
&||\Phi(y,z)-\Phi(y',z')||_{L^2(0,t;L^{4})} \\
\le& C_{t}(||y||_{L^{2}(0,t;L^{4})}+||z'||_{L^{2}(0,t;L^{4})})(||y-y'||_{L^{2}(0,t;L^{4})}+||z-z'||_{L^{2}(0,t;L^{4})}), \\
&||\Psi(y,z)-\Psi(y',z')||_{L^2(0,t;L^{4})} \\
\le& C_{t}(||y||_{L^{2}(0,t;L^{4})}+||y'||_{L^{2}(0,t;L^{4})})(||y-y'||_{L^{2}(0,t;L^{4})}+||z-z'||_{L^{2}(0,t;L^{4})}).
\end{align*}
For $u_0,v_0$, we know that $U_{\ell}(\cdot,\cdot)u_0,U_{L}(\cdot,\cdot)v_0\in L^2(0,t;L^4)$, so by taking $T>0$ sufficiently small, the associated norm can be arbitrarily small. Thus, the argument of the contraction mapping works on a closed ball in $L^2(0,t;L^4)$ centered at the origin and of a sufficiently small radius. Then the solution satisfies the integral equations $\mathcal{T}$ and belongs to $\mathcal{X}_t$ by Strichartz estimates.

Finally for $1\le d\le3$, we prove the blowup alternative. Suppose that 
\[\Pro(M^*<\infty;\tau_n<\tau^*(u_0,v_0),\forall n\in\mathbb{N})>0,\] 
where
\[ M^*:=\sup_{t\in[0,\tau^*(u_0,v_0))}(||y(t)||_{L^2}+||z(t)||_{L^2}).\] 
Define
\begin{align*}
Z_t:&=3C^2_{T+t}M^*t^{\frac{3}{4}}, \\
\sigma:&=\inf\{t\in[0,T];Z_t>\frac{1}{6}\}\wedge T.
\end{align*}
For $\omega\in\{M^*<\infty;\tau_n<\tau^*(u_0,v_0),\forall n\in\mathbb{N}\}$, since $\tau_n(\omega)<T, \ \forall n\in \mathbb{N},$ by the definition of $\sigma_n$ in Step 2, we have
\[ \sigma_n(\omega)=\inf \left\{ t\in[0,T-\tau_n(\omega)] ;Z_t^{(n+1)}(\omega)>\frac{1}{3} \right\}. \]
On the other hand, since $||y_n(\tau_n)||_{L^2}+||z_n(\tau_n)||_{L^2}\le M^*$ and $C_{\tau_n+t}\le C_{T+t}$ for any $n\ge1$, $Z_t\ge Z_t^{(n+1)}$ holds. Therefore $\sigma_n(\omega)>\sigma(\omega)>0.$ Hence $\tau_{n+1}(\omega)=\tau_n(\omega)+\sigma_n(\omega)>\tau_n(\omega)+\sigma(\omega)$. This implies $\tau_{n+1}(\omega)>\tau_1(\omega)+n\sigma(\omega)$ for every $n\ge 1$. Therefore, after a finite number of iterations $\tau_n(\omega)$ exceeds $T$. This contradicts $\tau_n(\omega)\le T$. This completes the proof.
\end{proof}

\section{Local existence of $H^1$-solutions}

For any $u_0,v_0\in H^1$, we solve (\ref{SNLSS}) in the spaces
\begin{align}
\label{solspY}
\mathcal{Y}_t:=
\begin{cases}
(C\cap L^{\infty})([0,t];H^1)\cap L^4(0,t;W^{1,\infty}), \quad d=1, \\
(C\cap L^{\infty})([0,t];H^1)\cap L^{q_0}(0,t;W^{1,r_0}), \quad d=2, \\
(C\cap L^{\infty})([0,t];H^1)\cap L^2(0,t;W^{1,\frac{2d}{d-2}}), \quad d=3,4,5,6,
\end{cases}
\end{align}
where $0<\frac{2}{q_0}=1-\frac{2}{r_0}<1$ with $r_0$ sufficiently large.
\begin{theorem}
\label{mainH}
Assume (H1$)_1$ and $1\le d\le 6$. Then, for each $u_0,v_0\in H^1$ and $0<T<\infty$, there exists a sequence of $H^1$-local solutions $(u_n,v_n,\tau_n)$ of \textnormal{(\ref{SNLSS})}, $n\in \mathbb{N}$ where $\tau_n$ is a sequence of increasing stopping times. For every $n\ge 1$, it holds $\Pas$ that
\begin{align*}
u_n|_{[0,\tau_n]},v_n|_{[0,\tau_n]}\in \mathcal{Y}_{\tau_n},
\end{align*}
and uniqueness holds in the function space 
$\mathcal{Y}_{\tau_n}$. Moreover for $1\le d\le 5$, defining $\displaystyle \tau^*(u_0,v_0)=\lim_{n\to \infty}\tau_n, \ u=\lim_{n\to \infty}u_n\mathbf{1}_{[0,\tau^*(u_0,v_0))}$ and $\displaystyle v=\lim_{n\to \infty}v_n\mathbf{1}_{[0,\tau^*(u_0,v_0))},$ we have the blowup alternative, that is, for $\Pas \ \omega$, if $\tau_n(\omega)<\tau^*(u_0,v_0)(\omega),$ for every $n\in \mathbb{N}$, then
\begin{align*}
\lim_{t\to \tau^*(u_0,v_0)(\omega)}(||u(t)(\omega)||_{H^1}&+||v(t)(\omega)||_{H^1})=\infty.
\end{align*}
\end{theorem}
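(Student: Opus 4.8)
The plan is to mirror the proof of Theorem~\ref{yzmain}: work with the rescaled system (\ref{RSNLSS}), produce the solution there, and transfer back via Theorem~\ref{uvyz}, but now at $H^1$ regularity. Fix $\omega\in\Omega$ and seek a fixed point of the map $\mathcal{T}=(\Phi,\Psi)$ of (\ref{tauCM}) on a ball in $(\mathcal{Y}_\tau)^2$. The Strichartz estimates of Lemma~\ref{yzSE} with $s=1$ give, for a Strichartz pair $(p_1,q_1)$ defining the $\mathcal{Y}_t$-norm and the dual $(p'_2,q'_2)$ of a Strichartz pair,
\[
\|\Phi(y,z)\|_{\mathcal{Y}_t}\lesssim C_t\bigl(\|u_0\|_{H^1}+\|z\overline{y}\|_{L^{q'_2}(0,t;W^{1,p'_2})}\bigr),\qquad
\|\Psi(y,z)\|_{\mathcal{Y}_t}\lesssim C_t\bigl(\|v_0\|_{H^1}+\|y^2\|_{L^{q'_2}(0,t;W^{1,p'_2})}\bigr);
\]
here the exponential weights $\overline{e^{W}},e^{W}$ are, for fixed $(t,\omega)$, smooth and bounded in $\xi$ together with their derivatives by (H1$)_1$, hence harmless as multipliers on $W^{1,p'_2}$. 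The quadratic terms are handled by the Leibniz rule and H\"older in $\xi$, e.g. $\|z\overline{y}\|_{W^{1,p'_2}}\lesssim\|z\|_{W^{1,\rho_1}}\|y\|_{L^{\rho_2}}+\|z\|_{L^{\rho_2}}\|y\|_{W^{1,\rho_1}}$ for suitable $\rho_1,\rho_2$, after which a Sobolev embedding $W^{1,\rho_1}\hookrightarrow L^{\rho_2}$ (available in the stated dimension range) reduces each term to a product of two $\mathcal{Y}_t$-controlled quantities, and likewise for $y^2$; H\"older in time then produces a factor $t^{\theta}$ exactly as in the $L^2$ case.

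For $1\le d\le 3$ the positive power $t^\theta$ lets us run the contraction on $E^{\tau_1}_{M_1}=\{(y,z):\|y\|_{\mathcal{Y}_{\tau_1}}+\|z\|_{\mathcal{Y}_{\tau_1}}\le M_1\}$ with $M_1=M_1(\omega)$ comparable to $\|u_0\|_{H^1}+\|v_0\|_{H^1}$ and with the $(\mathcal{F}_t)$-stopping time $\tau_1=\inf\{t:Z^{(1)}_t>1/3\}\wedge T$ for the continuous adapted process $Z^{(1)}_t$ built from $C_t$, $\|u_0\|_{H^1}+\|v_0\|_{H^1}$ and $t^\theta$, exactly as in Step~1 of the proof of Theorem~\ref{yzmain}; then $\mathcal{T}(E^{\tau_1}_{M_1})\subset E^{\tau_1}_{M_1}$ and $\mathcal{T}$ is a $2/3$-contraction, so Banach's fixed point theorem yields $(y_1,z_1,\tau_1)$. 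For $4\le d\le6$ the scaling is such that no positive power of $t$ survives; instead, as in the $d=4$ case of Theorem~\ref{yzmain}, one uses that $U_\ell(\cdot,0)u_0,U_L(\cdot,0)v_0\in L^2(0,t;W^{1,2d/(d-2)})$ with norm tending to $0$ as $t\to0$, and runs the contraction on a small ball centered at the origin in the Strichartz component of $\mathcal{Y}_t$, choosing $\tau_1$ small enough — again through a continuous adapted stopping-time construction — that the free-evolution norm and the contraction constant are both small; the fixed point then lies in $\mathcal{Y}_{\tau_1}$ by the Strichartz estimates.

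The remaining steps are identical in form to the $L^2$ proof. Step~2 is the induction: given $(y_n,z_n,\tau_n)$, restart the argument on $[0,T-\tau_n]$ with the map $\mathcal{T}_n$ using $U_\ell(\tau_n+\cdot,\tau_n)y_n(\tau_n)$ and $U_L(\tau_n+\cdot,\tau_n)z_n(\tau_n)$ as linear parts, choose $\sigma_n$ as an $(\mathcal{F}_{\tau_n+t})$-stopping time built from $\|y_n(\tau_n)\|_{H^1}+\|z_n(\tau_n)\|_{H^1}$, set $\tau_{n+1}=\tau_n+\sigma_n$, and glue; adaptedness of $\tau_{n+1},y_{n+1},z_{n+1}$ follows as in \cite[Lemmas~4.2,~6.2]{BRZ14}. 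Uniqueness in $\mathcal{Y}_{\tau_n}$ is the same $\iota=\sup\{t:\,\widetilde y_1=\widetilde y_2,\ \widetilde z_1=\widetilde z_2\ \text{on}\ [0,t]\}$ argument, using that the relevant nonlinear Strichartz norm of the two solutions over a shrinking interval tends to $0$. Finally, for $1\le d\le5$ the blowup alternative is obtained by contradiction exactly as before: if $M^*:=\sup_{t<\tau^*(u_0,v_0)}(\|y(t)\|_{H^1}+\|z(t)\|_{H^1})<\infty$ on a positive-probability event, then the $\sigma_n$ are bounded below by a fixed $\sigma(\omega)>0$, so $\tau_n$ exceeds $T$ after finitely many steps, contradicting $\tau_n\le T$.

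The main obstacle I expect is the exponent bookkeeping for the quadratic terms at $H^1$ regularity: one must verify, in each dimension $1\le d\le6$, that there is a choice of Strichartz pair and of intermediate exponents $\rho_1,\rho_2$ for which the Leibniz-plus-Sobolev estimates of $\|z\overline{y}\|_{W^{1,p'_2}}$ and $\|y^2\|_{W^{1,p'_2}}$ close — including the borderline dimensions $d=5,6$, where $\mathcal{Y}_t$ only controls $L^2(0,t;W^{1,2d/(d-2)})$ — and, correspondingly, that the free-evolution-small-ball argument genuinely yields a contraction there. No essentially new difficulty beyond the single-equation case \cite{BRZ14,BRZ16} arises at the local level, since the interaction term $z\overline{y}$ has the same bilinear structure as $y^2$; the new phenomena tied to non-conservation of mass and energy only appear in the global theory of Sections~4 and~5.
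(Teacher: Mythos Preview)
Your overall architecture is exactly the paper's: pass to the rescaled system, run a Banach fixed-point argument in $(\mathcal{Y}_\tau)^2$ via the Strichartz estimates of Lemma~\ref{yzSE} with $s=1$, iterate to build $(\tau_n)$, and prove the blowup alternative by the $\sigma_n\ge\sigma>0$ contradiction. That part is fine.

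There is, however, a concrete error in your dimensional classification. You assert that ``for $4\le d\le 6$ the scaling is such that no positive power of $t$ survives'' and therefore propose to treat $d=4,5$ by the critical free-evolution-small-ball trick. This is wrong: the quadratic nonlinearity is $H^1$-critical only at $d=6$. For $d=4,5$ the paper chooses the Strichartz pair $(p,q)=(\tfrac{d}{2},\tfrac{4}{d-4})$ with dual $(p',q')=(\tfrac{d}{d-2},\tfrac{4}{8-d})$, estimates for instance
\[
\|\nabla(z\overline y)\|_{L^{\frac{4}{8-d}}(0,t;L^{\frac{d}{d-2}})}
\lesssim t^{\frac{3}{2}-\frac{d}{4}}\Bigl(\|\nabla z\|_{L^{2}(0,t;L^{\frac{2d}{d-2}})}\|y\|_{L^{\infty}(0,t;L^{\frac{2d}{d-2}})}+\|z\|_{L^{2}(0,t;L^{\frac{2d}{d-2}})}\|\nabla y\|_{L^{\infty}(0,t;L^{\frac{2d}{d-2}})}\Bigr),
\]
and then uses $H^1\hookrightarrow L^{2d/(d-2)}$. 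The exponent $\tfrac{3}{2}-\tfrac{d}{4}$ is strictly positive for $d=4,5$, so these dimensions are genuinely subcritical at $H^1$; only $d=6$ requires the small-free-evolution argument.

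This misclassification breaks your blowup-alternative step for $d=4,5$. In the critical-case construction the local lifespan $\sigma_n$ is determined by how quickly $\|U_\ell(\tau_n+\cdot,\tau_n)y_n(\tau_n)\|_{L^2(0,\cdot;W^{1,2d/(d-2)})}$ becomes small, which depends on the \emph{profile} of $y_n(\tau_n)$, not merely on $\|y_n(\tau_n)\|_{H^1}$. Hence a bound $M^*=\sup_{t<\tau^*}(\|y(t)\|_{H^1}+\|z(t)\|_{H^1})<\infty$ does \emph{not} give a uniform lower bound $\sigma_n\ge\sigma(\omega)>0$, and your contradiction argument collapses. (This is exactly why the $L^2$ blowup alternative is stated only for $d\le3$, excluding the $L^2$-critical $d=4$.) To make the argument work for $d=4,5$ you must use the subcritical estimate above, so that $\sigma_n$ is chosen via a process $Z^{(n+1)}_t\sim C_{\tau_n+t}^2(\|y_n(\tau_n)\|_{H^1}+\|z_n(\tau_n)\|_{H^1})\,t^{\,3/2-d/4}$, which \emph{is} controlled by $M^*$.
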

By the equivalence of two expressions of solutions via the rescaling transformations (\ref{21}) and (\ref{22}),
Theorem \ref{mainH} is rewritten as Theorem \ref{ymainH}.
\begin{theorem}
\label{ymainH}
Assume (H1$)_1$, and $1\le d\le 6$. Then, for each $u_0,v_0\in H^1$ and $0<T<\infty$, there exists a sequence of $H^1$-local solutions $(y_n,z_n,\tau_n)$ of \textnormal{(\ref{RSNLSS})}, $n\in \mathbb{N}$ where $\tau_n$ is a sequence of increasing stopping times. For every $n\ge 1$, it holds $\Pas$ that
\begin{align*}
y_n|_{[0,\tau_n]},z_n|_{[0,\tau_n]}\in \mathcal{Y}_{\tau_n},
\end{align*}
and uniqueness holds in the function space 
$\mathcal{Y}_{\tau_n}$. Moreover for $1\le d\le 5$, defining $\displaystyle \tau^*(u_0,v_0)=\lim_{n\to \infty}\tau_n, \ y=\lim_{n\to \infty}y_n\mathbf{1}_{[0,\tau^*(u_0,v_0))}$ and $\displaystyle z=\lim_{n\to \infty}z_n\mathbf{1}_{[0,\tau^*(u_0,v_0))},$ we have the blowup alternative, that is, for $\Pas \ \omega$, if $\tau_n(\omega)<\tau^*(u_0,v_0)(\omega),$ for every $n\in \mathbb{N}$, then
\begin{align*}
\lim_{t\to \tau^*(u_0,v_0)(\omega)}(||y(t)(\omega)||_{H^1}&+||z(t)(\omega)||_{H^1})=\infty.
\end{align*}
\end{theorem}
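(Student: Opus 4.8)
The plan is to run the same Banach fixed-point and stopping-time scheme used to prove Theorem \ref{yzmain}, only shifted up by one Sobolev derivative. One solves the mild equations \eqref{mild1}--\eqref{mild2} with the map $\mathcal{T}(y,z)=(\Phi(y,z),\Psi(y,z))$ on balls of $(\mathcal{Y}_\tau)^2$, invoking Lemma \ref{yzSE} with $s=1$, i.e.\ the $W^{1,p}$-Strichartz estimates. A convenient point is that these estimates already incorporate the first-order terms $b_\ell\cdot\nabla,b_L\cdot\nabla$ and the zeroth-order $c_\ell,c_L$ produced by the rescaling, so there is \emph{no} commutator between $\nabla$ and the evolution operators $U_\ell(t,r),U_L(t,r)$ to analyse; moreover the weights $e^{W}$, $\overline{e^{W}}$ are, $\Pas$, bounded multipliers on $W^{1,p}$ uniformly on $[0,T]$ by (H1$)_1$. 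Hence the $H^1$ theory is extracted from Lemma \ref{yzSE} precisely as the $L^2$ theory was from the same lemma with $s=0$: the construction of the random radii $M_n=M_n(\omega)$ and stopping times $\tau_n$ via the processes $Z^{(n)}_t$, the induction step building $(y_{n+1},z_{n+1},\tau_{n+1})$ from the shifted map $\mathcal{T}_n$, the adaptedness claims (cf.\ \cite[Lemmas 4.2 and 6.2]{BRZ14}), the uniqueness argument, and the comparison argument for the blowup alternative are all word for word the same. Only the nonlinear estimates change.

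\textbf{Nonlinear estimates in $H^1$.} The one genuinely new point is to bound $z\overline{y}$ and $y^2$ --- equivalently, by the Leibniz rule, the terms $(\nabla z)\overline{y}$, $z\,\nabla\overline{y}$, $(\nabla y)y$ --- in the appropriate dual-Strichartz norm. For $1\le d\le 5$ I would, dimension by dimension, mirror the pairs used in the proof of Theorem \ref{yzmain}, place the nonlinearity in $L^\infty$ in time and in the smallest admissible Lebesgue exponent in space (using $\nabla z\in L^\infty(0,t;L^2)$ together with the Sobolev embedding $H^1(\mathbb{R}^d)\hookrightarrow L^{2d/(d-2)}(\mathbb{R}^d)$ for $d\ge3$, or $H^1\hookrightarrow L^\infty,L^{r_0}$ for $d=1,2$), and then pay a strictly positive power of the interval length --- $t^{(6-d)/4}$ for $1\le d\le 5$ --- by H\"older in time to reach the required dual-Strichartz space. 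This is exactly the $L^2$-case mechanism, whose gains $t^{(4-d)/4}$ become $t^{(6-d)/4}$ after shifting one dimension (as expected, since the quadratic interaction is $L^2$-critical at $d=4$ and $\dot H^1$-critical at $d=6$). With this gain in hand, $\mathcal{T}$ (resp.\ $\mathcal{T}_n$) is a contraction on a small random interval fixed by a process $Z^{(1)}_t$ (resp.\ $Z^{(n+1)}_t$) exactly as in Step 1 and Step 2, and everything else follows.

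\textbf{The critical endpoint $d=6$.} For $d=6$ the quadratic interaction is $\dot H^1$-critical ($s_c=\tfrac d2-2=1$), the time gain degenerates to $t^0$, and the construction must be modified exactly as the $L^2$-critical case $d=4$ was. Here one runs the fixed point in the single auxiliary norm $L^2(0,T;W^{1,3})$ (note $\tfrac{2d}{d-2}=3$), using the estimate
\[ \|z\overline{y}\|_{L^1(0,T;H^1)}\lesssim \|z\|_{L^2(0,T;W^{1,3})}\,\|y\|_{L^2(0,T;W^{1,3})}, \]
which follows from the Leibniz rule, H\"older in space ($\tfrac12=\tfrac16+\tfrac13$), the embedding $W^{1,3}(\mathbb{R}^6)\hookrightarrow L^6(\mathbb{R}^6)$, and Cauchy--Schwarz in time, and where $d\le6$ is precisely the constraint that makes this embedding usable. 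Since $U_\ell(\cdot,0)u_0,U_L(\cdot,0)v_0\in L^2(0,T;W^{1,3})$, shrinking $T$ renders the free-evolution norm --- hence the radius of the ball on which $\mathcal{T}$ contracts --- arbitrarily small; membership in $(C\cap L^\infty)([0,T];H^1)\cap L^2(0,T;W^{1,3})=\mathcal{Y}_T$ is then recovered from Lemma \ref{yzSE}. Because the resulting existence time depends on the profile of $(u_0,v_0)$ and not merely on $\|u_0\|_{H^1}+\|v_0\|_{H^1}$, the blowup alternative is asserted only for $1\le d\le 5$, exactly as in the statement; over that range it follows from the comparison argument with a process of the form $Z_t=3C^2_{T+t}M^*\,t^{\theta}$ as at the end of the proof of Theorem \ref{yzmain}.

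\textbf{Main obstacle.} I expect the delicate step to be the $H^1$ control of the derivative of the \emph{interaction} term $z\overline{y}$ (and of $y^2$) in the correct dual-Strichartz norm, uniformly in $\omega\in\Omega$: this is where the dimensional ceiling $d\le6$ is forced, through $H^1\hookrightarrow L^{2d/(d-2)}$, and where one must also check that the random constants $C_t$ from Lemma \ref{yzSE} with $s=1$ still give increasing, $(\mathcal{F}_t)$-adapted, continuous processes so that the $\tau_n$ are well-defined stopping times. The $d=6$ endpoint --- lacking a time gain --- is the single place where the argument genuinely departs from the $L^2$ template, forcing a data-dependent rather than time-dependent fixed point, and it is exactly this feature that costs one dimension in the blowup alternative relative to the local-existence range.
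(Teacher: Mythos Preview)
Your architecture --- fixed point on balls in $(\mathcal{Y}_\tau)^2$ via Lemma \ref{yzSE} with $s=1$, stopping times through the processes $Z^{(n)}_t$, induction, uniqueness, and the blowup alternative by comparison --- matches the paper exactly, as does your treatment of the critical endpoint $d=6$.

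The gap is in your nonlinear estimate at $d=5$. Your stated scheme places $\nabla z\in L^\infty(0,t;L^2)$ and $y\in L^\infty(0,t;L^{2d/(d-2)})$, hence $(\nabla z)\,\overline{y}\in L^\infty(0,t;L^{d/(d-1)})$, and then pays H\"older in time. For $d=5$ this forces spatial exponent $p'_2=5/4$; but the admissible dual Strichartz range in dimension $5$ is $p'_2\in[10/7,2]$ (since $p_2\le 2d/(d-2)=10/3$), and $5/4<10/7$. There is no dual pair to land in, so H\"older in time alone cannot close the estimate. Your $L^\infty_t$-only scheme does work for $d\le 4$ (at $d=4$ just barely, hitting the endpoint $p_2=4$), and incidentally your heuristic $t^{(6-d)/4}$ is also off for $d=1$, where $q'_2\ge 1$ caps the gain at $t^1$.

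The paper's fix for $d=4,5$ is to spend the \emph{Strichartz} part of $\mathcal{Y}_t$, not the energy norm, on the gradient factor: from $\nabla z\in L^2(0,t;L^{2d/(d-2)})$ and $y\in L^\infty(0,t;L^{2d/(d-2)})$ (the latter by $H^1\hookrightarrow L^{2d/(d-2)}$) one obtains $(\nabla z)\,\overline{y}\in L^2(0,t;L^{d/(d-2)})$, and then H\"older in time to $L^{4/(8-d)}(0,t;L^{d/(d-2)})$ --- the dual of the new pair $(p_2,q_2)=(d/2,\,4/(d-4))$ --- yields exactly the gain $t^{(6-d)/4}$ you announced. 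Note this pair is \emph{not} one appearing in the $L^2$ proof (which stops at $d=4$), so ``mirror the pairs'' is not quite enough either; for $d=1,2,3$ the paper likewise keeps one factor in the auxiliary $L^q_t W^{1,p}_x$ norm rather than in $L^\infty_t H^1$, so your ``mirror'' instinct is in fact closer to what the paper does than your explicit $L^\infty_t$ recipe.
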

\begin{proof}
The proof is similar to the $L^2$-solutions case.
We solve \eqref{mild1} and \eqref{mild2}.
We consider \eqref{tauCM}.

First, we consider the case $d=1$. Then, the space $\mathcal{Y}_t$ is $(C\cap L^{\infty})([0,t];H^1)\cap L^4(0,t;W^{1,\infty})$. We estimate $\Phi(y,z)$ and $\Psi(y,z)$ as
\begin{align*}
||\Phi(y,z)||_{\mathcal{Y}_t}\lesssim& \ ||u_0||_{H^1}+t^{\frac{3}{4}}||z||_{L^{4}(0,t;W^{1,\infty})}||y||_{L^{\infty}(0,t;H^1)}, \\
||\Psi(y,z)||_{\mathcal{Y}_t}\lesssim& \ ||v_0||_{H^1}+t^{\frac{3}{4}}||y||_{L^{4}(0,t;W^{1,\infty})}||y||_{L^{\infty}(0,t;H^1)}.
\end{align*}
Similarly,
\begin{align*}
&||\Phi(y,z)-\Phi(y',z')||_{\mathcal{Y}_t} \\
\le& C_{t}t^{\frac{3}{4}}(||y||_{L^{4}(0,t;W^{1,\infty})}+||z'||_{L^{4}(0,t;W^{1,\infty})})(||y-y'||_{L^{\infty}(0,t;H^1)}+||z-z'||_{L^{\infty}(0,t;H^1)}), \\
&||\Psi(y,z)-\Psi(y',z')||_{\mathcal{Y}_t} \\
\le& C_{t}t^{\frac{3}{4}}(||y||_{L^{4}(0,t;W^{1,\infty})}+||y'||_{L^{4}(0,t;W^{1,\infty})})(||y-y'||_{L^{\infty}(0,t;H^1)}+||z-z'||_{L^{\infty}(0,t;H^1)}).
\end{align*}
Next, we consider the case $d=2$. Then, the space $\mathcal{Y}_t$ is $(C\cap L^{\infty})([0,t];H^1)\cap L^{q_0}(0,t;W^{1,r_0}),$ where $0<\frac{2}{q_0}=1-\frac{2}{r_0}<1$ with $r_0$ sufficiently large. We estimate $\Phi(y,z)$ and $\Psi(y,z)$ as
\begin{align*}
||\Phi(y,z)||_{\mathcal{Y}_t}\lesssim& \ ||u_0||_{H^1}+t^{\frac{r_0}{r_0+2}}||z||_{L^{r_0}(0,t;W^{1,\frac{2r_0}{r_0-2}})}||y||_{L^{\infty}(0,t;H^1)}, \\
||\Psi(y,z)||_{\mathcal{Y}_t}\lesssim& \ ||v_0||_{H^1}+t^{\frac{r_0}{r_0+2}}||y||_{L^{r_0}(0,t;W^{1,\frac{2r_0}{r_0-2}})}||y||_{L^{\infty}(0,t;H^1)}.
\end{align*}
Similarly,
\begin{align*}
&||\Phi(y,z)-\Phi(y',z')||_{\mathcal{Y}_t} \\
\le& C_{t}t^{\frac{r_0}{r_0+2}}(||y||_{L^{r_0}(0,t;W^{1,\frac{2r_0}{r_0-2}})}+||z'||_{L^{r_0}(0,t;W^{1,\frac{2r_0}{r_0-2}})})(||y-y'||_{L^{\infty}(0,t;H^1)}+||z-z'||_{L^{\infty}(0,t;H^1)}), \\
&||\Psi(y,z)-\Psi(y',z')||_{\mathcal{Y}_t} \\
\le& C_{t}t^{\frac{r_0}{r_0+2}}(||y||_{L^{r_0}(0,t;W^{1,\frac{2r_0}{r_0-2}})}+||y'||_{L^{r_0}(0,t;W^{1,\frac{2r_0}{r_0-2}})})(||y-y'||_{L^{\infty}(0,t;H^1)}+||z-z'||_{L^{\infty}(0,t;H^1)}).
\end{align*}
Note that
\begin{align*}
||y||_{L^{r_0}(0,t;W^{1,\frac{2r_0}{r_0-2}})}\le ||y||^{\frac{2}{r_0-2}}_{L^{q_0}(0,t;W^{1,r_0})}||y||^{\frac{r_0-4}{r_0-2}}_{L^{\infty}(0,t;H^1)}.
\end{align*}
Next, we consider the case $d=3$. Then, the space $\mathcal{Y}_t$ is $(C\cap L^{\infty})([0,t];H^1)\cap L^2(0,t;W^{1,6})$. We estimate $\Phi(y,z)$ and $\Psi(y,z)$ as
\begin{align*}
||\Phi(y,z)||_{\mathcal{Y}_t}\lesssim& ||u_0||_{H^1}+t^{\frac{1}{4}}||z||_{L^{2}(0,t;W^{1,6})}||y||_{L^{\infty}(0,t;H^1)}, \\
||\Psi(y,z)||_{\mathcal{Y}_t}\lesssim& \ ||v_0||_{H^1}+t^{\frac{1}{4}}||y||_{L^{2}(0,t;W^{1,6})}||y||_{L^{\infty}(0,t;H^1)}.
\end{align*}
Similarly,
\begin{align*}
&||\Phi(y,z)-\Phi(y',z')||_{\mathcal{Y}_t} \\
\le& C_{t}t^{\frac{1}{4}}(||y||_{L^{2}(0,t;W^{1,6})}+||z'||_{L^{2}(0,t;W^{1,6})})(||y-y'||_{L^{\infty}(0,t;H^1)}+||z-z'||_{L^{\infty}(0,t;H^1)}), \\
&||\Psi(y,z)-\Psi(y',z')||_{\mathcal{Y}_t} \\
\le& C_{t}t^{\frac{1}{4}}(||y||_{L^{2}(0,t;W^{1,6})}+||y'||_{L^{2}(0,t;W^{1,6})})(||y-y'||_{L^{\infty}(0,t;H^1)}+||z-z'||_{L^{\infty}(0,t;H^1)}).
\end{align*}
Next, we consider the case $d=4,5$. Then, the space $\mathcal{Y}_t$ is $(C\cap L^{\infty})([0,t];H^1)\cap L^2(0,t;W^{1,\frac{2d}{d-2}})$. Choose the Strichartz pair $(p,q)=(\frac{d}{2},\frac{4}{d-4})$ and the corresponding dual is given by $(p',q')=(\frac{d}{d-2},\frac{4}{8-d})$. We estimate $\Phi(y,z)$ as 
\begin{align*}
||\Phi(y,z)||_{\mathcal{Y}_t}\lesssim& ||u_0||_{H^1}+||z\overline{y}||_{L^{\frac{4}{8-d}}(0,t;W^{1,\frac{d}{d-2}})}.
\end{align*}
By the estimate
\begin{align*}
&||\nabla(z\overline{y})||_{L^{\frac{4}{8-d}}(0,t;L^{\frac{d}{d-2}})}\\
\lesssim& ||\nabla z||_{L^{\frac{4}{8-d}}(0,t;L^{\frac{2d}{d-2}})}||y||_{L^{\infty}(0,t;L^{\frac{2d}{d-2}})}+||z||_{L^{\frac{4}{8-d}}(0,t;L^{\frac{2d}{d-2}})}||\nabla y||_{L^{\infty}(0,t;L^{\frac{2d}{d-2}})}\\
\lesssim& t^{\frac{3}{2}-\frac{d}{4}}\left\{ ||\nabla z||_{L^{2}(0,t;L^{\frac{2d}{d-2}})}||y||_{L^{\infty}(0,t;L^{\frac{2d}{d-2}})}+||z||_{L^{2}(0,t;L^{\frac{2d}{d-2}})}||\nabla y||_{L^{\infty}(0,t;L^{\frac{2d}{d-2}})} \right\},
\end{align*}
we have
\begin{align*}
||\Phi(y,z)||_{\mathcal{Y}_t}\lesssim& ||u_0||_{H^1}+t^{\frac{3}{2}-\frac{d}{4}}||y||_{L^{\infty}(0,t;H^1)}||z||_{L^{2}(0,t;W^{1,\frac{2d}{d-2}})}.
\end{align*}
Note that $H^1\hookrightarrow L^{\frac{2d}{d-2}}$. We estimate $\Psi(y,z)$ as 
\begin{align*}
||\Psi(y,z)||_{\mathcal{Y}_t}\lesssim||v_0||_{H^1}+t^{\frac{3}{2}-\frac{d}{4}}||y||_{L^{\infty}(0,t;H^1)}||y||_{L^{2}(0,t;W^{1,\frac{2d}{d-2}})}.
\end{align*}
Similarly,
\begin{align*}
&||\Phi(y,z)-\Phi(y',z')||_{\mathcal{Y}_t} \\
\le& C_{t}||z\overline{y}-z'\overline{y'}||_{L^{\frac{4}{8-d}}(0,t;W^{1,\frac{d}{d-2}})} \\
\le& C_{t}t^{\frac{3}{2}-\frac{d}{4}} \left( ||z-z'||_{L^{\infty}(0,t;H^1)}||y||_{L^{2}(0,t;W^{1,\frac{2d}{d-2}})}+||z'||_{L^{2}(0,t;W^{1,\frac{2d}{d-2}})}||y-y'||_{L^{\infty}(0,t;H^1)} \right) \\
\le& C_{t}t^{\frac{3}{2}-\frac{d}{4}}(||y||_{L^{2}(0,t;W^{1,\frac{2d}{d-2}})}+||z'||_{L^{2}(0,t;W^{1,\frac{2d}{d-2}})})(||y-y'||_{L^{\infty}(0,t;H^1)}+||z-z'||_{L^{\infty}(0,t;H^1)}),
\end{align*}
\begin{align*}
&||\Psi(y,z)-\Psi(y',z')||_{\mathcal{Y}_t} \\
\le& C_{t}t^{\frac{3}{2}-\frac{d}{4}}(||y||_{L^{2}(0,t;W^{1,\frac{2d}{d-2}})}+||y'||_{L^{2}(0,t;W^{1,\frac{2d}{d-2}})})(||y-y'||_{L^{\infty}(0,t;H^1)}+||z-z'||_{L^{\infty}(0,t;H^1)}).
\end{align*}
Next, we consider the case $d=6$. Then, the space $\mathcal{Y}_t$ is $(C\cap L^{\infty})([0,t];H^1)\cap L^2(0,t;W^{1,3})$. We estimate $\Phi(y,z)$ and $\Psi(y,z)$ in $L^2(0,t;W^{1,3})$ as
\begin{align*}
||\Phi(y,z)||_{L^2(0,t;W^{1,3})}\lesssim& ||U_{\ell}(\cdot,\cdot)u_0||_{L^2(0,t;W^{1,3})}+||z\overline{y}||_{L^{1}(0,t;W^{1,2})} \\
\lesssim& ||U_{\ell}(\cdot,\cdot)u_0||_{L^2(0,t;W^{1,3})}+||z||_{L^{2}(0,t;L^6)}||y||_{L^{2}(0,t;W^{1,3})}+||z||_{L^{2}(0,t;W^{1,3})}||y||_{L^{2}(0,t;L^6)} \\
\lesssim& ||U_{\ell}(\cdot,\cdot)u_0||_{L^2(0,t;W^{1,3})}+||z||_{L^{2}(0,t;W^{1,3})}||y||_{L^{2}(0,t;W^{1,3})},
\end{align*}
\begin{align*}
||\Psi(y,z)||_{L^2(0,t;W^{1,3})}\lesssim||U_L(\cdot,\cdot)v_0||_{L^2(0,t;W^{1,3})}+||y||^2_{L^{2}(0,t;W^{1,3})}.
\end{align*}
Note that $W^{1,3}\hookrightarrow L^{6}$. Similarly,
\begin{align*}
&||\Phi(y,z)-\Phi(y',z')||_{L^2(0,t;W^{1,3})} \\
\le& C_{t}||z\overline{y}-z'\overline{y'}||_{L^{1}(0,t;W^{1,2})} \\
\le& C_{t} \left( ||z-z'||_{L^{2}(0,t;W^{1,3})}||y||_{L^{2}(0,t;W^{1,3})}+||z'||_{L^{2}(0,t;W^{1,3})}||y-y'||_{L^{2}(0,t;W^{1,3})} \right) \\
\le& C_{t}(||y||_{L^{2}(0,t;W^{1,3})}+||z'||_{L^{2}(0,t;W^{1,3})})(||y-y'||_{L^{2}(0,t;W^{1,3})}+||z-z'||_{L^{2}(0,t;W^{1,3})}),
\end{align*}
\begin{align*}
&||\Psi(y,z)-\Psi(y',z')||_{L^2(0,t;W^{1,3})} \\
\le& C_{t}(||y||_{L^{2}(0,t;W^{1,3})}+||y'||_{L^{2}(0,t;W^{1,3})})(||y-y'||_{L^{2}(0,t;W^{1,3})}+||z-z'||_{L^{2}(0,t;W^{1,3})}).
\end{align*}
For $u_0,v_0$, we know that $U_{\ell}(\cdot,\cdot)u_0,U_{L}(\cdot,\cdot)v_0\in L^2(0,t;W^{1,3})$, so by taking $T>0$ sufficiently small, the associated norm can be arbitrarily small. Thus, the argument of the contraction mapping works on a closed ball in $L^2(0,t;W^{1,3})$ centered at the origin and of a sufficiently small radius. Then the solution satisfies the integral equations $\mathcal{T}$ and belongs to $\mathcal{Y}_t$ by Strichartz estimates. \\\\
Finally, the blowup alternative follows in the same way as in the case of $L^2$-solutions.
\end{proof}

\section{Global well-posedness of $L^2$-solutions}

To show the global existence of $L^2$-solutions, we additionally assume
\begin{align}
\label{49}
\text{There exists a constant} \ c\in\mathbb{R}\backslash\{0\} \ \text{such that} \ \lambda=c\overline{\kappa}.
\end{align}
\begin{lem}
\label{lem1}
Assume $u_0,v_0\in L^2, \ 1\le d\le 4$ and $\lambda,\kappa$ satisfy (\ref{49}). Let $(u,v,(\tau_n)_{n\in\mathbb{N}},\tau^*(u_0,v_0))$ be the $L^2$-maximal solution of (\ref{SNLSS}). We have $\Pas$ for $0\le t<\tau^*(u_0,v_0)$
\begin{align}
\label{ucv}
Q(u,v)=&Q(u_0,v_0)+2\sum_{j=1}^N\int_0^t\int_{\mathbb{R}^d}\text{Re}(\mu_j)e_j|u(s)|^2d\xi d\beta_j(s) \nonumber \\
&+2c\sum_{j=1}^N\int_0^t\int_{\mathbb{R}^d}\text{Re}(\mu_j)e_j|v(s)|^2d\xi d\beta_j(s).
\end{align}
\end{lem}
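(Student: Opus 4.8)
The plan is to apply It\^o's formula to the real-valued processes $t\mapsto\|u(t)\|_{L^2}^2$ and $t\mapsto\|v(t)\|_{L^2}^2$ coming from the weak formulation \eqref{14}, to form the combination $Q(u,v)=\|u\|_{L^2}^2+c\|v\|_{L^2}^2$, and to verify that every term except the two It\^o integrals on the right of \eqref{ucv} cancels. Since the claim is about the maximal solution, I would first fix $n$, work on $[0,\tau_n]$ where $u,v\in\mathcal X_{\tau_n}$ and all the Strichartz space--time norms are finite by Theorem \ref{main}, prove \eqref{ucv} there, and then let $n\to\infty$ using $\tau^*(u_0,v_0)=\lim_n\tau_n$. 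Because an $L^2$-solution need not be $H^1$-valued, the It\^o formula must be justified by regularization: apply the smoothing operators $J_m:=m(m-\Delta)^{-1}$ (self-adjoint and bounded on $L^2$, commuting with $\Delta$, and $J_m\to I$ strongly) to \eqref{14}; then $J_mu,J_mv$ are continuous $L^2$-valued It\^o processes whose drift lies in $L^2$, the classical It\^o formula applies to $\|J_mu(t)\|_{L^2}^2$ and $\|J_mv(t)\|_{L^2}^2$, and one passes to the limit $m\to\infty$.

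Granting this, the It\^o expansion of $Q$ splits into three groups. \emph{(i) Dispersive terms:} $\text{Re}\int_{\mathbb R^d}\bar u\,(-\tfrac{i}{2\ell}\Delta u)\,d\xi=0$ since $\int_{\mathbb R^d}\bar u\,\Delta u\,d\xi=-\|\nabla u\|_{L^2}^2\in\mathbb R$, and likewise for $v$; the dispersive part contributes nothing. \emph{(ii) Damping versus It\^o correction:} the drift term $-\mu u$ gives $-2\int_{\mathbb R^d}\mu|u|^2\,d\xi\,dt$, while the quadratic variation of $\int u\,dW=\sum_j\mu_je_ju\,d\beta_j$ gives $\sum_j|\mu_j|^2\int_{\mathbb R^d}e_j^2|u|^2\,d\xi\,dt=2\int_{\mathbb R^d}\mu|u|^2\,d\xi\,dt$, by the very definition of $\mu$; these cancel exactly, and the same happens for $v$. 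This is the stochastic substitute for mass conservation. \emph{(iii) Nonlinear interaction:} the term $-i\lambda v\bar u$ contributes $-2\,\text{Re}\big(i\lambda\int_{\mathbb R^d}v\bar u^2\,d\xi\big)\,dt$ and $c$ times $-i\kappa u^2$ contributes $-2c\,\text{Re}\big(i\kappa\int_{\mathbb R^d}u^2\bar v\,d\xi\big)\,dt$; writing $I:=\int_{\mathbb R^d}u^2\bar v\,d\xi$ and using $\int_{\mathbb R^d}v\bar u^2\,d\xi=\overline I$, their sum equals $2\,\text{Re}\big[i(\overline\lambda-c\kappa)I\big]\,dt$, which is $0$ because \eqref{49} gives $\overline\lambda=\overline{c\overline\kappa}=c\kappa$ (this uses $c\in\mathbb R$).

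Hence only the martingale part survives: from $\|u(t)\|_{L^2}^2$ one gets $2\,\text{Re}\int_{\mathbb R^d}\bar u\,u\,dW=2\sum_j\text{Re}(\mu_j)\int_{\mathbb R^d}e_j|u|^2\,d\xi\,d\beta_j$, and from $c\|v(t)\|_{L^2}^2$ the analogous term with $|v|^2$; integrating from $0$ to $t$ yields \eqref{ucv} on $[0,\tau_n]$ (these are genuine local martingales there since $s\mapsto\int_{\mathbb R^d}e_j|u(s)|^2\,d\xi$ is continuous and bounded on $[0,\tau_n]$), and letting $n\to\infty$ gives it on $[0,\tau^*(u_0,v_0))$.

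The one non-routine point, which I expect to be the main obstacle, is making the limit $m\to\infty$ rigorous. Here $J_m$ commutes with $\Delta$, so group (i) is exact for every $m$ and needs no limit; but one has to show that $J_m(\mu u)\to\mu u$ and $J_m(v\bar u)\to v\bar u$, paired against $J_mu\to u$, in the relevant space--time norms, and similarly that $\sum_j|\mu_j|^2\|J_m(e_ju)\|_{L^2}^2\to2\int_{\mathbb R^d}\mu|u|^2\,d\xi$, all uniformly enough on $[0,\tau_n]$ to pass to the limit inside the time integrals and the stochastic integrals. This is where the Strichartz integrability built into $\mathcal X_{\tau_n}$ by Theorem \ref{main} (e.g.\ $v\bar u\in L^1(0,\tau_n;L^2)$ for $d=4$, and the corresponding dual $L^{q'}_tL^{r'}_x$-bounds for $d\le3$, paired with $u\in L^\infty(0,\tau_n;L^2)\cap L^{q}_tL^{r}_x$) enters decisively. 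Alternatively one could first establish the identity for $H^1$-solutions, where every pairing above is classical by Theorem \ref{mainH}, and then approximate $u_0,v_0\in L^2$ by $H^1$-data, passing to the limit through the continuous dependence of the local theory — with the usual care about convergence of the stopping times.
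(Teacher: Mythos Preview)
Your proposal is correct and follows essentially the same route as the paper: regularize the equation by $J_\varepsilon=(I-\varepsilon\Delta)^{-1}$ (your $J_m$ with $\varepsilon=1/m$), apply It\^o's formula to the squared $L^2$-norms of the smoothed solutions, form the combination $Q$, observe the three cancellations (dispersive, damping versus It\^o correction, and nonlinear via \eqref{49}), and pass to the limit; the only cosmetic difference is that the paper derives the It\^o expansion of $\|u_\varepsilon\|_{L^2}^2$ by expanding in an orthonormal basis $\{f_j\}\subset H^2$ and applying the scalar It\^o product rule to each $|\langle f_j,u_\varepsilon\rangle|^2$ before summing, and it makes explicit the localizing stopping times $\sigma_{n,m}=\inf\{t\le\tau_n:\|u(t)\|_{L^2}>m\}$ to justify stochastic Fubini and the convergence of the stochastic integrals, which you only gesture at.
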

\begin{proof}
Let $\{f_j\}_{j\ge1}\subset H^2(\mathbb{R}^d)$ be an orthonormal basis in $L^2(\mathbb{R}^d)$, set $J_{\varepsilon}=(I-\varepsilon \Delta)^{-1}$, $h_{\varepsilon}=J_{\varepsilon}h$ for any $h\in H^{-2}$ and $\phi_k=\mu_ke_k, \ 1\le k\le N$. From (\ref{14}) it follows that $\Pas$
\begin{align*}
u_{\varepsilon}(t)=&(u_0)_{\varepsilon}-\int_0^t[i\frac{1}{2\ell}\Delta u_{\varepsilon}(s)+(\mu u)_{\varepsilon}(s)+i\lambda (v\overline{u})_{\varepsilon}(s)]ds+\sum_{k=1}^N\int_0^t(u\phi_k)_{\varepsilon}(s)d\beta_k(s), \ t\in[0,\tau_n].
\end{align*}
Then for every $f_j, \ j\ge1$
\begin{align}
\label{massinner}
\langle f_j,u_{\varepsilon}(t)\rangle_{L^2}=&\langle f_j,(u_0)_{\varepsilon}\rangle_{L^2}-\langle f_j,\int_0^t[i\frac{1}{2\ell}\Delta u_{\varepsilon}(s)+(\mu u)_{\varepsilon}(s)+i\lambda (v\overline{u})_{\varepsilon}(s)]ds \rangle_{L^2} \nonumber \\
&+\langle f_j,\sum_{k=1}^N\int_0^t(u\phi_k)_{\varepsilon}(s)d\beta_k(s)\rangle_{L^2},  \ t\in[0,\tau_n].
\end{align}
By (\ref{uvn}), it can interchange the integrals for the drift term. While for the stochastic integral in (\ref{massinner}), we set
\[ \sigma_{n,m}=\inf\{t\in[0,\tau_n]:||u(t)||_{L^2}>m\}\wedge\tau_n. \]
Then, by the estimate $||J_{\varepsilon}f||_{H^k}\le ||f||_{H^k}$, we have
\begin{align*}
\mathbb{E}\int_0^{t\wedge\sigma_{n,m}}\sum_{k=1}^N|\langle f_j,(u\phi_k)_{\varepsilon}(s)\rangle_{L^2}|^2ds\le&\left( \sum_{k=1}^N||\phi_k||^2_{L^{\infty}} \right) ||f_j||^2_{L^{\infty}}\mathbb{E}\int_0^{t\wedge\sigma_{n,m}}||u(s)||^2_{L^2}ds \\
\le&\left( \sum_{k=1}^N||\phi_k||^2_{L^{\infty}} \right) ||f_j||^2_{L^{\infty}}m^2t<\infty.
\end{align*}
Therefore, by stochastic Fubini's theorem, we have
\begin{align}
\label{44}
\langle f_j,\sum_{k=1}^N\int_0^t(u\phi_k)_{\varepsilon}(s)d\beta_k(s)\rangle_{L^2}=\sum_{k=1}^N\int_0^t\langle f_j,(u\phi_k)_{\varepsilon}(s)\rangle_{L^2}d\beta_k(s),
\end{align}
holds on $\{ t\le\sigma_{n,m} \}$. By (\ref{uvn}), for $\Pas \ \omega\in\Omega$, there exists $m(\omega)\in\mathbb{N}$ such that $\sigma_{n,m}=\tau_n$ for $m\ge m(\omega)$. Hence
\begin{align}
\label{45}
\bigcup_{m\in\mathbb{N}}\{t \le \sigma_{n,m}\}=\{ t\le \tau_n\}.
\end{align}
This implies (\ref{44}) holds on $\{ t\le\tau_n\}$. So, we deduce that $\Pas$
\begin{align}
\langle f_j,u_{\varepsilon}(t)\rangle_{L^2}=&\langle f_j,(u_0)_{\varepsilon}\rangle_{L^2}-\int_0^t\langle f_j,i\frac{1}{2\ell}\Delta u_{\varepsilon}(s)+(\mu u)_{\varepsilon}(s)+i\lambda (v\overline{u})_{\varepsilon}(s)\rangle_{L^2}ds \nonumber \\
&+\sum_{k=1}^N\int_0^t\langle f_j,(u\phi_k)_{\varepsilon}(s)\rangle_{L^2}d\beta_k(s), \ t\in[0,\tau_n].
\end{align}
Applying the It\^{o} product rule, we obtain
\begin{align*}
&|\langle f_j,u_{\varepsilon}(t)\rangle_{L^2}|^2 \\
=&|\langle f_j,(u_0)_{\varepsilon}\rangle_{L^2}|^2+2\text{Re}\int_0^t\langle u_{\varepsilon}(s),f_j\rangle_{L^2}\langle f_j,-i\frac{1}{2\ell}\Delta u_{\varepsilon}(s)\rangle_{L^2} ds \\
&+2\text{Re}\int_0^t\langle u_{\varepsilon}(s),f_j\rangle_{L^2}\langle f_j,-(\mu u)_{\varepsilon}(s)\rangle_{L^2} ds+2\text{Re}\int_0^t\langle u_{\varepsilon}(s),f_j\rangle_{L^2}\langle f_j,-i\lambda (v\overline{u})_{\varepsilon}(s)\rangle_{L^2} ds \\
&+2\sum_{k=1}^N\text{Re}\int_0^t\langle u_{\varepsilon}(s),f_j\rangle_{L^2}\langle f_j,(u\phi_k)_{\varepsilon}(s)\rangle_{L^2} d\beta_k(s)+\sum_{k=1}^N\int_0^t|\langle f_j,(u\phi_k)_{\varepsilon}(s)\rangle_{L^2}|^2ds.
\end{align*}
Taking $j\in\mathbb{N}$ sums, interchanging infinite sums and integrals, it follows that
\begin{align}
\label{47}
||u_{\varepsilon}(t)||^2_{L^2}=&||(u_0)_{\varepsilon}||^2_{L^2}+2\text{Re}\int_0^t\langle u_{\varepsilon}(s),-i\frac{1}{2\ell}\Delta u_{\varepsilon}(s)\rangle_{L^2}ds \nonumber \\
&+2\text{Re}\int_0^t\langle u_{\varepsilon}(s),-(\mu u)_{\varepsilon}(s)\rangle_{L^2}ds+2\text{Re}\int_0^t\langle u_{\varepsilon}(s),-i\lambda(v\overline{u})_{\varepsilon}(s)\rangle_{L^2}ds \nonumber \\
&+2\text{Re}\sum_{k=1}^N\int_0^t\langle u_{\varepsilon}(s),(u\phi_k)_{\varepsilon}(s)\rangle_{L^2}d\beta_k(s)+\sum_{k=1}^N\int_0^t||(u\phi_k)_{\varepsilon}(s)||^2_{L^2}ds, \ t\in[0,\tau_n].
\end{align}
Similarly,
\begin{align}
\label{47v}
||v_{\varepsilon}(t)||^2_{L^2}=&||(v_0)_{\varepsilon}||^2_{L^2}+2\text{Re}\int_0^t\langle v_{\varepsilon}(s),-i\frac{1}{2L}\Delta v_{\varepsilon}(s)\rangle_{L^2}ds \nonumber \\
&+2\text{Re}\int_0^t\langle v_{\varepsilon}(s),-(\mu v)_{\varepsilon}(s)\rangle_{L^2}ds+2\text{Re}\int_0^t\langle v_{\varepsilon}(s),-i\kappa u^2_{\varepsilon}(s)\rangle_{L^2}ds \nonumber \\
&+2\text{Re}\sum_{k=1}^N\int_0^t\langle v_{\varepsilon}(s),(v\phi_k)_{\varepsilon}(s)\rangle_{L^2}d\beta_k(s)+\sum_{k=1}^N\int_0^t||(v\phi_k)_{\varepsilon}(s)||^2_{L^2}ds, \ t\in[0,\tau_n].
\end{align}
From \eqref{49}, combining \eqref{47} and \eqref{47v}, we have
\begin{align}
\label{47uv}
Q(u_{\varepsilon},v_{\varepsilon})=&Q(u_0,v_0)+2\text{Re}\int_0^t\langle u_{\varepsilon}(s),-i\frac{1}{2\ell}\Delta u_{\varepsilon}(s)\rangle_{L^2}ds+2c\text{Re}\int_0^t\langle v_{\varepsilon}(s),-i\frac{1}{2L}\Delta v_{\varepsilon}(s)\rangle_{L^2}ds \nonumber \\
&+2\text{Re}\int_0^t\langle u_{\varepsilon}(s),-(\mu u)_{\varepsilon}(s)\rangle_{L^2}ds+2c\text{Re}\int_0^t\langle v_{\varepsilon}(s),-(\mu v)_{\varepsilon}(s)\rangle_{L^2}ds \nonumber \\
&+2\text{Re}\int_0^t\langle u_{\varepsilon}(s),-i\lambda(v\overline{u})_{\varepsilon}(s)\rangle_{L^2}ds+2c\text{Re}\int_0^t\langle v_{\varepsilon}(s),-i\kappa u^2_{\varepsilon}(s)\rangle_{L^2}ds \nonumber \\
&+2\text{Re}\sum_{k=1}^N\int_0^t\langle u_{\varepsilon}(s),(u\phi_k)_{\varepsilon}(s)\rangle_{L^2}d\beta_k(s)+2c\text{Re}\sum_{k=1}^N\int_0^t\langle v_{\varepsilon}(s),(v\phi_k)_{\varepsilon}(s)\rangle_{L^2}d\beta_k(s) \nonumber \\
&+\sum_{k=1}^N\int_0^t||(u\phi_k)_{\varepsilon}(s)||^2_{L^2}ds+c\sum_{k=1}^N\int_0^t||(v\phi_k)_{\varepsilon}(s)||^2_{L^2}ds.
\end{align}
Here, the second term and the third term are zero. Therefore, since for $f\in L^p, \ p\in(1,\infty)$,
\begin{align*}
||J_{\varepsilon}f||_{L^p}&\le||f||_{L^p}, \\
J_{\varepsilon}f\to f \ &\text{in} \ L^p \ (\varepsilon\to0),
\end{align*}
we can pass to the limit $\varepsilon\to0$ in (\ref{47uv}). Take the stochastic integral (\ref{47uv}) as an example. Since as $\varepsilon\to0$,
\[ \langle u_{\varepsilon}(s),(u\phi_k)_{\varepsilon}(s)\rangle_{L^2}\to \langle u(s),u(s)\phi_k\rangle_{L^2}, \]
and
\begin{align*}
\mathbb{E}\int_0^{t\wedge\sigma_{n,m}}\sum_{k=1}^N(\text{Re}\langle u_{\varepsilon}(s),(u\phi_k)_{\varepsilon}(s)\rangle_{L^2})^2ds\le&\left( \sum_{k=1}^N||\phi_k||^2_{L^{\infty}} \right) \mathbb{E}\int_0^{t\wedge\sigma_{n,m}}||u(s)||^4_{L^2}ds \\
\le&\left( \sum_{k=1}^N||\phi_k||^2_{L^{\infty}} \right) m^4t<\infty,
\end{align*}
it follows that
\begin{align}
\label{48}
2\text{Re}\sum_{k=1}^N\int_0^t\langle u_{\varepsilon}(s),(u\phi_k)_{\varepsilon}(s)\rangle_{L^2}d\beta_k(s)\to 2\text{Re}\sum_{k=1}^N\int_0^t\langle u(s),u(s)\phi_k\rangle_{L^2}d\beta_k(s), 
\end{align}
in probability on $\{t\le \sigma_{n,m}\}$, which implies by (\ref{45}) that (\ref{48}) holds on $\{t\le \tau_n\}$.

After taking the limit of (\ref{47uv}), we notice that the fourth term and the fifth term cancel with the
tenth term and the
last term. As a result, we obtain $\Pas$ (\ref{ucv})
on $\{t\le\tau_n\}$, which implies that (\ref{ucv}) holds on $\{t\le\tau^*(u_0,v_0)\}$ as $\tau_n\to\tau^*(u_0,v_0), \ \Pas$
\end{proof}
\begin{lem}
\label{lem2}
Assume $u_0,v_0\in L^2, \ 1\le d\le 4$ and $\lambda,\kappa$ satisfy (\ref{49}). Let $(u,v,(\tau_n)_{n\in\mathbb{N}},\tau^*(u_0,v_0))$ be the $L^2$-maximal solution of (\ref{SNLSS}). Then, we have
\begin{align}
\label{410}
\mathbb{E} \sup_{0\le t<\tau^*(u_0,v_0)} (||u(t)||^2_{L^2}+c||v(t)||^2_{L^2})\le \widetilde{C}(T)<\infty.
\end{align}
\end{lem}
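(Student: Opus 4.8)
The plan is to turn the balance law \eqref{ucv} of Lemma \ref{lem1} into the a priori bound \eqref{410} by a \emph{localized} Burkholder--Davis--Gundy/Gr\"onwall estimate. Write $Q(t):=Q(u(t),v(t))=\|u(t)\|_{L^2}^2+c\|v(t)\|_{L^2}^2$ (recall $c>0$, so $Q\ge0$ and $Q$ is comparable to $\|u\|_{L^2}^2+\|v\|_{L^2}^2$). The crucial structural feature of \eqref{ucv}, which is precisely what \eqref{49} buys, is that it carries \emph{no drift term}: the contribution of $\mu$ and the interaction contributions coming from $v\overline u$ and $u^2$ have all cancelled, leaving $\Pas$, for $0\le t<\tau^*(u_0,v_0)$,
\[
Q(t)=Q(u_0,v_0)+M_t,
\]
where
\[
M_t:=2\sum_{j=1}^N\int_0^t\text{Re}(\mu_j)\Big(\int_{\mathbb{R}^d}e_j\big(|u(s)|^2+c|v(s)|^2\big)\,d\xi\Big)\,d\beta_j(s)
\]
is a continuous local martingale vanishing at $t=0$ and $Q(u_0,v_0)$ is a finite constant. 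Since $N<\infty$ and, by (H1$)_0$, each $e_j$ is bounded, the $j$-th integrand above is bounded in modulus by $|\text{Re}(\mu_j)|\,\|e_j\|_{L^\infty}\,Q(s)$, so the quadratic variation obeys $\langle M\rangle_t\le C_0\int_0^tQ(s)^2\,ds$ with $C_0=C_0(\{\mu_j\},\{e_j\})<\infty$.

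Since $\mathbb{E}\sup Q$ is not known to be finite a priori, one may not immediately absorb a supremum; the cure is to localize first. Put $\sigma_{n,m}:=\inf\{t\in[0,\tau_n]:\|u(t)\|_{L^2}+\|v(t)\|_{L^2}>m\}\wedge\tau_n$, a mild variant of the stopping times in the proof of Lemma \ref{lem1}. On $[0,\sigma_{n,m}]$ the process $Q$ is bounded by $(1+c)m^2$, so $M_{\cdot\wedge\sigma_{n,m}}$ is a genuine square-integrable martingale and $\mathbb{E}\sup_{0\le s\le T\wedge\sigma_{n,m}}Q(s)<\infty$. Applying the Burkholder--Davis--Gundy inequality to $M_{\cdot\wedge\sigma_{n,m}}$, then the bracket bound, then Young's inequality,
\begin{align*}
\mathbb{E}\sup_{0\le s\le t\wedge\sigma_{n,m}}|M_s|
&\le C\,\mathbb{E}\Big(\int_0^{t\wedge\sigma_{n,m}}Q(s)^2\,ds\Big)^{1/2}\\
&\le C\,\mathbb{E}\Big(\sup_{0\le s\le t\wedge\sigma_{n,m}}Q(s)\cdot\int_0^{t\wedge\sigma_{n,m}}Q(s)\,ds\Big)^{1/2}\\
&\le \tfrac12\,\mathbb{E}\sup_{0\le s\le t\wedge\sigma_{n,m}}Q(s)+C'\,\mathbb{E}\int_0^{t\wedge\sigma_{n,m}}Q(s)\,ds.
\end{align*}
Inserting this into $Q(t)=Q(u_0,v_0)+M_t$ and absorbing the (now finite) half-supremum on the left yields, for $0\le t\le T$,
\[
\mathbb{E}\sup_{0\le s\le t\wedge\sigma_{n,m}}Q(s)\le 2\,Q(u_0,v_0)+2C'\int_0^t\mathbb{E}\sup_{0\le r\le s\wedge\sigma_{n,m}}Q(r)\,ds,
\]
and Gr\"onwall's lemma gives $\mathbb{E}\sup_{0\le s\le T\wedge\sigma_{n,m}}Q(s)\le 2\,Q(u_0,v_0)\,e^{2C'T}$, a bound that is \emph{uniform in $m$ and $n$}.

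It remains to remove the cut-offs. By \eqref{uvn} one has $u,v\in C([0,\tau_n];L^2)$, hence for $\Pas\ \omega$ the time $\sigma_{n,m}(\omega)$ equals $\tau_n(\omega)$ for all large $m$ (the analogue of \eqref{45}); since $\sup_{0\le s\le T\wedge\sigma_{n,m}}Q(s)$ increases in $m$ to $\sup_{0\le s\le T\wedge\tau_n}Q(s)$, monotone convergence gives $\mathbb{E}\sup_{0\le s\le T\wedge\tau_n}Q(s)\le 2\,Q(u_0,v_0)\,e^{2C'T}$. Letting $n\to\infty$, $\tau_n\uparrow\tau^*(u_0,v_0)\le T$ and $\sup_{0\le s\le T\wedge\tau_n}Q(s)\uparrow\sup_{0\le s<\tau^*(u_0,v_0)}Q(s)$, so a second application of monotone convergence gives $\mathbb{E}\sup_{0\le s<\tau^*(u_0,v_0)}Q(s)\le 2\,Q(u_0,v_0)\,e^{2C'T}=:\widetilde C(T)<\infty$, which is \eqref{410}.

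The only delicate point is the one just flagged: absorbing the half-supremum is legitimate only after the cut-off $\sigma_{n,m}$ has first made $\mathbb{E}\sup Q$ finite, so the Gr\"onwall constant must be kept independent of $m$ and $n$ in order for the two monotone limits to pass. Everything else — the absence of a drift in \eqref{ucv} (supplied by Lemma \ref{lem1}), the bracket estimate, BDG, Young and Gr\"onwall — is routine.
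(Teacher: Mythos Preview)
Your argument is correct and follows essentially the same route as the paper: exploit the driftless identity \eqref{ucv}, bound the martingale via Burkholder--Davis--Gundy and the epsilon/Young inequality, absorb the supremum, and close with Gr\"onwall, then pass to the limit in $n$. The only differences are cosmetic: the paper works directly at level $\tau_n$ (tacitly taking $\mathbb{E}\sup_{[0,\tau_n]}Q<\infty$ for granted from \eqref{uvn}) and passes to $\tau^*$ by Fatou's lemma, whereas you insert the extra localization $\sigma_{n,m}$ to make the absorption step fully rigorous and then use monotone convergence twice. Your version is slightly more careful; the paper's is slightly terser. One small caveat: the hypothesis \eqref{49} only asks $c\in\mathbb{R}\setminus\{0\}$, not $c>0$; your parenthetical ``recall $c>0$'' is exactly what is needed for $Q\ge0$ and for the bracket bound $\langle M\rangle_t\le C_0\int_0^tQ(s)^2\,ds$ to hold as stated, and the paper itself uses this implicitly when it deduces control of $\|u\|_{L^2}^2+\|v\|_{L^2}^2$ in Theorem~\ref{th41}.
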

\begin{proof}
Taking into account that $\displaystyle \sum_{j=1}^N|\mu_j|||e_j||^2_{L^{\infty}}<\infty$ in \eqref{ucv}, by the Burkholder-Davis-Gundy and epsilon inequality, we have for $t\in[0,T]$ and $n\in\mathbb{N}$,
\begin{align*}
&\mathbb{E} \sup_{s\in[0,t\wedge\tau_n]}\left| \sum_{j=1}^N\int_0^s\int_{\mathbb{R}^d}\text{Re}(\mu_j)e_j|u(r)|^2d\xi d\beta_j(r)\right| \\
\le&\frac{1}{4}\mathbb{E}\sup_{s\in[0,t\wedge\tau_n]}||u(s)||^2_{L^2}+C\int_0^t\mathbb{E}\left(\sup_{r\in[0,s\wedge\tau_n]}||u(r)||^2_{L^2}\right)ds.
\end{align*}
Together with (\ref{ucv}), this yields
\begin{align}
\label{ucvest}
&\mathbb{E}\sup_{s\in[0,t\wedge\tau_n]}\left(||u(s)||^2_{L^2}+c||v(s)||^2_{L^2}\right) \nonumber \\
\le&2(||u_0||^2_{L^2}+c||v_0||^2_{L^2})+4C\int_0^t\mathbb{E}\sup_{r\in[0,s\wedge\tau_n]}\left(||u(r)||^2_{L^2}+c||v(r)||^2_{L^2}\right)ds
\end{align}
which implies
\[ \mathbb{E}\sup_{t\in[0,T\wedge\tau_n]}\left(||u(t)||^2_{L^2}+c||v(t)||^2_{L^2}\right)\le \widetilde{C}(T). \]
Finally, taking $n\uparrow \infty$ and applying Fatou's lemma, we obtain (\ref{410}).
\end{proof}
By Lemma \ref{lem1} and Lemma \ref{lem2}, we can proof the global existence of the $L^2$-solution for $1\le d\le 3$.
\begin{theorem}
\label{th41}
Assume (H1$)_0, \ 1\le d\le 3$ and $\lambda,\kappa$ satisfy (\ref{49}). Then, for each $u_0,v_0\in L^2$ and $0<T<\infty$, there exists a $L^2$-global solution $(u,v,T)$ of \textnormal{(\ref{SNLSS})}. It holds $\Pas$ that
\begin{align*}
u,v\in \mathcal{X}_{T},
\end{align*}
and uniqueness holds in the function space $\mathcal{X}_T$, where $\mathcal{X}_T$ is given by \eqref{solspX}.
\end{theorem}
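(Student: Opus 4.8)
The plan is to obtain the global solution by combining the local existence of Theorem~\ref{main} with the a priori bound of Lemma~\ref{lem2}, which here plays the role that conservation of mass plays in the single-equation case. First I would invoke Theorem~\ref{main} (valid for $1\le d\le 3$): it gives the maximal $L^2$-solution $(u,v,(\tau_n)_{n\in\mathbb N},\tau^*(u_0,v_0))$ of \eqref{SNLSS}, with $u|_{[0,\tau_n]},v|_{[0,\tau_n]}\in\mathcal X_{\tau_n}$, uniqueness in each $\mathcal X_{\tau_n}$, and the blowup alternative. I would also record, from the construction in the proof of Theorem~\ref{yzmain} (where $Z_0^{(n+1)}=0$ and $t\mapsto Z_t^{(n+1)}$ is continuous), that the increment $\sigma_n=\tau_{n+1}-\tau_n$ is strictly positive whenever $\tau_n<T$; in particular, if $\tau_{n_0}=\tau^*(u_0,v_0)$ for some $n_0$, then necessarily $\tau^*(u_0,v_0)=T$ (otherwise $\tau_{n_0}<T$ would force $\tau_{n_0+1}>\tau_{n_0}=\tau^*(u_0,v_0)$, which is impossible).

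Next I would read off the a priori bound. By Lemma~\ref{lem2}, $\mathbb E\sup_{0\le t<\tau^*(u_0,v_0)}(\|u(t)\|_{L^2}^2+c\|v(t)\|_{L^2}^2)\le\widetilde C(T)<\infty$, hence $\sup_{0\le t<\tau^*(u_0,v_0)}(\|u(t)\|_{L^2}^2+c\|v(t)\|_{L^2}^2)<\infty$ $\Pas$ Since $c$ is positive, $\|u\|_{L^2}^2+c\|v\|_{L^2}^2\ge\min(1,c)\bigl(\|u\|_{L^2}^2+\|v\|_{L^2}^2\bigr)$, so
\[
M^*:=\sup_{0\le t<\tau^*(u_0,v_0)}\bigl(\|u(t)\|_{L^2}+\|v(t)\|_{L^2}\bigr)<\infty\quad\Pas
\]

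Finally I would combine the two. By the blowup alternative of Theorem~\ref{main}, on the event $\{\tau_n<\tau^*(u_0,v_0)\text{ for all }n\}$ one has $\|u(t)\|_{L^2}+\|v(t)\|_{L^2}\to\infty$ as $t\uparrow\tau^*(u_0,v_0)$, hence $M^*=\infty$ there; since $M^*<\infty$ $\Pas$, this event is $\Pro$-null. On its complement $\tau_{n_0}=\tau^*(u_0,v_0)$ for some (first) $n_0=n_0(\omega)$, so by the first paragraph $\tau^*(u_0,v_0)=T$ and $\tau_{n_0}=T$; taking $u:=u_{n_0}$, $v:=v_{n_0}$ on $[0,T]$ we obtain an $L^2$-solution $(u,v,T)$ of \eqref{SNLSS} with $u,v\in\mathcal X_{\tau_{n_0}}=\mathcal X_T$ $\Pas$ Uniqueness in $\mathcal X_T$ then follows from uniqueness in each $\mathcal X_{\tau_n}$ via the maximal-agreement-time stopping time $\iota:=\sup\{t:\text{the two solutions coincide on }[0,t]\}$, exactly as in the uniqueness part of the proof of Theorem~\ref{yzmain}. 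I do not expect the deduction itself to be the hard part: the real content is Lemma~\ref{lem2}, and behind it the It\^o product rule and mollification arguments underlying Lemma~\ref{lem1}, which together replace the conservation of mass lost in the stochastic setting. The one point in the present argument that genuinely requires care is that Lemma~\ref{lem2} controls the combined quantity $\|u\|_{L^2}^2+c\|v\|_{L^2}^2$ rather than the two masses separately, so one uses $c>0$ to pass from it to the bound on $\|u\|_{L^2}+\|v\|_{L^2}$ that feeds the blowup alternative.
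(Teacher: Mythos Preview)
Your argument mirrors the paper's proof: invoke the local theory and blowup alternative of Theorem~\ref{main}, apply the a priori bound of Lemma~\ref{lem2}, and conclude $\tau^*(u_0,v_0)=T$; you are simply more explicit than the paper about the case $\tau_{n_0}=\tau^*$ and about uniqueness. The one point to flag is your line ``since $c$ is positive'': hypothesis~\eqref{49} only asks $c\in\mathbb R\setminus\{0\}$, and the paper is equally silent on this (it quotes Lemma~\ref{lem2} with $\|u\|^2_{L^2}+\|v\|^2_{L^2}$ in place of the $\|u\|^2_{L^2}+c\|v\|^2_{L^2}$ that the lemma actually delivers), so your proof matches the paper's but both tacitly need $c>0$ for the passage from Lemma~\ref{lem2} to the blowup alternative.
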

\begin{proof}
Let $(u,v,(\tau_n)_{n\in\mathbb{N}},\tau^*(u_0,v_0))$ be the $L^2$-local solution of (\ref{SNLSS}). Then, by Lemma \ref{lem2}, we have 
\[ \mathbb{E} \sup_{0\le t<\tau^*(u_0,v_0)} (||u(t)||^2_{L^2}+||v(t)||^2_{L^2})\le \widetilde{C}(T)<\infty, \]
which implies
\[ \sup_{0\le t<\tau^*(u_0,v_0)} (||u(t)||^2_{L^2}+||v(t)||^2_{L^2})<\infty, \ \Pas \]
Then it follows from the blowup alternative in Theorem \ref{main} that $\tau^*(u_0,v_0)=T, \ \Pas$.
\end{proof}

\section{Global well-posedness of $H^1$-solutions}

We introduce the operators $\Theta_m, \ m\in\mathbb{N}$, used in \cite{BD03} and defined for any $f\in\mathcal{S}$ by
\[ \Theta_mf:=\mathcal{F}^{-1} \left( \theta\left(\frac{|\cdot|}{m}\right)\right)*f \quad (=m^d(\mathcal{F}^{-1}\theta)(m\cdot)*f). \]
where $\theta\in C^{\infty}_c$ is real-valued, nonnegative and $\theta(x)=1$  for $|x|\le 1, \ \theta(x)=0$ for $|x|>2$.

We consider the approximating equation
\begin{eqnarray}
\label{ASNLSS}
\begin{cases}
idu_m=\frac{1}{2\ell}\Delta u_mdt+\lambda \Theta_m(v_m\overline{u_m})dt-i\mu u_mdt+iu_mdW, \quad t\in(0,T), \ \xi\in \mathbb{R}^d, \\
idv_m=\frac{1}{2L}\Delta v_mdt+\kappa \Theta_m(u^2_m)dt-i\mu v_mdt+iv_mdW, \quad t\in(0,T), \ \xi\in \mathbb{R}^d, \\
u_m(0)=u_0, \quad v_m(0)=v_0, \quad \xi\in\mathbb{R}^d.
\end{cases}
\end{eqnarray}
\begin{lem}
\label{lem5}
Assume $u_0,v_0\in H^1$ and $1\le d\le 6$. Let $(u_m,v_m,(\tau_{n})_{n\in\mathbb{N}},\tau^*(u_0,v_0))$ be the $H^1$-maximal solution of (\ref{ASNLSS}). We have $\Pas$ for $0\le t<\tau^*(u_0,v_0)$
\begin{align}
\label{nab}
||\nabla u_m(t)||^2_{L^2}&=||\nabla u_0||^2_{L^2}+2\int_0^t\text{Re}\langle -\nabla(\mu u_m)(s),\nabla u_m(s)\rangle_{L^2}ds \nonumber \\
&+\sum_{j=1}^N\int_0^t||\nabla(u_m(s)\phi_j)||^2_{L^2}ds \nonumber \\
&-2\lambda \int_0^t\text{Re}\langle i\nabla \Theta_m(v_m\overline{u_m})(s),\nabla u_m(s)\rangle_{L^2}ds \nonumber \\
&+2\sum_{j=1}^N\int_0^t\text{Re}\langle\nabla (\phi_ju_m(s)),\nabla u_m(s)\rangle_{L^2}d\beta_j(s), \\
||\nabla v_m(t)||^2_{L^2}&=||\nabla v_0||^2_{L^2}+2\int_0^t\text{Re}\langle -\nabla(\mu v_m)(s),\nabla v_m(s)\rangle_{L^2}ds \nonumber \\
&+\sum_{j=1}^N\int_0^t||\nabla(v_m(s)\phi_j)||^2_{L^2}ds \nonumber \\
&-2\kappa \int_0^t\text{Re}\langle i\nabla \Theta_m(u^2_m)(s),\nabla v_m(s)\rangle_{L^2}ds \nonumber \\
&+2\sum_{j=1}^N\int_0^t\text{Re}\langle\nabla (\phi_jv_m(s)),\nabla v_m(s)\rangle_{L^2}d\beta_j(s).
\end{align}
\end{lem}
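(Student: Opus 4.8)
\emph{Proof proposal.} The plan is to repeat, one derivative higher, the It\^o-formula argument carried out for the mass in the proof of Lemma \ref{lem1}. Since $\Theta_m$ is a Fourier multiplier with compactly supported symbol, $\Theta_m(v_m\overline{u_m})$ and $\Theta_m(u_m^2)$ lie in $H^k$ for every $k$, so along an $H^1$-solution of \eqref{ASNLSS} the only term that is not already $H^1$-valued is the Laplacian. Hence, fixing an orthonormal basis $\{f_j\}_{j\ge1}\subset H^2(\mathbb{R}^d)$ of $L^2(\mathbb{R}^d)$, setting $J_\varepsilon=(I-\varepsilon\Delta)^{-1}$, $h_\varepsilon=J_\varepsilon h$ and $\phi_j=\mu_je_j$, and applying $J_\varepsilon$ to the first line of \eqref{ASNLSS}, the process $u_{m,\varepsilon}$ takes values in $H^3$ and satisfies, $\Pas$ for $t\in[0,\tau_n]$,
\[ u_{m,\varepsilon}(t)=(u_0)_\varepsilon-\int_0^t\Big[i\tfrac{1}{2\ell}\Delta u_{m,\varepsilon}(s)+(\mu u_m)_\varepsilon(s)+i\lambda\,\Theta_m(v_m\overline{u_m})_\varepsilon(s)\Big]ds+\sum_{j=1}^N\int_0^t(u_m\phi_j)_\varepsilon(s)\,d\beta_j(s), \]
with all terms in $H^1$.

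Next I would apply the It\^o product rule to $|\langle \partial_k f_j,u_{m,\varepsilon}(t)\rangle_{L^2}|^2$ for each $1\le k\le d$ and $j\ge1$, exactly as \eqref{massinner}--\eqref{47} were obtained, and then sum over $k$ and $j$ to express $\|\nabla u_{m,\varepsilon}(t)\|_{L^2}^2$ through: (i) the Laplacian drift $2\,\text{Re}\langle \nabla u_{m,\varepsilon},-i\tfrac1{2\ell}\nabla\Delta u_{m,\varepsilon}\rangle_{L^2}$, which \emph{vanishes} because $\langle\nabla u_{m,\varepsilon},\nabla\Delta u_{m,\varepsilon}\rangle_{L^2}=-\|\Delta u_{m,\varepsilon}\|_{L^2}^2\in\mathbb{R}$; (ii) the potential drift $2\,\text{Re}\langle -\nabla(\mu u_m)_\varepsilon,\nabla u_{m,\varepsilon}\rangle_{L^2}$; (iii) the interaction drift $-2\lambda\,\text{Re}\langle i\nabla\Theta_m(v_m\overline{u_m})_\varepsilon,\nabla u_{m,\varepsilon}\rangle_{L^2}$; (iv) the martingale term $2\sum_j\text{Re}\langle\nabla(\phi_ju_m)_\varepsilon,\nabla u_{m,\varepsilon}\rangle_{L^2}\,d\beta_j$; and (v) the It\^o correction $\sum_j\|\nabla(u_m\phi_j)_\varepsilon\|_{L^2}^2\,ds$. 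The interchange of the infinite $j$-sum with the $ds$- and $d\beta_j$-integrals is justified as in \eqref{44}--\eqref{45}, using the localizing stopping times $\sigma_{n,m'}=\inf\{t\in[0,\tau_n]:\|u_m(t)\|_{H^1}>m'\}\wedge\tau_n$ together with the Burkholder--Davis--Gundy inequality and stochastic Fubini, and $\bigcup_{m'}\{t\le\sigma_{n,m'}\}=\{t\le\tau_n\}$.

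Then I would let $\varepsilon\to0$. Using $\|J_\varepsilon f\|_{H^k}\le\|f\|_{H^k}$ and $J_\varepsilon f\to f$ in $H^k$, together with the product formula $\nabla(\phi_j u_m)=\phi_j\nabla u_m+(\nabla\phi_j)u_m$ (legitimate and $L^2$-controlled thanks to the derivative bounds in (H1$)_1$) and $J_\varepsilon\nabla=\nabla J_\varepsilon$, every drift term converges pointwise and the martingale term converges in probability on $\{t\le\sigma_{n,m'}\}$, hence on $\{t\le\tau_n\}$, by the dominated-convergence argument already used for \eqref{48}. Letting finally $n\to\infty$ and recalling $\tau_n\uparrow\tau^*(u_0,v_0)$ yields \eqref{nab} on $\{t<\tau^*(u_0,v_0)\}$; the identity for $\|\nabla v_m\|_{L^2}^2$ follows verbatim from the second line of \eqref{ASNLSS}.

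The main obstacle is the same one hidden in the proof of Lemma \ref{lem1}: the careful commutation of $J_\varepsilon$ with the multiplication operators $\mu\cdot$, $\phi_j\cdot$ and with $\nabla$, and the justification of the $\varepsilon\to0$ limit of the quadratic term $\sum_j\|\nabla(u_m\phi_j)_\varepsilon\|_{L^2}^2$ and of the stochastic integral. Here the hypothesis (H1$)_1$ — in particular $\partial^\gamma e_j\in L^\infty$ for $|\gamma|\le1$, so that $\sum_j\|\phi_j\|_{W^{1,\infty}}^2<\infty$ — is exactly what is needed to dominate these terms and to commute the infinite sums with the integrals. In contrast with the $L^2$ computation, the Laplacian contribution now drops out for the same algebraic reason as in the classical deterministic $H^1$-energy identity.
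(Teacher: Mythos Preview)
Your proposal is correct and follows essentially the same route as the paper: regularize by $J_\varepsilon=(I-\varepsilon\Delta)^{-1}$, test the mollified equation against $\partial_l f_k$ for an $H^2$-orthonormal basis $\{f_k\}$ of $L^2$, apply the It\^o product rule to $|\langle u_{m,\varepsilon},\partial_l f_k\rangle|^2$, sum over $k$ and $l$, and pass to the limit $\varepsilon\to0$ using $\|J_\varepsilon f\|_{H^k}\le\|f\|_{H^k}$ and $J_\varepsilon f\to f$ in $H^k$. Your treatment is in fact slightly more explicit than the paper's in two places---the localization via $\sigma_{n,m'}$ for the stochastic Fubini step (which the paper spells out only in the proof of Lemma~\ref{lem1}) and the role of (H1$)_1$ in dominating $\sum_j\|\phi_j\|_{W^{1,\infty}}^2$---but these are elaborations of the same argument, not a different one.
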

\begin{proof}
We only prove (\ref{nab}). Let $\{f_j\}_{j\ge1}\subset H^2(\mathbb{R}^d)$ be an orthonormal basis in $L^2(\mathbb{R}^d)$, set $J_{\varepsilon}=(I-\varepsilon \Delta)^{-1}$, $h_{\varepsilon}=J_{\varepsilon}h$ for any $h\in H^{-1}$ and $\phi_k=\mu_ke_k, \ 1\le k\le N$. Then, we have $\Pas$ for $t\in(0,\tau^*(u_0,v_0))$
\begin{eqnarray}
\label{idu}
\begin{cases}
\displaystyle
idu_{m,\varepsilon}=\frac{1}{2\ell}\Delta u_{m,\varepsilon}dt-i(\mu u_m)_{\varepsilon}dt+\lambda g_{m,\varepsilon}dt+\sum_{j=1}^Ni(u_m\phi_j)_{\varepsilon}d\beta_j, \\
u_{m,\varepsilon}(0)=u_{0,\varepsilon},
\end{cases}
\end{eqnarray}
where $g_{m,\varepsilon}=[\Theta_m(v_m\overline{u_m})]_{\varepsilon}$.

Noticing that $\partial_{l}f_k\in H^1, \ 1\le l\le d$ for each $f_k,k\in\mathbb{N}$, it follows from (\ref{idu}) and Fubini's theorem that $\Pas$ for $t\in(0,\tau^*(u_0,v_0))$
\begin{align*}
\langle u_{m,\varepsilon}(t),\partial_lf_k\rangle_{L^2}=&\langle u_{0,\varepsilon},\partial_lf_k\rangle_{L^2}+\int_0^t\langle -i\frac{1}{2\ell}\Delta u_{m,\varepsilon}(s),\partial_lf_k\rangle_{L^2}ds+\int_0^t\langle -(\mu u_m)_{\varepsilon}(s),\partial_lf_k\rangle_{L^2}ds \\
&+\int_0^t\langle -\lambda ig_{m,\varepsilon}(s),\partial_lf_k\rangle_{L^2}ds+\sum_{j=1}^N\int_0^t\langle (u_m(s)\phi_j)_{\varepsilon},\partial_lf_k\rangle_{L^2}d\beta_j(s).
\end{align*}
Applying It\^{o}'s product rule and integrating by parts, we obtain
\begin{align*}
|\langle u_{m,\varepsilon}(t),\partial_lf_k\rangle_{L^2}|^2=&|\langle u_{0,\varepsilon},\partial_lf_k\rangle_{L^2}|^2 \\
&+2\text{Re}\int_0^t\overline{\langle \partial_lu_{m,\varepsilon}(s),f_k\rangle_{L^2}}\langle -i\frac{1}{2\ell}\partial_l\Delta u_{m,\varepsilon}(s),f_k\rangle_{L^2}ds \\
&+2\text{Re}\int_0^t\overline{\langle \partial_lu_{m,\varepsilon}(s),f_k\rangle_{L^2}}\langle -\partial_l(\mu u_m)_{\varepsilon}(s),f_k\rangle_{L^2}ds \\
&+2\text{Re}\int_0^t\overline{\langle \partial_lu_{m,\varepsilon}(s),f_k\rangle_{L^2}}\langle -\lambda i\partial_lg_{m,\varepsilon}(s),f_k\rangle_{L^2}ds \\
&+2\sum_{j=1}^N\text{Re}\int_0^t\overline{\langle \partial_lu_{m,\varepsilon}(s),f_k\rangle_{L^2}}\langle \partial_l(u_m(s)\phi_j)_{\varepsilon},f_k\rangle_{L^2}d\beta_j(s) \\
&+\sum_{j=1}^N\int_0^t|\langle \partial_l(u_m(s)\phi_j)_{\varepsilon},f_k\rangle_{L^2}|^2ds, \quad t<\tau^*(u_0,v_0), \ \Pas
\end{align*}
Taking $k\in\mathbb{N}$ sums, interchanging infinite sums and integrals, we obtain $\Pas$ for all $t\in (0,\tau^*(u_0,v_0))$
\begin{align*}
&||\partial_lu_{m,\varepsilon}(t)||_{L^2}^2=\sum_{k=1}^{\infty}|\langle u_{m,\varepsilon}(t),\partial_lf_k\rangle_{L^2}|^2 \\
=&||\partial_lu_{0,\varepsilon}||^2_{L^2}+2\int_0^t\text{Re}\langle i\frac{1}{2\ell}\Delta u_{m,\varepsilon}(s),\partial_l^2u_{m,\varepsilon}(s)\rangle_{L^2}ds \\
&+2\int_0^t\text{Re}\langle -\partial_l(\mu u_m)_{\varepsilon}(s),\partial_lu_{m,\varepsilon}(s)\rangle_{L^2}ds+\sum_{j=1}^N\int_0^t||\partial_l(u_m(s)\phi_j)_{\varepsilon}||^2_{L^2}ds \\
&-2\lambda\int_0^t\text{Re}\langle i\partial_lg_{m,\varepsilon}(s),\partial_lu_{m,\varepsilon}(s)\rangle_{L^2}ds+2\sum_{j=1}^N\int_0^t\text{Re}\langle \partial_l(u_m(s)\phi_j)_{\varepsilon},\partial_lu_{m,\varepsilon}(s)\rangle_{L^2}d\beta_j(s).
\end{align*}
Finally, summing over $1\le l\le d,$ and by
\begin{align*}
J_{\varepsilon}f&\to f \ \text{in} \ H^k, \\
||J_{\varepsilon}f||_{H^k}&\le||f||_{H^k}, \ k=-1,0,1,
\end{align*}
we can pass to the limit $\varepsilon\to0$ in the above equality and conclude the formula (\ref{nab}).
\end{proof}
\begin{theorem}
\label{th52}
Assume $u_0,v_0\in H^1,\ 1\le d\le 6$ and $\lambda,\kappa$ satisfy (\ref{49}). Let $(u,v,(\tau_{n})_{n\in\mathbb{N}},\tau^*(u_0,v_0))$ be the $H^1$-maximal solution of (\ref{SNLSS}). We have $\Pas$ for $0\le t<\tau^*(u_0,v_0)$
\begin{align}
\label{58re}
E(u,v)=& \ E(u_0,v_0)+\frac{1}{\ell}\int_0^t\text{Re}\langle -\nabla(\mu u)(s),\nabla u(s)\rangle_{L^2}ds+\frac{1}{2\ell}\sum_{j=1}^N\int_0^t||\nabla(u(s)\phi_j)||^2_{L^2}ds \nonumber \\
&+\frac{c}{2L}\int_0^t\text{Re}\langle -\nabla(\mu v)(s),\nabla v(s)\rangle_{L^2}ds+\frac{c}{4L}\sum_{j=1}^N\int_0^t||\nabla(v(s)\phi_j)||^2_{L^2}ds \nonumber \\
&+3\int_0^t\text{Re}\langle \lambda v,\mu u^2\rangle_{L^2}ds-3\sum_{j=1}^N\int_0^t\text{Re}(\overline{\phi_j}\langle \lambda v,u^2\rangle_{L^2})d\beta_j(s) \nonumber \\ 
&+\frac{1}{\ell
}\sum_{j=1}^N\text{Re}\int_0^t\langle\nabla (\phi_ju(s)),\nabla u(s)\rangle_{L^2}d\beta_j(s)+\frac{c}{2L}\sum_{j=1}^N\int_0^t\text{Re}\langle\nabla (\phi_jv(s)),\nabla v(s)\rangle_{L^2}d\beta_j(s).
\end{align}
\end{theorem}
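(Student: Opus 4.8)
The plan is to establish \eqref{58re} first for the Fourier-truncated solution $(u_m,v_m)$ of \eqref{ASNLSS} — for which the cubic functional $P(u,v)=\mathrm{Re}\int_{\mathbb R^d}u^2\overline v\,d\xi$ is differentiable along the flow — and then to remove the truncation by letting $m\to\infty$. For fixed $m$ the gradient part of $E(u_m,v_m)=\tfrac1{2\ell}\|\nabla u_m\|_{L^2}^2+\tfrac c{4L}\|\nabla v_m\|_{L^2}^2+\lambda P(u_m,v_m)$ is already governed by Lemma \ref{lem5} (weighted by $\tfrac1{2\ell}$ and $\tfrac c{4L}$), so the missing ingredient is an It\^o formula for $t\mapsto\lambda P(u_m(t),v_m(t))$. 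I would obtain it by the mollification scheme used in the proofs of Lemma \ref{lem1} and Lemma \ref{lem5}: with $J_\varepsilon=(I-\varepsilon\Delta)^{-1}$ the processes $u_{m,\varepsilon}=J_\varepsilon u_m$, $v_{m,\varepsilon}=J_\varepsilon v_m$ solve genuine $L^2$-valued It\^o equations (cf.\ \eqref{idu} and its $v$-analogue), so the It\^o product rule applies to the real semimartingale $t\mapsto\mathrm{Re}\int u_{m,\varepsilon}^2(t)\overline{v_{m,\varepsilon}(t)}\,d\xi$; localizing on $\sigma_{n,m}=\inf\{t\le\tau_n:\|u_m(t)\|_{H^1}>m\}\wedge\tau_n$ keeps the stochastic integrands square-integrable, so the stochastic Fubini theorem is available as in \eqref{44}.

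The resulting drift splits into four pieces: the \emph{kinetic} part carrying $\Delta u_{m,\varepsilon},\Delta v_{m,\varepsilon}$, which after integration by parts has the shape $\mathrm{Re}\langle i\nabla(\cdot),\nabla(\cdot)\rangle$; the \emph{damping} part, which — since both equations contain the $-\mu(\cdot)$ drift and $P$ is quadratic in $u$, linear in $v$ — equals $-3\,\mathrm{Re}\int\mu\,u_{m,\varepsilon}^2\overline{v_{m,\varepsilon}}\,d\xi$; the \emph{nonlinear interaction} part coming from $\lambda\Theta_m(v_m\overline{u_m})$ and $\kappa\Theta_m(u_m^2)$; and the \emph{It\^o-correction} part, which upon computing the quadratic (co)variations of the noise is expressed through $\sum_j\mu_j^2e_j^2=2\widetilde\mu$ and $\sum_j|\mu_j|^2e_j^2=2\mu$. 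The martingale part carries integrands of the type $\mathrm{Re}\big(\overline{\phi_j}\langle(\cdot),\Theta_m(u_m^2)\rangle\big)$. One then passes to the limit $\varepsilon\to0$ using $\|J_\varepsilon f\|_{H^k}\le\|f\|_{H^k}$ and $J_\varepsilon f\to f$ in $H^k$, $k=-1,0,1$, together with the dominated-convergence argument for the stochastic integrals and the identity $\bigcup_m\{t\le\sigma_{n,m}\}=\{t\le\tau_n\}$, exactly as in \eqref{45}--\eqref{48}, to arrive at a clean It\^o identity for $P(u_m,v_m)$ on $[0,\tau^*(u_0,v_0))$.

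Next I would add Lemma \ref{lem5}'s two gradient identities to $\lambda$ times this $P$-identity and invoke \eqref{49} in the form $c\kappa=\overline\lambda$: this is precisely the relation under which the kinetic part of $\lambda\,dP$ cancels the nonlinear terms $-\tfrac\lambda\ell\int\mathrm{Re}\langle i\nabla\Theta_m(v_m\overline{u_m}),\nabla u_m\rangle$ and $-\tfrac{c\kappa}{2L}\int\mathrm{Re}\langle i\nabla\Theta_m(u_m^2),\nabla v_m\rangle$, i.e.\ the same algebraic cancellation that underlies the deterministic energy conservation of \cite{HOT13} (and that already made the cubic terms cancel in the proof of Lemma \ref{lem1}). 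What survives of $\lambda\,dP$ after this cancellation — the damping plus the It\^o-correction terms — should recombine, via the two sum identities above, into the single drift $3\int_0^t\mathrm{Re}\langle\lambda v_m,\mu\Theta_m(u_m^2)\rangle\,ds$ and the martingale $-3\sum_j\int_0^t\mathrm{Re}\big(\overline{\phi_j}\langle\lambda v_m,\Theta_m(u_m^2)\rangle\big)\,d\beta_j$, so that together with the damping, It\^o-correction and martingale terms inherited directly from Lemma \ref{lem5} one obtains \eqref{58re} for $(u_m,v_m,\Theta_m(v_m\overline{u_m}),\Theta_m(u_m^2))$. Finally, letting $m\to\infty$ and using the stability of the local solution theory (Theorem \ref{mainH}, the Strichartz estimates of Lemma \ref{yzSE}, uniqueness in $\mathcal Y$, and $\Theta_m f\to f$ in every $W^{s,p}$), one has $u_m\to u$, $v_m\to v$ in $\mathcal Y_{\tau_n}$ for each $n$, hence $\Theta_m(v_m\overline{u_m})\to v\overline u$, $\Theta_m(u_m^2)\to u^2$; every deterministic integral converges pathwise and the stochastic integrals converge in probability on $\{t\le\tau_n\}$ by the localization already used, and passing $m\to\infty$, then $n\to\infty$, gives \eqref{58re}.

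The step I expect to be the main obstacle is the bookkeeping in the preceding paragraph: once all integrations by parts are done and the covariation identities $\sum_j\mu_j^2e_j^2=2\widetilde\mu$, $\sum_j|\mu_j|^2e_j^2=2\mu$ have been inserted, one must check that the nonlinear interaction terms cancel \emph{completely} (no residual $\|u\|_{L^4}^4$- or $\||u|^2|v|^2\|_{L^1}$-type drift is allowed in \eqref{58re}) and that what remains collapses precisely to the coefficient-$3$ drift $3\int\mathrm{Re}\langle\lambda v,\mu u^2\rangle$ and the stated martingale. A secondary difficulty is that $u^2\overline v$ sits at borderline $H^1$ regularity, which is exactly why the Fourier truncation \eqref{ASNLSS} and the mollifier $J_\varepsilon$ are indispensable for making the It\^o computation rigorous and why one cannot operate on \eqref{SNLSS} directly.
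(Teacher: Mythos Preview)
Your proposal is correct and follows the same overall architecture as the paper: work with the truncated system \eqref{ASNLSS}, invoke Lemma \ref{lem5} for the two gradient pieces, mollify with $J_\varepsilon$ to justify the It\^o calculus, combine and cancel using $\lambda=c\overline\kappa$, then pass $m\to\infty$ (and $n\to\infty$) via the local theory.

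The one organizational difference is in how the $P$-contribution is obtained. You propose to derive a separate It\^o formula for $t\mapsto\lambda P(u_m,v_m)$ and then add it to the weighted gradient identities, relying on \eqref{49} to cancel the kinetic part of $dP$ against the two nonlinear drifts $-\tfrac{\lambda}{\ell}\int\mathrm{Re}\langle i\nabla\Theta_m(v_m\overline{u_m}),\nabla u_m\rangle$ and $-\tfrac{c\kappa}{2L}\int\mathrm{Re}\langle i\nabla\Theta_m(u_m^2),\nabla v_m\rangle$. The paper instead attacks those two nonlinear drifts directly: integrate by parts to produce $\overline{\Delta u_m}$ and $\overline{\Delta v_m}$, then substitute the equations \eqref{ASNLSS} to replace the Laplacians by $du_m,dv_m$ plus lower-order terms; the resulting ``$\langle\lambda v_m,\partial_t\Theta_m(u_m^2)\rangle+c\langle\kappa\Theta_m(u_m^2),\partial_t v_m\rangle$'' combines (again via \eqref{49}) into a total time derivative that, upon integration, yields $\mathrm{Re}(\lambda\langle v_m(t),\Theta_m(u_m^2)(t)\rangle)-\mathrm{Re}(\lambda\langle v_0,u_0^2\rangle)$, i.e.\ the $P$-term at the endpoints. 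This is the same computation as yours read backwards---your ``kinetic part of $\lambda\,dP$'' is exactly what the paper produces by the substitution---so the cancellations and the surviving coefficient-$3$ drift and martingale are identical. Your route is a bit more systematic (each piece of $E$ gets its own It\^o formula); the paper's is slightly more economical (no separate $dP$ computation) but uses the informal ``$\partial_t$'' notation for what is really a stochastic differential.
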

\begin{proof}
By Lemma \ref{lem5}, we have
\begin{align}
\label{umvm}
&\frac{1}{2\ell}||\nabla u_m(t)||^2_{L^2}+\frac{c}{4L}||\nabla v_m(t)||^2_{L^2} \nonumber \\
=&\frac{1}{2\ell}||\nabla u_0||^2_{L^2}+\frac{c}{4L}||\nabla v_0||^2_{L^2} \nonumber \\
&+\frac{1}{\ell}\int_0^t\text{Re}\langle -\nabla(\mu u_m)(s),\nabla u_m(s)\rangle_{L^2}ds+\frac{c}{2L}\int_0^t\text{Re}\langle -\nabla(\mu v_m)(s),\nabla v_m(s)\rangle_{L^2}ds \nonumber \\
&+\frac{1}{2\ell}\sum_{j=1}^N\int_0^t||\nabla(u_m(s)\phi_j)||^2_{L^2}ds+\frac{c}{4L}\sum_{j=1}^N\int_0^t||\nabla(v_m(s)\phi_j)||^2_{L^2}ds \nonumber \\
&-\frac{\lambda}{\ell} \int_0^t\text{Re}\langle i\nabla \Theta_m(v_m\overline{u_m})(s),\nabla u_m(s)\rangle_{L^2}ds-\frac{c\kappa}{2L} \int_0^t\text{Re}\langle i\nabla \Theta_m(u^2_m)(s),\nabla v_m(s)\rangle_{L^2}ds \nonumber \\
&+\frac{1}{\ell}\sum_{j=1}^N\text{Re}\int_0^t\langle\nabla (\phi_ju_m(s)),\nabla u_m(s)\rangle_{L^2}d\beta_j(s)+\frac{c}{2L}\sum_{j=1}^N\int_0^t\text{Re}\langle\nabla (\phi_jv_m(s)),\nabla v_m(s)\rangle_{L^2}d\beta_j(s).
\end{align}
First, we calculate
\begin{align}
\label{reum}
&-\frac{\lambda}{\ell} \int_0^t\text{Re}\langle i\nabla \Theta_m(v_m\overline{u_m})(s),\nabla u_m(s)\rangle_{L^2}ds=-\frac{\lambda}{\ell}\text{Re}\int\int_0^t\overline{i}\Theta_m(v_m\overline{u_m})\overline{\Delta u_m(s)}dsd\xi \nonumber \\
=& \ -\frac{\lambda}{\ell} \text{Re}\int\int_0^t\overline{i}\Theta_m(v_m\overline{u_m})\left\{ \overline{2\ell idu_m}-\overline{2\ell \lambda \Theta_m(v_m\overline{u_m})}dt+\overline{2\ell i\mu u_m}dt-\overline{2\ell iu_mdW}\right\} d\xi, \nonumber \\
=& \ \int_0^t\text{Re}\langle \lambda v_m,\partial_t\Theta_m(u^2_m)\rangle_{L^2}ds+2\int_0^t\text{Re}\langle \lambda \Theta_m(v_m\overline{u_m}),i\lambda \Theta_m(v_m\overline{u_m})\rangle_{L^2}ds \nonumber \\
&+2\int_0^t\text{Re}\langle \lambda v_m,\mu \Theta_m(u^2_m)\rangle_{L^2}ds-2\sum_{j=1}^N\text{Re}\int_0^t\overline{\phi_j}\langle \lambda v_m,\Theta_m(u^2_m)\rangle_{L^2}d\beta_j(s), \\
\label{revm}
&-\frac{c\kappa}{2L} \int_0^t\text{Re}\langle i\nabla \Theta_m(u^2_m)(s),\nabla v_m(s)\rangle_{L^2}ds=-\frac{c\kappa}{2L}\text{Re}\int\int_0^t\overline{i}\Theta_m(u^2_m)\overline{\Delta v_m(s)}dsd\xi \nonumber \\
=& \ -\frac{c\kappa}{2L} \text{Re}\int\int_0^t\overline{i}\Theta_m(u^2_m)\left\{ \overline{2Lidv_m}-\overline{2L \kappa \Theta_m(u^2_m)}dt+\overline{2Li\mu v_m}dt-\overline{2Liv_mdW}\right\} d\xi \nonumber \\
=& c\int_0^t\text{Re}\langle \kappa \Theta_m(u^2_m),\partial_tv_m\rangle_{L^2}ds+c\int_0^t\text{Re}\langle \kappa \Theta_m(u^2_m),i\kappa \Theta_m(u^2_m)\rangle_{L^2}ds \nonumber \\
&+c\int_0^t\text{Re}\langle \kappa \Theta_m(u^2_m),\mu v_m\rangle_{L^2}ds-c\sum_{j=1}^N\text{Re}\int_0^t\overline{\phi_j}\langle \kappa v_m,\Theta_m(u^2_m)\rangle_{L^2}d\beta_j(s),
\end{align}
where the second term on the right-hand side is zero, respectively.
We consider the first term on the right side of \eqref{reum} and \eqref{revm}. By \eqref{49}, we have
\begin{align*}
&\text{Re}\int_0^t\langle \lambda v_m,\partial_t\Theta_m(u^2_m)\rangle_{L^2}ds+c\text{Re}\int_0^t\langle \kappa \Theta_m(u^2_m),\partial_tv_m\rangle_{L^2}ds \\
=&\text{Re}\int_0^t(\langle \lambda v_m,\partial_t\Theta_m(u^2_m)\rangle_{L^2}+\overline{\langle \lambda\partial_tv_m,\Theta_m(u^2_m)\rangle_{L^2}})ds=\text{Re}\int_0^t\frac{\partial}{\partial t}\lambda \langle v_m,\Theta_m(u^2_m)\rangle_{L^2}ds \\
=&\text{Re}(\lambda \langle v_m(t),\Theta_m(u^2_m)(t)\rangle_{L^2})-\text{Re}(\lambda\langle v_0,u^2_0\rangle_{L^2}).
\end{align*}
Taking the limit $m\to\infty$ in \eqref{umvm}, we have
\begin{align*}
E(u,v)=& \ E(u_0,v_0)+\frac{1}{\ell}\int_0^t\text{Re}\langle -\nabla(\mu u)(s),\nabla u(s)\rangle_{L^2}ds+\frac{c}{2L}\int_0^t\text{Re}\langle -\nabla(\mu v)(s),\nabla v(s)\rangle_{L^2}ds \nonumber \\
&+\frac{1}{2\ell}\sum_{j=1}^N\int_0^t||\nabla(u(s)\phi_j)||^2_{L^2}ds+\frac{c}{4L}\sum_{j=1}^N\int_0^t||\nabla(v(s)\phi_j)||^2_{L^2}ds \nonumber \\
&+2\int_0^t\text{Re}\langle \lambda v,\mu u^2\rangle_{L^2}ds-2\sum_{j=1}^N\text{Re}\int_0^t\overline{\phi_j}\langle \lambda v,u^2\rangle_{L^2}d\beta_j(s), \\
&+c\int_0^t\text{Re}\langle \kappa u^2,\mu v\rangle_{L^2}ds-c\sum_{j=1}^N\text{Re}\int_0^t\overline{\phi_j}\langle \kappa v,u^2\rangle_{L^2}d\beta_j(s), \\
&+\frac{1}{\ell}\sum_{j=1}^N\text{Re}\int_0^t\langle\nabla (\phi_ju(s)),\nabla u(s)\rangle_{L^2}d\beta_j(s)+\frac{c}{2L}\sum_{j=1}^N\int_0^t\text{Re}\langle\nabla (\phi_jv(s)),\nabla v(s)\rangle_{L^2}d\beta_j(s).
\end{align*}
where we note that 
\[ u_m\to u, \quad \nabla\Theta_m(v_m\overline{u_m})\to \nabla (v\overline{u}). \]
Next, we consider the sixth term and the eighth term on the right side. We have
\begin{align*}
2\text{Re}\int_0^t\langle \lambda v,\mu u^2\rangle_{L^2}ds+c\text{Re}\int_0^t\langle \kappa u^2,\mu v\rangle_{L^2}ds=3\int_0^t\text{Re}\langle \lambda v,\mu u^2 \rangle_{L^2}ds.
\end{align*}
Finally, we consider the seventh term and the ninth term on the right side. We have
\begin{align*}
&-2\sum_{j=1}^N\text{Re}\int_0^t\overline{\phi_j}\langle \lambda v,u^2\rangle_{L^2}d\beta_j(s)-c\sum_{j=1}^N\text{Re}\int_0^t\overline{\phi_j}\langle \kappa u^2,v\rangle_{L^2}d\beta_j(s) \\
=&-3\sum_{j=1}^N\int_0^t\text{Re}(\overline{\phi_j}\langle \lambda v,u^2\rangle_{L^2})d\beta_j(s).
\end{align*}
This completes the proof.
\end{proof}
\begin{lem}
\label{lem54}
(\cite[Lemma 3.4]{BRZ16})
Let $Y\ge0$ be real-valued progressively measurable process, we have
\begin{align}
\mathbb{E}\left( \int_0^tY^2(s)ds\right)^{\frac{1}{2}}\le\varepsilon\mathbb{E}\sup_{s\le t}Y(s)+C_{\varepsilon}\int_0^t\mathbb{E}\sup_{r\le s}Y(r)ds.
\end{align}
\end{lem}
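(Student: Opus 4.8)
The plan is to reduce the assertion to an elementary pointwise inequality in $\omega$, take expectations, and close with Tonelli's theorem. First I would observe that for each fixed $\omega$ and each $s\in[0,t]$ one has $Y(s)\le\sup_{r\le t}Y(r)$, hence
\[
\int_0^tY^2(s)\,ds\le\Bigl(\sup_{r\le t}Y(r)\Bigr)\int_0^tY(s)\,ds,
\]
and taking square roots gives $\bigl(\int_0^tY^2(s)\,ds\bigr)^{1/2}\le\bigl(\sup_{r\le t}Y(r)\bigr)^{1/2}\bigl(\int_0^tY(s)\,ds\bigr)^{1/2}$.

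Next I would apply Young's inequality $ab\le\varepsilon a^2+\frac{1}{4\varepsilon}b^2$ with $a=\bigl(\sup_{r\le t}Y(r)\bigr)^{1/2}$ and $b=\bigl(\int_0^tY(s)\,ds\bigr)^{1/2}$ to obtain the pointwise bound
\[
\Bigl(\int_0^tY^2(s)\,ds\Bigr)^{1/2}\le\varepsilon\sup_{r\le t}Y(r)+\frac{1}{4\varepsilon}\int_0^tY(s)\,ds.
\]
Taking expectations on both sides and using that $Y\ge0$ is progressively measurable, Tonelli's theorem gives $\mathbb{E}\int_0^tY(s)\,ds=\int_0^t\mathbb{E}Y(s)\,ds\le\int_0^t\mathbb{E}\sup_{r\le s}Y(r)\,ds$, where the last step is again $Y(s)\le\sup_{r\le s}Y(r)$. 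Combining the two displays yields the claimed inequality with $C_\varepsilon=\frac{1}{4\varepsilon}$.

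I do not expect a genuine obstacle here: the only points requiring a word of care are the joint measurability of $Y$ needed to justify Tonelli (supplied by progressive measurability) and the degenerate case $\mathbb{E}\sup_{r\le t}Y(r)=\infty$, in which the inequality holds trivially. The lemma will then be invoked exactly as in \cite{BRZ16} to absorb a Burkholder--Davis--Gundy term of the form $\mathbb{E}\bigl(\int_0^tY^2(s)\,ds\bigr)^{1/2}$ into a small multiple of $\mathbb{E}\sup_{s\le t}Y(s)$ plus a time integral amenable to Gr\"onwall's inequality, when deriving the a priori bounds for the $H^1$-global solution.
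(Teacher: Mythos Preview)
Your argument is correct: the pointwise factorization $\int_0^t Y^2\,ds\le\bigl(\sup_{r\le t}Y(r)\bigr)\int_0^t Y\,ds$, followed by Young's inequality and Tonelli, yields the claim with $C_\varepsilon=\tfrac{1}{4\varepsilon}$, and you handle the measurability and trivial cases properly.

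The paper itself gives no proof of this lemma; it is simply quoted from \cite[Lemma~3.4]{BRZ16}. Your write-up therefore supplies the (standard, elementary) details that the paper omits by citation, and your closing remark about how the lemma is used downstream---to absorb a BDG term into $\varepsilon\,\mathbb{E}\sup Y$ plus a Gr\"onwall-ready integral---matches exactly the role it plays in the proof of Lemma~\ref{lem56}.
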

To show the global existence of $H^1$-solutions, we additionally assume
\begin{align}
\label{H1add}
\text{Re}\mu_j=0 \ \text{or} \ \text{``} \ c>0 \ \text{and} \ \text{Re}(\mu_j)e_j\le 0, \ \text{for each} \ j \ \text{''}.
\end{align}
\begin{lem}
\label{lem56}
Assume $1\le d\le3, \ \lambda,\kappa$ satisfy (\ref{49}) and \eqref{H1add}. Then, we have
\begin{align}
\label{sup2}
\mathbb{E}\sup_{t\in[0,\tau^*(u_0,v_0))}(\frac{1}{2\ell}||\nabla u(t)||^2_{L^2}+\frac{c}{4L}||\nabla v(t)||^2_{L^2})\le \widetilde{C}(T)<\infty.
\end{align}
\end{lem}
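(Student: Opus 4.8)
The plan is to start from the energy identity (\ref{58re}) of Theorem \ref{th52}, estimate its right-hand side term by term, and close a Gronwall-type inequality for $\mathbb{E}\sup G$, where
\[
G(t):=\frac{1}{2\ell}\|\nabla u(t)\|^2_{L^2}+\frac{c}{4L}\|\nabla v(t)\|^2_{L^2}=K(u,v)(t);
\]
the assumption (\ref{H1add}) will be needed only for the stochastic integrals. First I would regroup the drift part of (\ref{58re}): writing $\phi_j=\mu_je_j$ and recalling $\mu=\tfrac12\sum_j|\mu_j|^2e_j^2$, expanding $\|\nabla(u\phi_j)\|^2_{L^2}$ and $\mathrm{Re}\langle-\nabla(\mu u),\nabla u\rangle_{L^2}$ one sees that the two principal contributions $\pm\tfrac1\ell\int\mu|\nabla u|^2\,d\xi$ cancel, and likewise $\pm\tfrac{c}{2L}\int\mu|\nabla v|^2\,d\xi$; what remains among the $ds$-integrals is bounded by $C\int_0^t(\|u(s)\|^2_{H^1}+\|v(s)\|^2_{H^1})\,ds$ together with the nonlinear drift $3\int_0^t\mathrm{Re}\langle\lambda v,\mu u^2\rangle_{L^2}\,ds$. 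Next I would dispose of the cubic quantities: since $E(u,v)=K(u,v)+\lambda P(u,v)$ with $P(u,v)=\mathrm{Re}\int u^2\overline v\,d\xi$, the Gagliardo--Nirenberg inequality $\|f\|_{L^3}\lesssim\|\nabla f\|_{L^2}^{d/6}\|f\|_{L^2}^{1-d/6}$ (valid for $1\le d\le3$) shows that both $|P(u,v)|$ and $|\langle\lambda v,\mu u^2\rangle_{L^2}|$ are $\lesssim\|\nabla u\|_{L^2}^{d/3}\|\nabla v\|_{L^2}^{d/6}\|u\|_{L^2}^{2-d/3}\|v\|_{L^2}^{1-d/6}$, in which the total power of the gradient norms equals $\tfrac d3+\tfrac d6=\tfrac d2<2$; hence Young's inequality absorbs the gradient factors into $\varepsilon K(u,v)(t)$, respectively into $\varepsilon\int_0^tK(u,v)(s)\,ds$, leaving terms that depend only on $\sup_{s\le t}(\|u(s)\|_{L^2}+\|v(s)\|_{L^2})$, which is a.s.\ finite and bounded in $L^1(\Omega)$ by Lemma \ref{lem2} (and, when $\mathrm{Re}\mu_j=0$, is governed by the exact identity $Q(u,v)=Q(u_0,v_0)$ from (\ref{ucv})). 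Putting this into (\ref{58re}) gives, $\Pas$,
\[
G(t)\le C(\omega,T)+C\int_0^tG(s)\,ds+\sup_{s\le t}|M(s)|,
\]
where $M(t)=\sum_j\int_0^tN_j(s)\,d\beta_j(s)$ collects the $d\beta_j$-terms of (\ref{58re}), with $N_j=-3\,\mathrm{Re}(\overline{\phi_j}\langle\lambda v,u^2\rangle_{L^2})+\tfrac1\ell\mathrm{Re}\langle\nabla(\phi_ju),\nabla u\rangle_{L^2}+\tfrac{c}{2L}\mathrm{Re}\langle\nabla(\phi_jv),\nabla v\rangle_{L^2}$.

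The heart of the argument is the control of $M$, and this is precisely where (\ref{H1add}) enters. Splitting $\mathrm{Re}\langle\nabla(\phi_ju),\nabla u\rangle_{L^2}=\mathrm{Re}(\mu_j)\int e_j|\nabla u|^2\,d\xi+\mathrm{Re}(\mu_j\int\nabla e_j\cdot u\,\overline{\nabla u}\,d\xi)$ (and analogously for $v$), the dangerous term is $\mathrm{Re}(\mu_j)\int e_j(\tfrac1\ell|\nabla u|^2+\tfrac{c}{2L}|\nabla v|^2)\,d\xi=2\,\mathrm{Re}(\mu_j)\int e_j(\tfrac1{2\ell}|\nabla u|^2+\tfrac{c}{4L}|\nabla v|^2)\,d\xi$, whose square is of order $\|\nabla u\|^4_{L^2}$ and cannot be digested by Burkholder--Davis--Gundy without an $L^2_tH^1_x$-bound that is unavailable. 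Under (\ref{H1add}) this term is harmless: if $\mathrm{Re}\mu_j=0$ it vanishes identically; if $c>0$ and $\mathrm{Re}(\mu_j)e_j\le0$ then the density $\tfrac1{2\ell}|\nabla u|^2+\tfrac{c}{4L}|\nabla v|^2$ is nonnegative, so the term is bounded in absolute value by $2|\mu_j|\,\|e_j\|_{L^\infty}G(s)$. In either case, combining with the Gagliardo--Nirenberg bound for the interaction contribution $\mathrm{Re}(\overline{\phi_j}\langle\lambda v,u^2\rangle_{L^2})$ (whose square has gradient power $d\le3$), one obtains
\[
\sum_jN_j(s)^2\lesssim\Big(1+\sup_{r\le t}(\|u(r)\|^2_{L^2}+\|v(r)\|^2_{L^2})\Big)\big(1+G(s)+G(s)^2\big).
\]

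By Burkholder--Davis--Gundy, $\mathbb{E}\sup_{s\le t}|M(s)|\lesssim\mathbb{E}(\int_0^t\sum_jN_j^2\,ds)^{1/2}$. Localizing on the stopping times $\tau_{n,R}:=\inf\{t:\|u(t)\|^2_{L^2}+\|v(t)\|^2_{L^2}>R\}\wedge\tau_n$, the prefactor above is $\le1+R$, and Lemma \ref{lem54} applied to $(\int_0^tG^2\,ds)^{1/2}$ splits it into $\varepsilon\,\mathbb{E}\sup_{s\le t}G(s)+C_\varepsilon\int_0^t\mathbb{E}\sup_{r\le s}G(r)\,ds$. Collecting all the estimates, one arrives at
\[
\mathbb{E}\sup_{s\le t\wedge\tau_{n,R}}G(s)\le C(T,R)+C\int_0^t\mathbb{E}\sup_{r\le s\wedge\tau_{n,R}}G(r)\,ds,
\]
so Gronwall's inequality bounds $\mathbb{E}\sup_{s\le t\wedge\tau_{n,R}}G(s)$, and the $L^2$ estimate of Lemma \ref{lem2} lets one carry the constant through $R\to\infty$ and $n\to\infty$ (Fatou's lemma), which gives (\ref{sup2}).

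The main obstacle is exactly this martingale estimate: because of the quadratic interaction, (\ref{58re}) produces a stochastic integral whose quadratic variation is quadratic in $G$ — the ``major impediment'' mentioned in the introduction — and only the mass-conservation-type assumption (\ref{H1add}) restores the pointwise sign (or the outright cancellation) that, together with Lemma \ref{lem54} and a localization by the $L^2$ norm, makes Burkholder--Davis--Gundy close. Everything else (the drift regrouping, the subcritical Gagliardo--Nirenberg bounds, and the final Gronwall step) is routine once the $L^2$ a priori bound of Lemma \ref{lem2} is available.
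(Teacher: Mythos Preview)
Your overall architecture (energy identity \eqref{58re}, Gagliardo--Nirenberg on the cubic terms, Burkholder--Davis--Gundy combined with Lemma~\ref{lem54}, then Gronwall) matches the paper's, but you have placed assumption \eqref{H1add} in the wrong spot, and this produces a genuine gap at the end.

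In the paper, \eqref{H1add} is \emph{not} used for the martingales $J_5,J_6$. Those are controlled unconditionally: $|\mathrm{Re}\langle\nabla(\phi_ju),\nabla u\rangle_{L^2}|^2\lesssim\|u\|_{L^2}^4+\|\nabla u\|_{L^2}^4$, and Lemma~\ref{lem54} applied with $Y=\|\nabla u\|_{L^2}^2$ turns $(\int_0^t\|\nabla u\|_{L^2}^4\,ds)^{1/2}$ into $\varepsilon\,\mathbb{E}\sup\|\nabla u\|_{L^2}^2+C_\varepsilon\int_0^t\mathbb{E}\sup\|\nabla u\|_{L^2}^2\,ds$. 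Your own remark that Lemma~\ref{lem54} splits $(\int G^2)^{1/2}$ in exactly this way shows the ``dangerous term'' is in fact digestible without any sign hypothesis; and your treatment of the second alternative in \eqref{H1add} is vacuous, since the bound $|\mathrm{Re}(\mu_j)\int e_j(\tfrac1\ell|\nabla u|^2+\tfrac{c}{2L}|\nabla v|^2)\,d\xi|\le 2|\mu_j|\|e_j\|_{L^\infty}G(s)$ holds regardless of the sign of $\mathrm{Re}(\mu_j)e_j$. Where the paper actually invokes \eqref{H1add} is in the mass identity \eqref{ucv}: it forces $Q(u,v)\equiv Q(u_0,v_0)$ pathwise, so that after Gagliardo--Nirenberg and Young the cubic quantity satisfies $|\lambda\langle v,u^2\rangle_{L^2}|\le \varepsilon K(u,v)+C_\varepsilon Q(u_0,v_0)^{\frac{6-d}{4-d}}$, i.e.\ $\varepsilon G$ plus a \emph{deterministic constant}.

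Without that replacement, your Gagliardo--Nirenberg remainder carries a power of $\sup_s Q(u(s),v(s))$ strictly greater than one (the exponent is $\tfrac{6-d}{4-d}$, equal to $3$ when $d=3$), which Lemma~\ref{lem2} alone does not bound in expectation. Your localization at $\tau_{n,R}$ converts this into the deterministic constant $C(T,R)$, and it also puts an $R$-dependent prefactor in front of the BDG term and hence inside the Gronwall integral. Gronwall then yields a bound of the form $C(T,R)\,e^{C(R)T}$, which blows up as $R\to\infty$; Fatou's lemma gives only $\mathbb{E}\sup G\le\liminf_{R}C(T,R)e^{C(R)T}=\infty$. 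The sentence ``the $L^2$ estimate of Lemma~\ref{lem2} lets one carry the constant through $R\to\infty$'' is therefore unjustified: an $L^1(\Omega)$ bound on $\sup Q$ does not make the localized constants uniform in $R$. The remedy is exactly the paper's: use \eqref{H1add} at the level of \eqref{ucv} to replace $Q(u,v)$ by $Q(u_0,v_0)$ in the cubic estimate, so that no localization in the $L^2$ norm is required and the Gronwall constant is independent of any cutoff.
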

\begin{proof}
By Theorem \ref{th52}, for all $n\ge1$ and $t\in[0,T]$, we have $\Pas$
\begin{align*}
&\frac{1}{2\ell}||\nabla u(t\wedge\tau_n)||^2_{L^2}+\frac{c}{4L}||\nabla v(t\wedge\tau_n)||^2_{L^2} \\
=&\frac{1}{2\ell}||\nabla u_0||^2_{L^2}+\frac{c}{4L}||\nabla v_0||^2_{L^2}+\text{Re}(\lambda\langle v(t\wedge\tau_n),u^2(t\wedge\tau_n)\rangle_{L^2})-\text{Re}(\lambda\langle v_0,u_0^2\rangle_{L^2}) \\
&+\frac{1}{\ell}\int_0^{t\wedge\tau_n}\left[ \text{Re}\langle -\nabla(\mu u)(s),\nabla u(s)\rangle_{L^2}+\frac{1}{2}\sum_{j=1}^N||\nabla(u(s)\phi_j)||^2_{L^2}\right] ds \\
&+\frac{c}{2L}\int_0^{t\wedge\tau_n}\left[ \text{Re}\langle -\nabla(\mu v)(s),\nabla v(s)\rangle_{L^2}+\frac{1}{2}\sum_{j=1}^N||\nabla(v(s)\phi_j)||^2_{L^2}\right] ds \\
&+3\int_0^{t\wedge\tau_n}\text{Re}\langle \lambda v,\mu u^2\rangle_{L^2}ds-3\sum_{j=1}^N\int_0^{t\wedge\tau_n}\text{Re}(\overline{\phi_j}\langle \lambda v,u^2\rangle_{L^2})d\beta_j(s) \\ 
&+\frac{1}{\ell
}\sum_{j=1}^N\int_0^{t\wedge\tau_n}\text{Re}\langle\nabla (\phi_ju(s)),\nabla u(s)\rangle_{L^2}d\beta_j(s) \\
&+\frac{c}{2L}\sum_{j=1}^N\int_0^{t\wedge\tau_n}\text{Re}\langle\nabla (\phi_jv(s)),\nabla v(s)\rangle_{L^2}d\beta_j(s) \\
=:&\frac{1}{2\ell}||\nabla u_0||^2_{L^2}+\frac{c}{4L}||\nabla v_0||^2_{L^2}+\text{Re}(\lambda\langle v(t\wedge\tau_n),u^2(t\wedge\tau_n)\rangle_{L^2})-\text{Re}(\lambda\langle v_0,u_0^2\rangle_{L^2}) \\
&+\sum_{k=1}^6J_k(t\wedge\tau_n).
\end{align*}
Estimate the third term on the right-hand side.
By the Gagliardo-Nirenberg inequality
\[ ||\phi||_{L^3}\le C||\nabla \phi||^{\frac{d}{6}}_{L^2}||\phi||^{1-\frac{d}{6}}_{L^2}, \]
we estimate
\begin{align}
\label{inter}
|\lambda\langle v,u^2\rangle_{L^2}|\le&|\lambda|C^3||\nabla u||^{\frac{d}{3}}_{L^2}||u||^{2-\frac{d}{3}}_{L^2}||\nabla v||^{\frac{d}{6}}_{L^2}||v||^{1-\frac{d}{6}}_{L^2} \nonumber \\
\le&|\lambda|C^3(2\ell K(u,v))^{\frac{d}{6}}Q(u,v)^{1-\frac{d}{6}}\left( \frac{4L}{c}K(u,v)\right)^{\frac{d}{12}}\left( \frac{1}{c}Q(u,v)\right)^{\frac{1}{2}-\frac{d}{12}} \nonumber \\
=&|\lambda|C^3(16\ell^2 L)^{\frac{d}{12}}c^{-\frac{1}{2}}K(u,v)^{\frac{d}{4}}Q(u,v)^{\frac{3}{2}-\frac{d}{4}} \nonumber \\
\le&|\lambda|C^3(16\ell^2 L)^{\frac{d}{12}}c^{-\frac{1}{2}}(\varepsilon K(u,v)+C_{\varepsilon}Q(u,v)^{\frac{6-d}{4-d}}) \nonumber \\
=&|\lambda|C^3(16\ell^2 L)^{\frac{d}{12}}c^{-\frac{1}{2}}(\varepsilon K(u,v)+C_{\varepsilon}Q(u_0,v_0)^{\frac{6-d}{4-d}}). 
\end{align}
Here, the last equality using assumptions \eqref{H1add} and \eqref{ucv}. Therefore,
\begin{align}
\label{eq11}
\mathbb{E}\sup_{s\le t\wedge\tau_n}\text{Re}(\lambda\langle v(s),u^2(s)\rangle_{L^2}) \le&|\lambda|\mathbb{E}\sup_{s\le t\wedge\tau_n}|\langle v(s),u^2(s)\rangle_{L^2}| \nonumber \\
\le&C\mathbb{E}\sup_{s\le t\wedge\tau_n}(\varepsilon K(u,v)+C_{\varepsilon}Q(u_0,v_0)^{\frac{6-d}{4-d}}) \nonumber \\
\le&C\varepsilon\mathbb{E}\sup_{s\le t\wedge\tau_n}(\frac{1}{2\ell}||\nabla u(s)||^2_{L^2}+\frac{c}{4L}||\nabla v(s)||^2_{L^2})+\widetilde{C}(T).
\end{align}
If $J_1$ takes $\varepsilon$ sufficiently small from the Lemma \ref{lem2}, then the following holds.
\begin{align}
\mathbb{E}\sup_{s\le t\wedge\tau_n}J_1(s)\le&\frac{C}{\ell}\mathbb{E}\sup_{s\le t\wedge\tau_n}\int_0^s\{ ||\nabla u(r)||^2_{L^2}+||u(r)||^2_{L^2} \} dr \nonumber \\
\le&\frac{C}{\ell}\widetilde{C}(T)t+\frac{C}{\ell}\int_0^t\mathbb{E}\sup_{r\le s\wedge\tau_n}||\nabla u(r)||^2_{L^2}ds. 
\end{align}
Similarly,
\begin{align}
\mathbb{E}\sup_{s\le t\wedge\tau_n}J_2(s)\le\frac{cC}{2L}\widetilde{C}(T)t+\frac{cC}{2L}\int_0^t\mathbb{E}\sup_{r\le s\wedge\tau_n}||\nabla v(r)||^2_{L^2}ds. 
\end{align}
By (\ref{inter}), $J_3$ is estimated as follows.
\begin{align}
\mathbb{E}\sup_{s\le t\wedge\tau_n}|J_3(s)| \le&C\mathbb{E}\sup_{s\le t\wedge\tau_n}\int_0^s(\varepsilon K(u(r),v(r))+C_{\varepsilon}Q(u_0,v_0)^{\frac{6-d}{4-d}})dr \nonumber \\
\le&C\int_0^t\mathbb{E}\sup_{r\le s\wedge\tau_n}(\frac{1}{2\ell}||\nabla u(r)||^2_{L^2}+\frac{c}{4L}||\nabla v(r)||^2_{L^2})ds+\widetilde{C}(T).
\end{align}
To estimate $J_4$, by (\ref{inter}), Burkholder-Davis-Gundy inequality and Lemma \ref{lem54}, we have
\begin{align}
\mathbb{E}\sup_{s\le t\wedge\tau_n}|J_4(s)|\le&C\sum_{j=1}^N\mathbb{E}\sup_{s\le t\wedge\tau_n}\int_0^s(\varepsilon K(u(r),v(r))+C_{\varepsilon}Q(u_0,v_0)^{\frac{6-d}{4-d}})d\beta_j(r) \nonumber \\
\le&C\mathbb{E}\left[ \int_0^{t\wedge\tau_n}(\varepsilon K(u(s),v(s))+C_{\varepsilon}Q(u_0,v_0)^{\frac{6-d}{4-d}})^2ds \right]^{\frac{1}{2}} \nonumber \\
\le&C\varepsilon'\mathbb{E}\sup_{s\le t}(\varepsilon K(u(s),v(s))+C_{\varepsilon}Q(u_0,v_0)^{\frac{6-d}{4-d}}) \nonumber \\
&+C_{\varepsilon'}\int_0^t\mathbb{E}\sup_{r\le s}(\varepsilon K(u(r),v(r))+C_{\varepsilon}Q(u_0,v_0)^{\frac{6-d}{4-d}})ds \nonumber \\
\le&C\varepsilon\mathbb{E}\sup_{s\le t}(\frac{1}{2\ell}||\nabla u(s)||^2_{L^2}+\frac{c}{4L}||\nabla v(s)||^2_{L^2}) \nonumber \\
&+C_{\varepsilon}\int_0^t\mathbb{E}\sup_{r\le s}(\frac{1}{2\ell}||\nabla u(r)||^2_{L^2}+\frac{c}{4L}||\nabla v(r)||^2_{L^2})ds+\widetilde{C}(T).
\end{align}
To estimate $J_5$, by Burkholder-Davis-Gundy inequality, we have
\begin{align*}
\mathbb{E}\sup_{s\le t\wedge\tau_n}|J_5(s)|\le&\frac{C}{\ell}\mathbb{E}\left[ \int_0^{t\wedge\tau_n}(\text{Re}\langle \nabla(\phi_ju(s)),\nabla u(s)\rangle_{L^2})^2ds \right]^{\frac{1}{2}} \nonumber \\
\le&\frac{C}{\ell}\mathbb{E}\left[ \int_0^{t\wedge\tau_n}(||u(s)||^4_{L^2}+||\nabla u(s)||^4_{L^2})ds \right]^{\frac{1}{2}} \\
\le&\frac{C}{\ell}\mathbb{E}\left(\int_0^{t\wedge\tau_n}||u(s)||^4_{L^2}ds\right)^{\frac{1}{2}}+\frac{C}{\ell}\mathbb{E}\left(\int_0^{t\wedge\tau_n}||\nabla u(s)||^4_{L^2}ds\right)^{\frac{1}{2}}.
\end{align*}
Then, if we replace $Y$ in Lemma \ref{lem54} by $||u(s)||^2_{L^2}$ and $||\nabla u(s)||^2_{L^2}$, and using Lemma \ref{lem2} we have
\begin{align}
\mathbb{E}\sup_{s\le t\wedge\tau_n}|J_5(s)|\le&\varepsilon \frac{C}{\ell}\widetilde{C}(T)+\frac{C}{\ell}C_{\varepsilon}\widetilde{C}(T)t+\varepsilon \frac{C}{\ell}\mathbb{E}\sup_{s\le t\wedge\tau_n}||\nabla u(s)||^2_{L^2} \nonumber \\
&+\frac{C}{\ell}C_{\varepsilon}\int_0^t\mathbb{E}\sup_{r\le s\wedge\tau_n}||\nabla u(r)||^2_{L^2}ds.
\end{align}
Similarly,
\begin{align}
\label{eq7}
\mathbb{E}\sup_{s\le t\wedge\tau_n}|J_6(s)|\le&\varepsilon \frac{cC}{2L}\widetilde{C}(T)+\frac{cC}{2L}C_{\varepsilon}\widetilde{C}(T)t+\varepsilon \frac{cC}{2L}\mathbb{E}\sup_{s\le t\wedge\tau_n}||\nabla v(s)||^2_{L^2} \nonumber \\
&+\frac{cC}{2L}C_{\varepsilon}\int_0^t\mathbb{E}\sup_{r\le s\wedge\tau_n}||\nabla v(r)||^2_{L^2}ds.
\end{align}
Thus, summarizing (\ref{eq11})-(\ref{eq7}), we obtain
\begin{align*}
&\mathbb{E}\sup_{s\le t\wedge\tau_n}\left( \frac{1}{2\ell}||\nabla u(s)||^2_{L^2}+\frac{c}{4L}||\nabla v(s)||^2_{L^2} \right) \nonumber \\
\le&C_1(T)+\varepsilon C_2(T)\mathbb{E}\sup_{s\le t\wedge\tau_n}\left(\frac{1}{2\ell}||\nabla u(s)||^2_{L^2}+\frac{c}{4L}||\nabla v(s)||^2_{L^2}\right) \nonumber \\
&+C_3(T)\int_0^t\mathbb{E}\sup_{r\le s\wedge\tau_n}\left(\frac{1}{2\ell}||\nabla u(r)||^2_{L^2}+\frac{c}{4L}||\nabla v(r)||^2_{L^2} \right)ds. 
\end{align*}
Thus, if we take $\varepsilon$ small enough, from Gronwall's lemma, we have
\[ \mathbb{E}\sup_{t\in[0,\tau_n]}\left(\frac{1}{2\ell}||\nabla u(t)||^2_{L^2}+\frac{c}{4L}||\nabla v(t)||^2_{L^2} \right)\le \widetilde{C}(T)<\infty. \]
Taking $n\to \infty$ and applying Fatou's lemma, we obtain (\ref{sup2}), as
claimed.
\end{proof}
\begin{theorem}
\label{th53}
Assume (H1$)_1$, $1\le d\le3, \ \lambda,\kappa$ satisfy \eqref{49} and \eqref{H1add}. Then, for each $u_0,v_0\in H^1$ and $0<T<\infty$, there exists a $H^1$-global solutions $(u,v,T)$ of \textnormal{(\ref{SNLSS})}. It holds $\Pas$ that
\begin{align*}
u,v\in \mathcal{Y}_{T},
\end{align*}
and uniqueness holds in the function space $\mathcal{Y}_T$, where $\mathcal{Y}_T$ is defined as \eqref{solspY}.
Also, the following uniform boundedness holds.
\[ \sup_{0\le t<T} (||u(t)||^2_{H^1}+||v(t)||^2_{H^1})<\infty, \ \Pas \]
\end{theorem}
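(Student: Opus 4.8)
The plan is to mimic the proof of Theorem~\ref{th41}: feed the two a priori bounds already available into the blowup alternative of the $H^1$ local theory. Concretely, I would start from the $H^1$-maximal solution $(u,v,(\tau_n)_{n\in\mathbb{N}},\tau^*(u_0,v_0))$ of \eqref{SNLSS} provided by Theorem~\ref{mainH} (equivalently Theorem~\ref{ymainH} through the rescaling \eqref{21}--\eqref{22}), so that $u,v\in\mathcal{Y}_{\tau_n}$ on each $[0,\tau_n]$, the $\tau_n$ are increasing, and uniqueness holds in each $\mathcal{Y}_{\tau_n}$. Since $1\le d\le 3\le 5$, the blowup alternative stated in Theorem~\ref{mainH} applies: $\Pas$, if $\tau_n<\tau^*(u_0,v_0)$ for every $n$, then $\|u(t)\|_{H^1}+\|v(t)\|_{H^1}\to\infty$ as $t\uparrow\tau^*(u_0,v_0)$. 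Because the $H^1$ solution is in particular an $L^2$ solution and agrees with the $L^2$-maximal one on $[0,\tau^*(u_0,v_0))$, Lemma~\ref{lem2} is applicable along it.

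First I would combine the a priori estimates. Lemma~\ref{lem2} gives
\[
\mathbb{E}\sup_{0\le t<\tau^*(u_0,v_0)}\bigl(\|u(t)\|_{L^2}^2+c\|v(t)\|_{L^2}^2\bigr)\le\widetilde{C}(T)<\infty ,
\]
which (with $c>0$, the regime in which the gradient estimate is derived) controls $\mathbb{E}\sup_{t<\tau^*}\|u(t)\|_{L^2}^2$ and $\mathbb{E}\sup_{t<\tau^*}\|v(t)\|_{L^2}^2$ separately, while Lemma~\ref{lem56} gives
\[
\mathbb{E}\sup_{0\le t<\tau^*(u_0,v_0)}\Bigl(\tfrac{1}{2\ell}\|\nabla u(t)\|_{L^2}^2+\tfrac{c}{4L}\|\nabla v(t)\|_{L^2}^2\Bigr)\le\widetilde{C}(T)<\infty .
\]
Adding these and relabelling constants yields
\[
\mathbb{E}\sup_{0\le t<\tau^*(u_0,v_0)}\bigl(\|u(t)\|_{H^1}^2+\|v(t)\|_{H^1}^2\bigr)\le\widetilde{C}(T)<\infty ,
\]
hence $\sup_{0\le t<\tau^*(u_0,v_0)}(\|u(t)\|_{H^1}^2+\|v(t)\|_{H^1}^2)<\infty$, $\Pas$

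Then I would run the bootstrap exactly as in Theorem~\ref{th41}: on the $\Pas$ event where this supremum is finite, the blowup alternative forbids $\tau_n<\tau^*(u_0,v_0)$ from holding for all $n$, so $\tau_n=T$ for some $n$ and therefore $\tau^*(u_0,v_0)=T$, $\Pas$ Consequently $(u,v,T)$ is an $H^1$-global solution; the membership $u,v\in\mathcal{Y}_T$ (with $\mathcal{Y}_T$ as in \eqref{solspY}) and uniqueness in $\mathcal{Y}_T$ are inherited from Theorem~\ref{mainH} by restricting to the pieces $[0,\tau_n]$ and gluing; and the displayed uniform boundedness $\sup_{0\le t<T}(\|u(t)\|_{H^1}^2+\|v(t)\|_{H^1}^2)<\infty$, $\Pas$, is precisely the almost sure bound obtained in the previous paragraph.

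The genuinely hard part of this result is not in the present deduction but upstream, in Lemma~\ref{lem56} (and the energy identity of Theorem~\ref{th52}): the interaction contribution $\mathrm{Re}\langle\lambda v,u^2\rangle$, which has no counterpart in the scalar equation, must be bounded by Gagliardo--Nirenberg interpolation and then \emph{absorbed} into $K(u,v)$ using the $L^2$ bound of Lemma~\ref{lem2} together with \eqref{49}, and it is exactly this absorption that forces the sign condition \eqref{H1add}. Granting Lemma~\ref{lem56}, the only place in the proof of Theorem~\ref{th53} that needs a little care is verifying that the blowup-alternative dichotomy transfers verbatim from $L^2$ to $H^1$ and that the local solutions on the stopping-time pieces assemble into a genuine element of $\mathcal{Y}_T$; both are routine consequences of the local theory of Section~3.
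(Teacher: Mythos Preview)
Your proposal is correct and follows essentially the same route as the paper: take the $H^1$-maximal solution from Theorem~\ref{mainH}, combine the $L^2$ bound of Lemma~\ref{lem2} with the gradient bound of Lemma~\ref{lem56} to obtain a finite $\mathbb{E}\sup_{t<\tau^*}(\|u\|_{H^1}^2+\|v\|_{H^1}^2)$, and then invoke the blowup alternative to force $\tau^*(u_0,v_0)=T$. Your additional remarks on the role of $c>0$, on gluing the pieces into $\mathcal{Y}_T$, and on the real difficulty residing in Lemma~\ref{lem56} are accurate commentary but not part of the paper's (very short) proof.
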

\begin{proof}
Let $(u,v,(\tau_n)_{n\in\mathbb{N}},\tau^*(u_0,v_0))$ be the $H^1$-local solution of (\ref{SNLSS}). Then, by Lemma \ref{lem2} and Lemma \ref{lem56}, we have
\[ \mathbb{E} \sup_{0\le t<\tau^*(u_0,v_0)} (||u(t)||^2_{H^1}+||v(t)||^2_{H^1})\le \widetilde{C}(T)<\infty, \]
which implies
\[ \sup_{0\le t<\tau^*(u_0,v_0)} (||u(t)||^2_{H^1}+||v(t)||^2_{H^1})<\infty, \ \Pas \]
Then it follows from the blowup alternative in Theorem \ref{mainH} that $\tau^*(u_0,v_0)=T, \ \Pas$.
\end{proof}

We can remove the assumption about the noise coefficients $\mu_j$ for the existence of $H^1$-global solutions by applying the argument of the persistence of regularity. However, in this case, uniform boundedness cannot be obtained.
\begin{theorem}
\label{th541}
Assume (H1$)_1$, $1\le d\le3,$ and $\lambda,\kappa$ satisfy (\ref{49}). Then, for each $u_0,v_0\in H^1$ and $0<T<\infty$, there exists a $H^1$-global solution $(u,v,T)$ of \textnormal{(\ref{SNLSS})}. It holds $\Pas$ that
\begin{align*}
u,v\in \mathcal{Y}_{T},
\end{align*}
and uniqueness holds in the function space $\mathcal{Y}_T$.
\end{theorem}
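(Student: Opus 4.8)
The strategy is to deduce global $H^1$ existence from the already-established $L^2$ global theory by a \emph{pathwise} persistence-of-regularity argument, carried out on the rescaled system \eqref{RSNLSS} so that no stochastic integral is present. First I would invoke Theorem \ref{th41}: since $1\le d\le 3$, $H^1\hookrightarrow L^2$ and \eqref{49} holds, there is an $L^2$-global solution $(u,v,T)$ of \eqref{SNLSS} with $u,v\in\mathcal X_T$, unique in $\mathcal X_T$; because $\mathcal X_T\subset (C\cap L^\infty)([0,T];L^2)$, this already provides a pathwise bound $R(\omega):=\|u\|_{L^\infty(0,T;L^2)}+\|v\|_{L^\infty(0,T;L^2)}<\infty$ for $\Pas\ \omega$. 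Separately, Theorem \ref{mainH} furnishes an $H^1$-maximal solution $(u,v,(\tau_n)_{n\in\mathbb N},\tau^*(u_0,v_0))$, and since $\mathcal Y_t\subset\mathcal X_t$ it must coincide with the $L^2$-solution above on $[0,\tau^*(u_0,v_0))$; the goal is therefore $\tau^*(u_0,v_0)=T$, $\Pas$ By the blowup alternative of Theorem \ref{mainH} (on $\{\tau^*<T\}$ maximality of $\tau^*$ forces $\tau_n<\tau^*$ for all $n$, so the alternative applies), it suffices to show
\[
\sup_{0\le t<\tau^*(u_0,v_0)}\big(\|u(t)\|_{H^1}+\|v(t)\|_{H^1}\big)<\infty,\qquad\Pas
\]

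Next I would pass to the rescaled variables $y:=e^{-W}u$, $z:=e^{-W}v$, which by Theorem \ref{uvyz} solve \eqref{RSNLSS} on $[0,\tau^*(u_0,v_0))$ in both $H^0$ and $H^1$. Fixing a generic $\omega$ with $R(\omega)<\infty$ and with $t\mapsto W(t,\cdot)$ continuous into $C^1_b(\mathbb R^d)$ — recall $W(t)=\sum_{j=1}^N\mu_je_j\beta_j(t)$ with $e_j\in C^\infty_b$ — the weights $e^{\pm W(t)}$ are bounded in $W^{1,\infty}$ uniformly on $[0,T]$, so $\widetilde R(\omega):=\sup_{0\le t<\tau^*}(\|y(t)\|_{L^2}+\|z(t)\|_{L^2})<\infty$ and the displayed $H^1$-bound is equivalent to the corresponding one for $(y,z)$. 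The crucial observation is that, for this fixed $\omega$, \eqref{RSNLSS} is a PDE with pathwise-bounded time-dependent lower-order coefficients and \emph{no} It\^o term, so the $H^1$ Strichartz estimates of Lemma \ref{yzSE} apply on $[0,T]$ with a finite constant $C_T=C_T(\omega)$ — this is where (H1$)_1$ enters. On a short interval $I=[a,a+\delta]\subset[0,\tau^*)$, writing $\|\cdot\|_{\mathcal Y(I)}$ for the $\mathcal Y$-norm over $I$ and starting the mild formulation \eqref{mild1}--\eqref{mild2} at time $a$, I would estimate the Duhamel terms for $\nabla y,\nabla z$ exactly as in the proof of Theorem \ref{ymainH}, with the one essential twist that, after distributing the gradient over the quadratic nonlinearities $\overline{e^W}z\overline y$ and $e^W y^2$, each product is split by H\"older's inequality into a factor carrying the top-order derivative, measured in the $H^1$-Strichartz norm, and a factor measured in $L^\infty(I;L^2)$ and hence bounded by $\widetilde R(\omega)$. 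Using $d\le 3$ and $H^1\hookrightarrow L^6$, this produces
\[
\|y\|_{\mathcal Y(I)}+\|z\|_{\mathcal Y(I)}\le C_T(\omega)\big(\|y(a)\|_{H^1}+\|z(a)\|_{H^1}\big)+C_T(\omega)\,\delta^{\theta}\,\widetilde R(\omega)\,\big(\|y\|_{\mathcal Y(I)}+\|z\|_{\mathcal Y(I)}\big)
\]
for some $\theta=\theta(d)>0$ (e.g.\ $\theta=\tfrac14$ for $d=3$), all $\omega$-dependent prefactors from $\|e^{\pm W}\|_{W^{1,\infty}}$, $|\lambda|$ and $|\kappa|$ being absorbed into $C_T(\omega)$. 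The point is that this estimate is \emph{linear} in $\|y\|_{\mathcal Y(I)}+\|z\|_{\mathcal Y(I)}$ with a coefficient small for small $\delta$.

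To finish, choose $\delta=\delta(\omega)>0$ with $2C_T(\omega)\delta^{\theta}\widetilde R(\omega)\le\tfrac12$; absorbing and using $\mathcal Y(I)\hookrightarrow C(\overline I;H^1)$ gives $\|y(a+\delta)\|_{H^1}+\|z(a+\delta)\|_{H^1}\le 2C_T(\omega)(\|y(a)\|_{H^1}+\|z(a)\|_{H^1})$. Iterating over the partition of $[0,\tau^*(u_0,v_0))$ into at most $K(\omega):=\lceil T/\delta(\omega)\rceil<\infty$ intervals of length $\delta(\omega)$ yields $\sup_{0\le t<\tau^*}(\|y(t)\|_{H^1}+\|z(t)\|_{H^1})\le(2C_T(\omega))^{K(\omega)}(\|u_0\|_{H^1}+\|v_0\|_{H^1})<\infty$; translating back to $(u,v)$ gives the displayed $H^1$-bound, which contradicts the blowup alternative on $\{\tau^*(u_0,v_0)<T\}$. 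Hence $\tau^*(u_0,v_0)=T$, $\Pas$, producing an $H^1$-global solution $(u,v,T)$ with $u,v\in\mathcal Y_T$; uniqueness in $\mathcal Y_T$ follows from the local uniqueness of Theorem \ref{mainH} by the same connectedness argument as in the proof of Theorem \ref{yzmain}.

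The main obstacle is precisely the quadratic interaction, which in a naive contraction scheme only gives local existence: the argument succeeds only because the $L^2$-norm, though merely finite \emph{pathwise} and not in any $L^p(\Omega)$, is nonetheless globally finite in time and can be used to linearize the short-interval $H^1$-estimate — the price being a bound of the form $\mathrm{const}^{\,T/\delta(\omega)}$, which explains why, in contrast to Theorem \ref{th53}, no uniform (in $\omega$) $H^1$-bound is asserted here. A secondary technical point is the handling of the multiplicative weights $e^{\pm W}$ and the commutator term $\nabla(e^W\cdot)=e^W\nabla W\,(\cdot)+e^W\nabla(\cdot)$, which must be controlled using only the pathwise $W^{1,\infty}$-bounds on $[0,T]$ and checked to be uniform across the finitely many subintervals.
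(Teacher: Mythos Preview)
Your proposal is correct and follows essentially the same route as the paper: a pathwise persistence-of-regularity argument on the rescaled system \eqref{RSNLSS}, in which the quadratic nonlinearity in the short-interval $H^1$-Strichartz estimate is linearized by splitting off an $L^\infty_tL^2_x$ factor that is finite by the $L^2$-global theory (Theorem \ref{th41}). The only cosmetic difference is the endgame: you iterate the short-interval bound over $[0,\tau^*)$ and invoke the blowup alternative of Theorem \ref{mainH}, whereas the paper runs a single contraction-map argument on an interval $[\tau^*-\varepsilon,\tau^*+\varepsilon]$ straddling $\tau^*$ to directly contradict maximality.
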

By the equivalence of two expressions of solutions via the rescaling transformations (\ref{21}) and (\ref{22}),
Theorem \ref{th541} is rewritten as Theorem \ref{th551}.
\begin{theorem}
\label{th551}
Assume (H1$)_1$, $1\le d\le3,$ and $\lambda,\kappa$ satisfy (\ref{49}). Then, for each $u_0,v_0\in H^1$ and $0<T<\infty$, there exists a $H^1$-global solution $(y,z,T)$ of \textnormal{(\ref{RSNLSS})}. It holds $\Pas$ that
\begin{align*}
y,z\in \mathcal{Y}_{T},
\end{align*}
and uniqueness holds in the function space $\mathcal{Y}_T$.
\end{theorem}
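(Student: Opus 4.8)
The plan is to obtain the $H^1$-global solution of \textnormal{(\ref{RSNLSS})} by a persistence-of-regularity argument, using two facts already available for $1\le d\le 3$: the $L^2$-global theory (Theorem \ref{th41}, transported to \textnormal{(\ref{RSNLSS})} through Theorem \ref{uvyz}) and the $H^1$-local theory with its blow-up alternative (Theorem \ref{ymainH}). The reason to argue on the rescaled system \textnormal{(\ref{RSNLSS})} rather than on \textnormal{(\ref{SNLSS})} is that \textnormal{(\ref{RSNLSS})} carries no stochastic integral, so for $\Pas$ $\omega$ it is a deterministic PDE; this is exactly what makes a pathwise a-priori bound possible and what lets one dispense with \eqref{H1add}. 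Concretely: since $u_0,v_0\in H^1\subset L^2$ and \eqref{49} holds, Theorems \ref{th41} and \ref{uvyz} produce an $L^2$-global solution $(\widetilde y,\widetilde z,T)$ of \textnormal{(\ref{RSNLSS})} with $\widetilde y,\widetilde z\in\mathcal{X}_T$, so that for $\Pas$ $\omega$ the constant $S:=||\widetilde y||_{\mathcal{X}_T}+||\widetilde z||_{\mathcal{X}_T}$ is finite (in particular $\sup_{0\le t<T}(||\widetilde y(t)||_{L^2}+||\widetilde z(t)||_{L^2})\le S$), while Theorem \ref{ymainH} furnishes an $H^1$-maximal solution $(y,z,(\tau_n)_n,\tau^*(u_0,v_0))$ of \textnormal{(\ref{RSNLSS})}. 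Since $\mathcal{Y}_t\subset\mathcal{X}_t$, this $H^1$-solution is also an $L^2$-solution, so uniqueness in $\mathcal{X}_t$ (Theorem \ref{yzmain}) forces $(y,z)=(\widetilde y,\widetilde z)$ on $[0,\tau^*(u_0,v_0))$; in particular $\sup_{0\le t<\tau^*(u_0,v_0)}(||y(t)||_{L^2}+||z(t)||_{L^2})\le S$, $\Pas$

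The heart of the proof is to upgrade this $L^2$-bound to a pathwise $H^1$-bound on $[0,\tau^*(u_0,v_0))$. Fix $\omega$ off a null set and let $C=C(\omega)$ be a constant large enough to accommodate the (increasing) Strichartz constant $C_T$ of Lemma \ref{yzSE} with $s=1$, the $L^\infty$-bounds over $[0,T]\times\mathbb{R}^d$ of $e^{\pm W}$ and their $\xi$-derivatives (finite by (H1$)_1$), and the relevant powers of $S$. Choose a partition $0=a_0<a_1<\dots<a_K=T$ of mesh $\delta=\delta(\omega)$, to be fixed below. On $[a_k,b]$ with $b:=a_{k+1}\wedge\tau^*(u_0,v_0)$ write the mild equations based at $a_k$, namely $y(t)=U_{\ell}(t,a_k)y(a_k)-i\lambda\int_{a_k}^tU_{\ell}(t,s)(\overline{e^{W(s)}}z(s)\overline{y(s)})\,ds$ and its $z$-analogue, and apply the $H^1$-Strichartz estimate of Lemma \ref{yzSE}. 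The crucial input is the nonlinear bound obtained from the product rule: $\xi$-derivatives falling on the coefficient $\overline{e^{W}}$ cost nothing by (H1$)_1$; the terms with no derivative on $z$ or $y$ are $\lesssim\delta^{\theta}$ times a product of $\mathcal{X}$-admissible norms; and the two terms carrying a derivative on $z$ or on $y$ split — by the same H\"older estimates as in the local theory of Theorem \ref{ymainH} — into $\delta^{\theta}$ times (a $\mathcal{Y}(a_k,b)$-norm of the differentiated unknown) times (an $\mathcal{X}$-admissible norm of the other), where $\theta=\theta(d)>0$. Since every $\mathcal{X}$-admissible norm is $\le S$, running this for both the $y$- and the $z$-equation (nonlinearity $e^{W}y^2$) and adding, with $\mathcal{N}(a_k,b):=||y||_{\mathcal{Y}(a_k,b)}+||z||_{\mathcal{Y}(a_k,b)}$, gives
\[ \mathcal{N}(a_k,b)\le C\big(||y(a_k)||_{H^1}+||z(a_k)||_{H^1}+1\big)+C\,\delta^{\theta}\,\mathcal{N}(a_k,b). \]
Fixing $\delta$ with $C\delta^{\theta}\le\tfrac12$ and absorbing, $\mathcal{N}(a_k,b)\le 2C(||y(a_k)||_{H^1}+||z(a_k)||_{H^1}+1)$; iterating over $k=0,\dots,K-1$, each step starting from the $H^1$-bound at $a_k$ delivered by the previous one, bounds $\sup_{0\le t<\tau^*(u_0,v_0)(\omega)}(||y(t)||_{H^1}+||z(t)||_{H^1})$ for $\Pas$ $\omega$. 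By the blow-up alternative of Theorem \ref{ymainH} this forces $\tau^*(u_0,v_0)=T$, $\Pas$; hence $(y,z,T)$ is an $H^1$-global solution with $y,z\in\mathcal{Y}_T$, and uniqueness in $\mathcal{Y}_T$ follows from the local uniqueness by the standard continuation argument.

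The main obstacle is the nonlinear estimate just used: one must check, separately for $d=1,2,3$, that $\overline{e^{W}}z\overline y$ and $e^{W}y^2$ can be controlled in the dual $H^1$-Strichartz space $L^{q_2'}(0,t;W^{1,p_2'})$ with the derivative placed on one factor (measured in the full $\mathcal{Y}$-norm) and the other factor downgraded to an $\mathcal{X}$-admissible ($L^2$-level) norm — that is, that the bilinear H\"older splitting from the local theory of Theorem \ref{ymainH} still closes when one slot is weakened from $\mathcal{Y}$ to $\mathcal{X}$; the favourable time factor $\delta^{\theta}$ is what supplies the smallness needed for the absorption. A secondary, purely organizational point is that $\delta$, $K$ and $C$ all depend on $\omega$, so the partition and the iteration are carried out pathwise for $\Pas$ $\omega$, which is legitimate precisely because \textnormal{(\ref{RSNLSS})} is, for each $\omega$, a deterministic equation and only an almost-sure conclusion is required.
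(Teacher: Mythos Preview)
Your proposal is correct and follows essentially the same persistence-of-regularity approach as the paper: both exploit the global $L^2$-bound from Theorem \ref{th41} (transported via Theorem \ref{uvyz}) to downgrade one factor in the $H^1$-Strichartz estimate of the nonlinearity $\overline{e^{W}}z\overline y$, $e^{W}y^2$ to an $\mathcal{X}$-level norm, gaining the small factor $\delta^{\theta}$ that permits absorption. The paper organizes the endgame as a contraction-mapping argument on a short interval straddling $\tau^*(u_0,v_0)$ (contradicting maximality), whereas you iterate an a-priori bound over a partition and invoke the blow-up alternative of Theorem \ref{ymainH}; the core nonlinear estimate and the overall mechanism are the same.
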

\begin{proof}
From Theorem \ref{ymainH}, for any $u_0,v_0\in H^1$ and $0<T<\infty$, there exists a $H^1$-local solution $(y,z,\tau^*(u_0,v_0))$ where $\tau^*(u_0,v_0)$ is the maximum existence time of $(y,z)$. Assume that $\tau^*(u_0,v_0)<T$. Then, there exists $\varepsilon$ such that $\tau^*(u_0,v_0)-\varepsilon<\tau^*(u_0,v_0)+\varepsilon<T$. Set $I=[\tau^*(u_0,v_0)-\varepsilon,\tau^*(u_0,v_0)+\varepsilon]$. Fix $\omega\in\Omega$. We estimate the following map.
\begin{align*}
\mathcal{T}(y,z)(t)=&\left( \Phi(y,z),\Psi(y,z)\right) \\
:=&\left( U_{\ell}(t,0)u_0-i\int_0^tU_{\ell}(t,s)(\lambda\overline{e^{W(s)}}z(s)\overline{y(s)})ds, \right. \\
&\quad\left. U_L(t,0)v_0-i\int_0^tU_L(t,s)(\kappa e^{W(s)}y^2(s))ds \right).
\end{align*}
First, we consider the case $d=1$. Then, the space $\mathcal{Y}_{I}$ is $(C\cap L^{\infty})(I;H^1)\cap L^4(I;W^{1,\infty})$. We estimate $\Phi(y,z)$ and $\Psi(y,z)$ as
\begin{align*}
||\Phi(y,z)||_{\mathcal{Y}_{I}}\le & \  ||y(\tau^*(u_0,v_0)-\varepsilon)||_{H^1}+||z\overline{y}||_{L^1(I;H^1)}, \\
||\Psi(y,z)||_{\mathcal{Y}_{I}}\le & \  ||z(\tau^*(u_0,v_0)-\varepsilon)||_{H^1}+||y^2||_{L^1(I;H^1)},
\end{align*}
where
\begin{align*}
||z\overline{y}||_{L^1(I;H^1)}\lesssim& \ ||z\overline{y}||_{L^1(I;L^2)}+|||\nabla z||\overline{y}|||_{L^1(I;L^2)}+|||z||\nabla\overline{y}|||_{L^1(I;L^2)} \\
\lesssim& \ (2\varepsilon)^{\frac{3}{4}}||z||_{L^4(I;L^{\infty})}||y||_{L^{\infty}(I;L^2)}+(2\varepsilon)^{\frac{3}{4}}||\nabla z||_{L^4(I;L^{\infty})}||y||_{L^{\infty}(I;L^2)} \\
&+(2\varepsilon)^{\frac{3}{4}}||\nabla y||_{L^4(I;L^{\infty})}||z||_{L^{\infty}(I;L^2)} \\
\lesssim& \ (2\varepsilon)^{\frac{3}{4}}||z||_{L^4(I;W^{1,\infty})}||y||_{L^{\infty}(I;L^2)}+(2\varepsilon)^{\frac{3}{4}}||z||_{L^4(I;W^{1,\infty})}||y||_{L^{\infty}(I;L^2)} \\
&+(2\varepsilon)^{\frac{3}{4}}||y||_{L^4(I;W^{1,\infty})}||z||_{L^{\infty}(I;L^2)}.
\end{align*}
From Theorem 4.1, since $||y||_{L^{\infty}(I;L^2)},||z||_{L^{\infty}(I;L^2)}<\infty$, so $\Phi$ is bounded. In the same way, $\Psi$ is bounded. Differences are estimated as follows.
\begin{align*}
&||\Phi(y,z)-\Phi(y',z')||_{\mathcal{Y}_{I}}\lesssim  (2\varepsilon)^{\frac{3}{4}}C(T)(||y-y'||_{L^4(I;W^{1,\infty})}+||z-z'||_{L^4(I;W^{1,\infty})}), \\
&||\Psi(y,z)-\Psi(y',z')||_{\mathcal{Y}_{I}}\lesssim  (2\varepsilon)^{\frac{3}{4}}C(T)(||y-y'||_{L^4(I;W^{1,\infty})}+||z-z'||_{L^4(I;W^{1,\infty})}).
\end{align*}
Thus, if $\varepsilon$ is sufficiently small, $\Phi$ and $\Psi$ are contraction maps. Therefore, it contradicts the definition of the maximal existence time $\tau^*(u_0,v_0)$. The case of $d=2,3$ can be shown in the same way.
\end{proof}

\section*{Acknowledgement}
This work was supported by JSPS KAKENHI No. JP22J00787 (for M.H.), No. JP19H00644 and No. JP20K03671 (for S.M.).

\end{document}